\documentclass[a4paper, 12pt]{article}
\usepackage[utf8]{inputenc}
\usepackage[top=1.6in, bottom=1.6in, left=1in, right=1in]{geometry}
\usepackage{amsmath, amssymb, amsthm}
\usepackage{mathrsfs}
\usepackage{enumerate}
\usepackage{hyperref}
\usepackage{color} 
\usepackage{float}
\usepackage{tikz}

\newcommand{\nn}{\mathbb{N}}
\newcommand{\zn}{\mathbb{Z}}

\newcommand{\cn}{\mathbb{C}}
\newcommand{\fk}{\mathbb{K}}

\newcommand{\hs}{\mathcal{H}}

\newcommand{\ns}{\mathcal{X}}
\newcommand{\bb}{\boldsymbol{b}}
\newcommand{\be}{\boldsymbol{e}}
\newcommand{\bh}{\boldsymbol{h}}

\newcommand{\bv}{\boldsymbol{v}}
\newcommand{\bx}{\boldsymbol{x}}
\newcommand{\by}{\boldsymbol{y}}
\newcommand{\bz}{\boldsymbol{z}}
\newcommand{\ze}{\boldsymbol{0}}
\newcommand{\norm}[1]{\left\lVert #1 \right\rVert}

\newtheorem{definition}{Definition}[section]
\newtheorem{proposition}{Proposition}[section]
\newtheorem{corollary}[proposition]{Corollary}
\newtheorem{theorem}[proposition]{Theorem}
\newtheorem{lemma}[proposition]{Lemma}
\newtheorem{remark}[proposition]{Remark}

\theoremstyle{definition}
\newtheorem{example}{Example}[section]

\numberwithin{equation}{section}

\DeclareMathOperator{\spn}{span}

\title{{\bf Asymptotic Expansive and Hyperbolic Linear Operators}}   
\author{Priyabrata Bag$^a$, Pabitra Narayan Mandal$^b$, Pramod Kumar Das$^{a,*}$\\
\footnotesize $^a$ Department of Mathematics and Statistics, School of Science,\\
\footnotesize Gandhi Institute of Technology and Management (Deemed to be University),\\
\footnotesize Doddaballapur Taluk, Bengaluru, Karnataka 561203, India\\
\footnotesize $^b$ Centre for Data Science, Institute of Technical Education and Research,\\
\footnotesize Siksha `O’ Anusandhan (Deemed to be University), Bhubaneswar 751030, India\\
\footnotesize
priyabrata.bag@gmail.com; pabitranarayanm@gmail.com; pramod.math.ju@gmail.com}   
\date{}

\begin{document}
\maketitle

\begin{abstract}
\noindent We study the topological dynamical notion of asymptotic expansivity for linear operators. We also introduce two new notions, namely strong asymptotic expansivity and super asymptotic expansivity. We show that all three notions are equivalent to hyperbolicity for linear operators on finite dimensional spaces. In case of infinite-dimension, we construct examples to show that asymptotic expansivity is weaker than strong asymptotic expansivity which is further weaker than super asymptotic expansivity. More interestingly, we prove that super asymptotic expansivity is equivalent to the notion of hyperbolicity. Then, we prove that the super asymptotically expansive linear operators is a dense class in strong operator topology. Finally, we show elegant ways to construct variants of asymptotically expansive linear operators.       
\mbox{}\\

\noindent{\bf Keywords:} Linear Operator, Spectrum, Hyperbolicity, Expansivity, Asymptotic Expansivity 
\smallskip

\noindent{\bf Mathematics Subject Classification (2020):} 37B05, 47A99, 47A10  
\end{abstract} 

\section{Introduction} 

The notion of expansivity has been extensively studied in topological dynamics (see \cite{AH}) since its inception (see \cite{U}) in 1950. During the early exploration of this notion in the     

------------------------------

$^*$Corresponding author 

E-mail address: pramod.math.ju@gmail.com (P. K. Das)

\noindent area of nonlinear dynamics, Eisenberg and Hedlund have studied this notion for linear operators on Banach spaces (see \cite{E}, \cite{EH}, \cite{H}). Since the notion of expansivity played important role in understanding stable and chaotic dynamical systems, many researchers have introduced and investigated several variants such as positive expansivity (see \cite{AH}), measure expansivity (see \cite{M}) in nonlinear dynamics. Almost half a century after publication of Eisenberg and Hedlund's work (see \cite{E}, \cite{EH}, \cite{H}), recently, many authors again have focused on exploring the behavior of expansive, positively expansive, measure expansive linear operators on Banach spaces (see \cite{BCDMP}, \cite{BDOP}, \cite{LM}, \cite{M2000}).      
\medskip

In \cite{E}, Eisenberg has proved that an invertible linear operator on $\mathbb{C}^n$ is expansive if and only if it is hyperbolic, that is, the spectrum does not intersect the unit circle. In \cite{BCDMP}, Bernardes Jr. et al. have proved that an invertible linear operator on a Banach space is uniformly positively expansive if and only if the approximate point spectrum does not intersect the closed unit disk. In \cite{BDOP}, P. Bag et al. have proved that any linear operator on finite dimensional Hilbert space having an eigenvalue not lying on the unit circle is measure-expansive. 
\medskip

Recently, a new variant of expansivity called asymptotic expansivity has been introduced and investigated thoroughly by Lee et al. for continuous maps on compact metric spaces (see \cite{LMV}). In \cite{M2025}, Morales have introduced asymptotic expansivity for linear operators and have established connections with expansivity, uniform expansivity, and hyperbolicity. To the best of our knowledge, a thorough investigation of this notion for linear operators is due since then. Our purpose here is to present a comprehensive study on asymptotically expansive linear operators and its variants. 
\medskip

Another important class of linear operators are hyperbolic ones (see \cite{B},\cite{LM2022},\cite{CGP},\cite{M2000}). It is interesting to note that the class of hyperbolic linear operators is open and small perturbations of such operators does not change the dynamics (see \cite{H},\cite{P},\cite{P1969}).  
\medskip

We prove that a hyperbolic operator is asymptotically expansive but the converse is not true. This encourages us to introduce a stronger variant called strong asymptotic expansivity which is weaker than hyperbolicity. This motivates us in strengthening the variant further and introduce super asymptotic expansivity which is equivalent to hyperbolicity. Interestingly, these three types of asymptotic expansivity are proven to be equivalent to hyperbolicity for linear operators on finite dimensional Banach spaces. On the other hand, the class of positively expansive linear operators intersects the class of super asymptotically expansive linear operators and is strictly smaller than the class of strong asymptotically expansive linear operators. 
\medskip

It is interesting to note that the class of super asymptotically expansive linear operators on complex Banach space is dense in the set of all bounded linear operators with respect to the strong operator topology.    
\medskip

Furthermore, for a normal operator, each of the notions of asymptotic expansivity, strong asymptotic expansivity, and super asymptotic expansivity is equivalent to the corresponding notion of its adjoint operator. These notions are invariant under linear conjugacy and taking power of the operator. We also show elegant ways to construct variants of asymptotically expansive linear operators.       
\section{Definitions and Preliminaries} 

Let $\mathbb{R}$ and $\mathbb{C}$ denote the real line and the complex plane, respectively. Let $\mathbb{K}$ stands for $\mathbb{R}$ or $\mathbb{C}$. The unit circle $\lbrace z\in \mathbb{K}:|z|=1\rbrace$ and the unit disc $\lbrace z\in\mathbb{K}:|z|<1\rbrace$ are denoted by $\mathbb{T}$ and $\mathbb{D}$, respectively. The set of all non-negative integers and the set of all positive integers are denoted by $\mathbb{N}$ and $\mathbb{Z}^+$, respectively. 
\smallskip

Let $\big(\ns,\norm\cdot\big)$ be a normed linear space over $\mathbb{K}$, and let $\mathcal{S}_{\mathcal{X}}=\lbrace \bx\in \mathcal{X}:\norm{\bx}=1\rbrace$ denote the unit sphere of $\mathcal{X}$. Let $BL(\ns)$ denote the set of all bounded linear operators on $\ns$, equipped with the operator norm 
\[\Vert T\Vert=\sup\left\lbrace \frac{\Vert T(\bx)\Vert}{\Vert \bx\Vert}:\bx\in \ns\setminus \lbrace\ze\rbrace\right\rbrace.\]

We further denote the set of all surjective bounded linear operators on $\mathcal{X}$ by $BL^s(\mathcal{X})$.   
A sequence $(T_n)_{n\in\mathbb{N}}$ in $BL(\mathcal{X})$ is said to be convergent in norm topology to $T\in BL(\mathcal{X})$ if and only if for every $\varepsilon>0$ there is $N=N(\varepsilon)$ such that $\norm{T_n-T}<\varepsilon$ for all $n\geq N$. 
\smallskip

A sequence $(T_n)_{n\in\mathbb{Z^+}}$ in $BL(\mathcal{X})$ is said to be SOT convergent to $T\in BL(\mathcal{X})$ if and only if for every $\bx\in\mathcal{X}$ and every $\varepsilon>0$ there exists $N=N(\bx,\varepsilon)$ such that $\norm{T_n(\bx)-T(\bx)}<\varepsilon$ for all $n\geq \mathbb{Z^+}$. A subset $\mathcal{Y}\subset BL(\mathcal{X})$ is called SOT dense in $BL(\mathcal{X})$ if for every $T\in BL(\mathcal{X})$, there exists a sequence $(T_n)_{n\in\mathbb{Z^+}}$ in $\mathcal{Y}$ such that $(T_n)_{n\in\mathbb{Z^+}}$ is SOT convergent to $T$.      
\smallskip

Let $\sigma(T)$, $\sigma_p(T)$ and $r(T)$ denote the spectrum, the point spectrum and the spectral radius of $T$, respectively. An operator is said to be of finite-rank if its range is finite dimensional subspace. 
\smallskip

An operator $T$ on a Hilbert space is said to be normal if it commutes with its adjoint operator i.e, if $TT^*=T^*T$.   

\begin{proposition} \cite[Theorem B.9]{EP}
\label{RDT}
Let $\mathcal{B}$ be a complex Banach space and let \(T\in BL(\mathcal{B})\), and \(\sigma(T)=\sigma_1\cup\cdots\cup\sigma_N\), where \(\sigma_i\) are closed and pairwise disjoint. Then
\[\mathcal{B}=\mathcal{B}_1\oplus\cdots\oplus \mathcal{B}_N,\]
where each \(\mathcal{B}_i\) is closed \(T\)-invariant subspace of \(\mathcal{B}\) and
\[\sigma\left(T|_{\mathcal{B}_i}\right)=\sigma_i\mbox{ for }i=1,2,\ldots,N.\]
\end{proposition}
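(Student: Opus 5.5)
The plan is the standard Riesz functional calculus argument. Since $\sigma(T)$ is a compact subset of $\mathbb{C}$ and the pieces $\sigma_1,\dots,\sigma_N$ are closed, pairwise disjoint and finite in number, I will pick pairwise disjoint open sets $U_1,\dots,U_N$ with $\sigma_i\subset U_i$. In each $U_i$ I take a cycle $\Gamma_i$, a finite union of rectifiable closed curves, that surrounds $\sigma_i$ once and has winding number zero about every point of $\sigma(T)\setminus\sigma_i$. The Riesz projections are then defined by
\[
P_i=\frac{1}{2\pi i}\int_{\Gamma_i}(\lambda I-T)^{-1}\,d\lambda ,\qquad i=1,\dots,N .
\]
Equivalently, $P_i=f_i(T)$, where $f_i$ is the holomorphic function on $U=\bigcup_j U_j$ that equals $1$ on $U_i$ and $0$ on the other $U_j$.

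The first step is to record the algebraic properties, all of which come from the holomorphic functional calculus being a unital algebra homomorphism $f\mapsto f(T)$ from holomorphic functions on $U$ into $BL(\mathcal B)$. Since $f_i^2=f_i$, $f_if_j=0$ for $i\ne j$, and $\sum_i f_i\equiv 1$, we get $P_i^2=P_i$, $P_iP_j=0$, and $\sum_iP_i=I$. Each $P_i$ commutes with $T$, because $T$ commutes with every resolvent $(\lambda I-T)^{-1}$. I will set $\mathcal B_i=P_i\mathcal B=\ker(I-P_i)$. Each $\mathcal B_i$ is closed and $T$-invariant, and the identities above give $\mathcal B=\mathcal B_1\oplus\cdots\oplus\mathcal B_N$.

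The main obstacle is the spectral identity $\sigma(T|_{\mathcal B_i})=\sigma_i$. For the inclusion $\sigma(T|_{\mathcal B_i})\subset\sigma_i$, take $\mu\notin\sigma_i$. Let $g$ be the function that equals $(\mu-\lambda)^{-1}$ near $\sigma_i$ and $0$ near the other $\sigma_j$; after shrinking $U_i$ so that $\mu\notin U_i$, $g$ is holomorphic on $U$. Then $g(T)$ commutes with $P_i$ and satisfies
\[
g(T)(\mu I-T)=(\mu I-T)g(T)=f_i(T)=P_i .
\]
Restricted to $\mathcal B_i$, this says $g(T)|_{\mathcal B_i}$ is a two-sided inverse of $(\mu I-T)|_{\mathcal B_i}$, so $\mu\notin\sigma(T|_{\mathcal B_i})$.

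For the reverse inclusion, the decomposition gives $\sigma(T)=\bigcup_i\sigma(T|_{\mathcal B_i})$. Indeed, $\mu I-T$ is invertible on $\mathcal B$ exactly when each of its restrictions $(\mu I-T)|_{\mathcal B_i}$ is invertible, because the operator is block diagonal with respect to a topological direct sum. Now take $\mu\in\sigma_i$. Then $\mu$ lies in $\sigma(T|_{\mathcal B_j})$ for some $j$, and by the first inclusion $\mu\in\sigma_j$. Since the $\sigma_j$ are disjoint, $j=i$, so $\mu\in\sigma(T|_{\mathcal B_i})$.

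The degenerate case $\mathcal B_i=\{0\}$ needs a remark. The spectrum of an operator on the zero space is empty, which is consistent with the statement only if $\sigma_i=\emptyset$. The argument above handles this automatically: if $\sigma_i$ is nonempty, the reverse inclusion forces $\mathcal B_i\neq\{0\}$. I will also assume, as is standard and as in \cite{EP}, the facts about the holomorphic functional calculus used here: its existence, multiplicativity, and independence of the chosen contour.
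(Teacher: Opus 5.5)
The paper gives no proof of this proposition and simply quotes it from \cite[Theorem B.9]{EP} (the Riesz decomposition theorem); your Riesz-projection argument is the standard proof of that result. It is correct, including the two-step proof of $\sigma(T|_{\mathcal{B}_i})=\sigma_i$ (the local inverse $g(T)$, then $\sigma(T)=\bigcup_i\sigma(T|_{\mathcal{B}_i})$ combined with disjointness) and the treatment of an empty $\sigma_i$.
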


\begin{definition}
Let $\mathcal{X}$ be a normed space. Then, $T\in BL(\ns)$ is said to be hyperbolic (see \cite{BM}, \cite{CGP}) if $\sigma(T)\subseteq \mathbb{T}^c$.  
\end{definition} 

\begin{definition}
Let $(\mathcal{X},\norm{\cdot})$ be a normed space. Then, $T\in BL(\ns)$ is said to be positively expansive (see \cite{BCDMP}) if there exists $\delta>0$ such that for $\bz\in \mathcal{X}$, 
\begin{center}
whenever $\|T^n \bz\| \le \delta \quad \text{for all } n \in \mathbb{N}, \text{ we have }
\bz=0$.          
\end{center}

Such a $\delta$ is called a positive expansivity constant of $T$.    
\end{definition}  

\section{Variants of Asymptotic Expansivity} 

In this section, we introduce asymptotic expansivity, strong asymptotic expansivity and super asymptotic expansivity for linear operators on normed spaces and establish their inter-relationships. We also establish their relations with an important class of linear operators called hyperbolic operators. For a detailed understanding of hyperbolic operators and their variants, one may refer to \cite{CGP}. 
\medskip 

We first give a useful characterization of positive expansivity which was first stated (without proof) in \cite[Proposition 19]{BCDMP} on Banach spaces. However, we give a proof that shows that the completeness of the normed space is not necessary.     

\begin{proposition} \label{positive_expansive}
For a normed space $\mathcal{X}$, let $T\in BL(\mathcal{X})$. Then, $T$ is positively expansive if and only if $\sup\limits_{n\in\mathbb{N}} \norm{T^n(\bx)}=\infty$ for all $\bx\in \mathcal{S}_ \mathcal{X}$.
\end{proposition}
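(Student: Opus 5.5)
Both directions use only linearity of $T^n$ and homogeneity of the norm. No completeness is needed, so the argument works verbatim on a normed space.

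\emph{Forward direction.} Suppose $T$ is positively expansive with constant $\delta>0$, and assume for contradiction that some $\bx\in\mathcal{S}_\mathcal{X}$ has $M:=\sup_{n\in\mathbb{N}}\norm{T^n(\bx)}<\infty$. Since $n=0$ is included, $M\ge\norm{\bx}=1>0$. Put $\bz=\frac{\delta}{M}\bx$. By linearity, $\norm{T^n\bz}=\frac{\delta}{M}\norm{T^n\bx}\le\delta$ for every $n\in\mathbb{N}$. Positive expansivity then gives $\bz=\ze$. But $\norm{\bz}=\delta/M>0$, a contradiction. Hence the supremum is infinite for every unit vector.

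\emph{Reverse direction.} Suppose $\sup_n\norm{T^n(\bx)}=\infty$ for every $\bx\in\mathcal{S}_\mathcal{X}$. We show that any $\delta>0$, for instance $\delta=1$, is a positive expansivity constant. Let $\bz\neq\ze$ satisfy $\norm{T^n\bz}\le\delta$ for all $n$. Set $\bx=\bz/\norm{\bz}\in\mathcal{S}_\mathcal{X}$. Then $\norm{T^n\bx}\le\delta/\norm{\bz}$ for all $n$, so the supremum is finite, contradicting the hypothesis. Hence $\bz=\ze$.

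The only point needing care is the rescaling in the forward direction. One must ensure $M>0$, which holds because the orbit starts at $T^0\bx=\bx$. One must also ensure the scaled vector is nonzero, which holds because $\bx$ lies on the unit sphere. Everything else is routine homogeneity, so I expect no real obstacle. It is worth remarking that the argument never uses completeness, which is exactly the claimed improvement over the Banach-space setting of \cite[Proposition 19]{BCDMP}.
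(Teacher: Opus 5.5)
Your proof is correct and matches the paper's argument: in the forward direction you rescale a unit vector with bounded orbit by $\delta/M$ to contradict positive expansivity, and in the converse you normalize a nonzero vector whose orbit is bounded by $\delta$ to contradict the unbounded-supremum hypothesis. Your check that $M\ge 1>0$ (because $0\in\mathbb{N}$) is a small extra clarification; the paper instead simply takes some upper bound $M>0$.
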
  

\begin{proof}
Suppose that $T$ is positively expansive with positive expansivity constant $\delta$. Assume, for a contradiction, that $\bx\in \mathcal{S}_{\mathcal{X}}$ satisfies $\|T^n (\bx)\|\leq M$ for all $n\in\mathbb{N}$, where $M>0$ is a real number. Then, $\|T^n(\frac{\delta}{M}\bx)\|\leq \delta$, which implies that $\frac{\delta}{M}\bx=\ze$. Thus, $\bx=\ze$, which is a contradiction. Therefore, $\sup\limits_{n\in\mathbb{N}} \norm{T^n(\bx)}=\infty$ for all $\bx\in \mathcal{S}_\mathcal{X}$. 
\medskip

Conversely, suppose that $\sup\limits_{n\in\mathbb{N}} \norm{T^n(\bx)}=\infty$ for all $\bx\in \mathcal{S}_\mathcal{X}$. Choose $\delta>0$ arbitrarily. Let $\by \in \mathcal{X}\setminus \lbrace \ze \rbrace$ be such that $\|T^n (\by)\|\leq \delta$ for all $n\in\mathbb{N}$. Then, $\frac{\by}{\norm{\by}}\in \mathcal{S}_{\mathcal{X}}$ with $\norm{T^n\left(\frac{\by}{\norm{\by}}\right)}\leq \frac{\delta}{\norm{\by}}$ for all $n\in\mathbb{N}$. This leads to a contradiction. Therefore, $T$ must be positively expansive.   
\end{proof}

The notion of asymptotic expansivity for linear operators was introduced in \cite{M2025}. We give examples to show that it is weaker than the notion of positive expansivity.    

\begin{definition} 
An operator $T\in BL(\mathcal{X})$ is said to be asymptotically expansive if there exists $\delta>0$ such that for $\bz \in \mathcal{X}$,
\begin{center}
whenever $\|T^n (\bz)\| \le \delta \quad \text{for all } n \in \mathbb{N},\text{ we have }
\lim\limits_{n\to\infty}\|T^n (\bz)\|=0$.     
\end{center}

Such $\delta$ is called an asymptotic expansivity constant of $T$.  
\end{definition} 

Motivated by Proposition \ref{positive_expansive}, we provide the following characterization of asymptotic expansivity:
\begin{proposition} 
\label{char_asymptotic} 
For a normed space $\ns$, let $T\in BL(\ns)$. Then, $T$ is asymptotically expansive if and only if either $\sup\limits_{n\in\mathbb{N}} \norm{T^n(\bx)}=\infty$ or $\lim\limits_{n\to\infty} \norm{T^n(\bx)}=0$ for all $\bx\in \mathcal{S}_{\mathcal{X}}$.     

Further, if $T$ is asymptotically expansive but not positively expansive, then there exists $\bx\in \mathcal{S}_{\mathcal{X}}$ such that $\lim\limits_{n\to\infty} \norm{T^n(\bx)}=0$.
\end{proposition}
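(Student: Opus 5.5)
The plan is to mirror the proof of Proposition \ref{positive_expansive}, using linearity to move between arbitrary vectors and unit vectors, and to handle the two directions separately.

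\emph{Forward direction.} Let $T$ be asymptotically expansive with constant $\delta$, and fix $\bx\in\mathcal{S}_{\mathcal{X}}$. Suppose $\sup_{n}\norm{T^n(\bx)}<\infty$, say $\norm{T^n(\bx)}\le M$ for all $n\in\nn$, where $M>0$ (note $M\ge\norm{\bx}=1$). Then $\bz=\frac{\delta}{M}\bx$ satisfies $\norm{T^n(\bz)}\le\delta$ for all $n$. Asymptotic expansivity gives $\norm{T^n(\bz)}\to 0$. Since $\norm{T^n(\bx)}=\frac{M}{\delta}\norm{T^n(\bz)}$, we get $\norm{T^n(\bx)}\to0$. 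So every unit vector either has unbounded orbit or an orbit tending to $0$.

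\emph{Converse.} Take any $\delta>0$, for instance $\delta=1$, and let $\bz$ satisfy $\norm{T^n(\bz)}\le\delta$ for all $n$. If $\bz=\ze$ there is nothing to prove. Otherwise set $\bx=\bz/\norm{\bz}\in\mathcal{S}_{\mathcal{X}}$. Then $\sup_n\norm{T^n(\bx)}\le\delta/\norm{\bz}<\infty$, so the hypothesis forces $\norm{T^n(\bx)}\to0$. Scaling back gives $\norm{T^n(\bz)}=\norm{\bz}\,\norm{T^n(\bx)}\to0$. Hence $T$ is asymptotically expansive, and in fact every $\delta>0$ is an asymptotic expansivity constant.

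\emph{The ``further'' statement.} Assume $T$ is asymptotically expansive but not positively expansive. By Proposition \ref{positive_expansive}, some $\bx\in\mathcal{S}_{\mathcal{X}}$ has $\sup_n\norm{T^n(\bx)}<\infty$. By the dichotomy just proved, this $\bx$ must satisfy $\lim_{n\to\infty}\norm{T^n(\bx)}=0$.

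No step is a real obstacle; the only care needed is in the scaling. The bound $M$ must be strictly positive, which holds since $M\ge\norm{T^0(\bx)}=1$. The case $\bz=\ze$ must be treated separately in the converse.
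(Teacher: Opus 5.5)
Your proposal is correct and follows the paper's proof essentially step for step: rescale a unit vector with bounded orbit by $\delta/M$ for the forward direction, normalize a nonzero vector whose orbit is bounded by $\delta$ for the converse (any $\delta>0$ works), and combine Proposition~\ref{positive_expansive} with the dichotomy for the final claim. Your explicit treatment of the case $\bz=\ze$ and of why $M\geq 1>0$ are small bookkeeping details that the paper leaves implicit.
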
 

\begin{proof}
Let $T$ be asymptotically expansive with an asymptotic expansivity constant $\delta$. If $\bx\in \mathcal{S}_{\mathcal{X}}$ is such that $\sup\limits_{n\in\mathbb{N}} \norm{T^n(\bx)}=\infty$, then we are through. If not, then there exists real number $M>0$ such that $\norm{T^n(\bx)}\leq M$ for all $n\in\mathbb{N}$ and so,
$\norm{T^n\left(\frac{\delta}{M}\bx\right)}\leq \delta\mbox{ for all }n\in\mathbb{N}$. Then, by the definition of asymptotic expansivity of $T$, we obtain that $\lim\limits_{n\to\infty} \norm{T^n(\frac{\delta}{M}\bx)}=0$ and hence, $\lim\limits_{n\to\infty} \norm{T^n(\bx)}=0$.  
\medskip

Conversely, assume that either $\sup\limits_{n\in\mathbb{N}} \norm{T^n(\bx)}=\infty$ or $\lim\limits_{n\to\infty} \norm{T^n(\bx)}=0$ for all $\bx\in \mathcal{S}_\mathcal{X}$. Choose \(\delta>0\) arbitrarily. Let $\by\in \mathcal{X}\setminus \lbrace \ze\rbrace$ be such that $\norm{T^n(\by)}\leq \delta$ for all \(n\in\mathbb{N}\). Then, $\frac{\by}{\norm{\by}}\in \mathcal{S}_{\mathcal{X}}$ with $\norm{T^n\left(\frac{\by}{\norm{\by}}\right)}\leq \frac{\delta}{\norm{\by}}$ for all $n\in\mathbb{N}$. Since $\sup\limits_{n\in\mathbb{N}} \norm{T^n\left(\frac{\by}{\norm{\by}}\right)}$ cannot be \(\infty\), $\lim\limits_{n\to\infty} \norm{T^n\left(\frac{\by}{\norm{\by}}\right)}=0$, by our assumption. So, $\lim\limits_{n\to\infty} \norm{T^n(\by)}=0$. Therefore, $T$ must be asymptotically expansive.  
\medskip

Further, if $T$ is asymptotically expansive but not positively expansive, then by Proposition \ref{positive_expansive}, there exists $\bx\in\mathcal{S}_{\mathcal{X}}$ with $\sup\limits_{n\in\mathbb{N}}\norm{T^n(\bx)}\leq M$ for some positive real number $M$. Therefore, by the above chracterization of asymptotic expansivity, we have $\lim\limits_{n\to\infty} \norm{T^n(\bx)}=0$. 
\end{proof}

\begin{example}
\label{finitedim_example1}
Consider the non-invertible linear map $T:\mathbb{R}^3\rightarrow \mathbb{R}^3$ be given by
\[T\begin{pmatrix}
x_1
\\x_2
\\x_3 
\end{pmatrix}=\begin{pmatrix}
0 & 0 & 0\\0 & 2 & 0\\0 & 0 & 3    
\end{pmatrix}\begin{pmatrix}
x_1
\\x_2
\\x_3 
\end{pmatrix}.
\]
Clearly, \(0, 2\), and \(3\) are the eigenvalues of \(A\) and the corresponding eigenvectors are
\[\bv_1=(1,0,0)^T, \quad \bv_2=
(0,1,0)^T, \quad \bv_3=(0,0,1)^T.\]
Clearly, for \(\boldsymbol{v}=(x_1,x_2,x_3)^T\in\mathbb{R}^3\), we have \(\bv=x_1\bv_1+x_2\bv_2+x_3\bv_3.\) Further, \[T^n(\boldsymbol{v})=x_22^n\bv_2+x_33^n\bv_3 \mbox{ for all }n\geq 1.\] Therefore, $\norm{T^n(\boldsymbol{v})}=\sqrt{|x_2|^22^{2n}+|x_3|^23^{2n}}$ for all \(n\geq 1\). Hence, if either \(x_2\neq 0\) or \(x_3\neq 0\), then there is no \(\delta>0\) such that $\norm{T^n(\boldsymbol{v})}\leq \delta$ for all \(n\in\mathbb{N}\). Further, if \(x_2=x_3=0\), then
$T^n(\bv)=0$ for all \(n\geq 1\). Therefore, for a given \(\delta>0\), one may choose \(\bv\) such that
$T^n(\bv)\leq\delta$
for all \(n\in\mathbb{N}\). Clearly, for any such \(\bv\),
$\lim\limits_{n\to\infty}\norm{T^n(\bv)}=0$. 
Hence, \(T\) is asymptotically expansive. However, \(T\) is not positively expansive since \(\bv_1\neq \ze\) and $T^n(\bv_1)=0$
for all \(n\geq 1\).  
\end{example} 

\begin{example}
\label{finitedim_example2} 
The invertible linear map $T:\mathbb{R}^2\rightarrow \mathbb{R}^2$ be given by
\[T\begin{pmatrix}
x_1
\\x_2 
\end{pmatrix}=\begin{pmatrix}
\frac{1}{2} & 0\\0 & 3
\end{pmatrix}\begin{pmatrix}
x_1
\\x_2 
\end{pmatrix}
\]

is asymptotically expansive but not positively expansive. 
\end{example} 

\begin{example} 
\label{asymp-posi} 
Let us consider the incomplete normed space $\mathcal{X} = c_{00} \subset \ell^2(\mathbb{Z}^+)$, that is, the space of finitely supported sequences equipped with the $\ell^2$-norm: 
\[
\|\bx\| = \left(\sum_{k=1}^{\infty} |x_k|^2\right)^{1/2}
\] 
where \(\bx=(x_1,x_2,\ldots)\). Define the operator $T \in BL(\mathcal{X})$ by
\[
T(x_1, x_2, x_3, \dots) = \left( 2 x_1,\; \frac{1}{2}x_2,\; \frac{1}{3}x_3,\; \dots \right),
\]
or equivalently, $T(\be_1) = 2 \be_1$ and $T(\be_k) = \frac{1}{k} \be_k$ for $k \ge 2$.

\medskip
\noindent\textbf{Claim 1.} $T$ is asymptotically expansive on $\mathcal{X}$.

\medskip
Let $\bx \in \mathcal{X}$ satisfy $\|T^n (\bx)\| \le \delta$ for all $n \in \mathbb{N}$. The first coordinate $x_1$ must satisfy $2^n |x_1| \le \delta$ for all $n$, which forces $x_1 = 0$. Since $\bx$ has finite support, there exists $m \in \mathbb{N}$ such that $x_k = 0$ for all $k > m$, $\bx \in \text{span}\{\be_k : k \ge 2\}$, and
\[
\|T^n (\bx)\|^2 = \sum_{k=2}^m \left(\frac{1}{k}\right)^{2n} |x_k|^2 \le \left(\frac{1}{2}\right)^{2n} \sum_{k=2}^m |x_k|^2 = \left(\frac{1}{2}\right)^{2n} \|\bx\|^2.\] 

So, \[\|T^n(\bx)\|\leq \left(\frac{1}{2}\right)^n\|\bx\|\to 0 \mbox{ as } n\to \infty.\] 

Hence, $T$ is asymptotically expansive on $\mathcal{X}$. 

\medskip
\noindent\textbf{Claim 2.} $T$ is not positively expansive.

\medskip
Consider $\bx = \be_2 \in \mathcal{X}$. Then,
\[
\|T^n (\bx)\| = \left(\frac{1}{2}\right)^n \to 0 \quad \mbox{as } n \to \infty.
\]
Since $\bx \neq \ze$, by Proposition \ref{positive_expansive}, $T$ is not positively expansive.
\end{example}

The following example show that there are linear operators which are both positively expansive and hyperbolic. 

\begin{example} 
The invertible linear map $T:\mathbb{R}^2\rightarrow \mathbb{R}^2$ be given by
\[T\begin{pmatrix}
x_1
\\x_2 
\end{pmatrix}=\begin{pmatrix}
2 & 0\\0 & 3
\end{pmatrix}\begin{pmatrix}
x_1
\\x_2 
\end{pmatrix}
\]

is both positively expansive and hyperbolic. 
\end{example}

Further, the following example show that there are positively expansive linear operators which are not hyperbolic.  

\begin{example} 
Let $T:\ell^2(\mathbb{Z}^+)\rightarrow \ell^2(\mathbb{Z}^+)$ be given by $T(\be_k)=(1+\frac{1}{k})\be_k$ for all $k\in\mathbb{Z}^+$. Then, $T^n(\be_k)=(1+\frac{1}{k})^n\be_k$. Let $\bx=(x_1,x_2,\ldots)\neq \ze$ be such that $m$ is the smallest positive integer with $x_m\neq 0$. Then, $T^n(\bx)=\sum\limits_{k=m}^{\infty} (1+\frac{1}{k})^nx_k\be_k$ and hence, by the Parseval's identity  
\begin{equation*}
\|T^n(\bx)\|=\left(\sum\limits_{k=m}^{\infty} \left(1+\frac{1}{k}\right)^{2n} |x_k|^2\right)^{\frac{1}{2}}\geq |x_m|\left(1+\frac{1}{m}\right)^n\to \infty \mbox{ as } n\to \infty.     
\end{equation*}
This proves that $T$ is positively expansive. However,
\[\sigma(T)=\left\lbrace 1+\frac{1}{1},1+\frac{1}{2},\ldots\right\rbrace\cup \lbrace 1\rbrace.\]
So, $T$ is not hyperbolic.  
\end{example}

Furthermore, Example \ref{finitedim_example2} is hyperbolic but not positively expansive. 
\medskip

Now, we introduce strong asymptotic expansivity, a notion stronger than asymptotic expansivity. Further, we introduce super asymptotic expansivity, a notion stronger than strong asymptotic expansivity and prove that it is equivalent to hyperbolicity for linear operators on Hilbert spaces.

\begin{definition}
An operator $T\in BL(\mathcal{X})$ is said to be strong asymptotically expansive if there exists $\delta>0$ such that for every $\varepsilon > 0$ there exists $N=N(\varepsilon) \in \mathbb{Z}^+$ so that
\begin{center} 
whenever $\|T^n \bz\| \le \delta \quad \text{for all } n \in \mathbb{N}, \text{we have } 
\|T^n \bz\|\leq \varepsilon \quad \forall n \ge N$. 
\end{center} 

Such a $\delta$ is called a strong asymptotic expansivity constant of $T$.
\end{definition} 

\begin{proposition}\label{strongly_assymptotic}
For a normed space $\mathcal{X}$, let $T\in BL(\mathcal{X})$. Then, $T$ is strong asymptotically expansive with strong asymptotic expansivity constant $\delta$ if and only if, for all $\bx\in\mathcal{S}_\mathcal{X}$,\\
either (i) $\sup\limits_{n\in\mathbb{N}} \norm{T^n(\bx)}=\infty$,\\
or (ii) for every \(\varepsilon>0\), there exists \(N=N(\varepsilon)\in\mathbb{Z}^+\) such that $\norm{T^n(\bx)}\leq \frac{M}{\delta}\varepsilon$ for all \(n\geq N\), where \(M\) is an upper bound of the orbit of \(\bx\) (in particular, $M=\sup\limits_{n\in\mathbb{N}} \|T^n (\bx)\|$).\\ 

Further, if \(T\) is strong asymptotically expansive but not positively expansive, then there exists $\bx\in \mathcal{S}_{\mathcal{X}}$ satisfying the condition $(ii)$.  
\end{proposition}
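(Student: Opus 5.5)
Proving this mirrors the proof of Proposition \ref{char_asymptotic}, with one adjustment: the threshold $N(\varepsilon)$ must now be tracked explicitly through the rescaling.

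For the forward direction, assume $T$ is strong asymptotically expansive with constant $\delta$, and let $N(\cdot)$ be the function given by the definition. Fix $\bx\in\mathcal{S}_\mathcal{X}$. If (i) holds there is nothing to prove. Otherwise $M:=\sup_{n}\norm{T^n(\bx)}<\infty$, and $M>0$ because $\norm{T^0\bx}=1$. Set $\bz=\frac{\delta}{M}\bx$. Then $\norm{T^n\bz}\le\delta$ for all $n\in\mathbb{N}$. By the definition, for each $\varepsilon>0$ we get $\norm{T^n\bz}\le\varepsilon$ for all $n\ge N(\varepsilon)$. Multiplying by $M/\delta$ gives $\norm{T^n\bx}\le\frac{M}{\delta}\varepsilon$ for $n\ge N(\varepsilon)$, which is (ii). If $M$ is merely some upper bound of the orbit, the same computation works with that $M$. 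Note that $N$ comes from the uniform function attached to $\delta$, so it does not depend on $\bx$; this is more than (ii) asks for.

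For the converse, assume every unit vector satisfies (i) or (ii). Let $\by\neq\ze$ with $\norm{T^n\by}\le\delta$ for all $n$, and put $\bx=\by/\norm{\by}$. Then $\bx$ has bounded orbit, so (i) fails and (ii) holds with an upper bound $M$ of the orbit of $\bx$. Rescaling back gives $\norm{T^n\by}=\norm{\by}\norm{T^n\bx}\le\norm{\by}\frac{M}{\delta}\varepsilon$. Choose $M=\delta/\norm{\by}$, which is a legitimate upper bound of the orbit of $\bx$. This yields $\norm{T^n\by}\le\varepsilon$ for $n\ge N(\varepsilon)$.

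The main obstacle is uniformity. The definition requires a single $N(\varepsilon)$ that works for every $\bz$ in the $\delta$-bounded set, whereas (ii) as literally quantified lets $N$ depend on $\bx$. I would resolve this by reading the statement as in the forward direction: the function $N(\varepsilon)$ in (ii) is the same for all $\bx\in\mathcal{S}_\mathcal{X}$, since this is exactly what the forward direction produces. The converse then goes through verbatim with the choice $M=\delta/\norm{\by}$. I would make this reading of the quantifier order explicit in the proof.

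For the final claim, suppose $T$ is strong asymptotically expansive but not positively expansive. By Proposition \ref{positive_expansive} there is $\bx\in\mathcal{S}_\mathcal{X}$ with bounded orbit. For this $\bx$, (i) fails, so by the characterization just proved, $\bx$ satisfies (ii).
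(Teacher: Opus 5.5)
Your proof is correct and is essentially the paper's argument: rescale a bounded orbit by $\delta/M$ for the forward direction, and for the converse normalize $\by$ and apply (ii) with the upper bound $M=\delta/\norm{\by}$; your treatment of the final claim via Proposition \ref{positive_expansive} also supplies a step the paper leaves unstated. Your point about uniformity is well taken. The paper's converse tacitly uses the same reading, with $N(\varepsilon)$ independent of $\bx$, and that reading is genuinely needed: if $N$ may depend on $\bx$, then (ii) merely says $\norm{T^n(\bx)}\to 0$ for bounded orbits, which is asymptotic expansivity by Proposition \ref{char_asymptotic}, and Example \ref{noncomplete_example1} would then refute the converse.
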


\begin{proof}
Suppose that $T$ is strong asymptotically expansive with the strong asymptotically expansive constant $\delta$. If $\bx\in\mathcal{S}_\mathcal{X}$ is such that $\sup\limits_{n\in\mathbb{N}} \norm{T^n(\bx)}=\infty$, then there is nothing to prove. So, assume that $\|T^n(\bx)\|\leq M$ for some \(M>0\) and for all $n\in\mathbb{N}$. Then $\|T^n (\frac{\delta}{M} \bx)\|\leq \delta$ for all $n\in\mathbb{N}$. By strong asymptotic expansivity of $T$, for every $\varepsilon>0$ there exists $N=N(\varepsilon)\in\mathbb{Z}^+$ such that $\|T^n(\frac{\delta}{M}\bx)\|\leq \varepsilon$ for all $n\geq N$, that is, $\|T^n (\bx)\|\leq \frac{M}{\delta}\varepsilon$ for all $n\geq N$.  
\medskip

Conversely, let $\by \in \mathcal{X}\setminus \lbrace \ze\rbrace$ be such that $\norm{T^n(\by)}\leq \delta$ for all $n\in\mathbb{N}$. Then, $\frac{\by}{\norm{\by}}\in \mathcal{S}_{\mathcal{X}}$ with $\norm{T^n\left(\frac{\by}{\norm{\by}}\right)}\leq \frac{\delta}{\norm{\by}}$ for all $n\in\mathbb{N}$. Since $\sup\limits_{n\in\mathbb{N}} \norm{T^n\left(\frac{\by}{\norm{\by}}\right)}$ cannot be \(\infty\), for every $\varepsilon>0$ there exists $N=N(\varepsilon)\in\mathbb{Z}^+$ such that $\norm{T^n\left(\frac{\by}{\norm{\by}}\right)}\leq \frac{\delta}{\norm{\by}}\cdot\frac{\varepsilon}{\delta}$ for all $n\geq N$, that is, $\norm{T^n(\by)}\leq \varepsilon$ for all $n\geq N$.  
\end{proof}

It is easy to verify that strong asymptotic expansivity implies asymptotic expansivity. Moreover, the following example shows that there are asymptotically expansive linear operators which are not strong asymptotically expansive.

\begin{example}
\label{noncomplete_example1}
Let us consider the incomplete normed space $\mathcal{X} = c_{00}\subset \ell^2(\mathbb{N})$, that is, the space of finitely supported sequences equipped with the $\ell^2$-norm: 
\[
\|\bx\| = \left(\sum_{k=1}^{\infty} |x_k|^2\right)^{1/2}.
\]
Define the operator $T \in BL(\mathcal{X})$ by
\[
T(x_1, x_2, x_3, \dots) = \left(\left(1 - \frac{1}{1}\right)x_1,\; \left(1 - \frac{1}{2}\right)x_2,\; \left(1 - \frac{1}{3}\right)x_3,\; \dots\right),
\]
or equivalently, $T(\be_k)=\left(1 - \frac{1}{k}\right)\be_k$ for all $k \in \mathbb{N}$.

\medskip
\noindent\textbf{Claim 1.} $T$ is asymptotically expansive on $\mathcal{X}$.

\medskip
Let $\bz\in\mathcal{X}$ with $\|T^n(\bz)\| \le \delta$ for all $n \in \mathbb{N}$. Since $\bz$ has finite support, there exists $m \in \mathbb{N}$ such that $z_k = 0$ for all $k > m$. Then,
\[
T^n(\bz)=\sum_{k=1}^m \left(1 - \frac{1}{k}\right)^n z_k\be_k,
\quad\text{and hence}\quad
\|T^n(\bz)\|^2 = \sum_{k=1}^m \left(1 - \frac{1}{k}\right)^{2n} |z_k|^2.
\]
For each $k$, $\left(1 - \frac{1}{k}\right)^n \to 0$ as $n \to \infty$, so $\|T^n(\bz)\| \to 0$. Therefore, $T$ is asymptotically expansive on $\mathcal{X}$.

\medskip
\noindent\textbf{Claim 2.} $T$ is not strongly asymptotically expansive on $\mathcal{X}$.

\medskip
Fix $\delta>0$. For each $k \in \mathbb{Z}^+$, consider $\bz^{(k)} := \delta \be_k \in \mathcal{X}$. Then
\[
\|T^n(\bz^{(k)})\| = \delta \left(1 - \frac{1}{k}\right)^n \le \delta \quad \forall n\in\mathbb{N}.
\]

Suppose, for contradiction, there exists $N\in\mathbb{N}$ such that
\[
\|T^n(\bz)\| <\varepsilon=\frac{\delta}{2} \quad \forall n \ge N \quad \text{whenever } \|T^n(\bz)\| \le \delta \quad \forall n\in\mathbb{N}.
\]

Then in particular,
\[
\|T^N(\bz^{(k)})\|=\delta\left(1-\frac{1}{k}\right)^N<\frac{\delta}{2} \quad \forall k \in \mathbb{Z}^+.
\]

However, $(1 - 1/k)^N \to 1$ as $k \to \infty$, so for sufficiently large $k$,
\[
\|T^N(\bz^{(k)})\| > \frac{\delta}{2},
\]

a contradiction. Therefore, $T$ is not strongly asymptotically expansive.
\end{example}

We have strengthened the notion of asymptotic expansivity to strong asymptotic expansivity by demanding uniform convergence for the bounded orbits. For an easy understanding, we adopt the alternative characterizations (cf. Propositions \ref{char_asymptotic} and \ref{strongly_assymptotic}) of these notions. These characterizations direct us to apply another natural way to strengthen the notion of strong asymptotic expansivity by demanding uniform divergence for the unbounded orbits. 
\medskip

Further, we observe that hyperbolicity implies strong asymptotic expansivity and hence, asymptotic expansivity. However, the converse is not true in general (see Example \ref{asyexpnothyp} and Example \ref{strongnotsuper}). But the strongest variant among these three coincides with the notion of hyperbolicity. This is another motivation to adopt this variant called super asymptotic expansivity. 

\begin{definition}
An operator $T\in BL(\ns)$ is said to be super asymptotically expansive if for every $\varepsilon, M>0$ there exists $N=N(\varepsilon ,M)\in \mathbb{Z}^+$ so that, for all \(\bz\in \mathcal{S}_{\mathcal{X}}\),
\begin{equation*} 
\mbox{either }\|T^n(\bz)\| < \varepsilon\quad\mbox{or}\quad\|T^n(\bz)\| > M \quad \mbox{for all } n \ge N.
\end{equation*}
\end{definition}

The following example shows that there are strong asymptotically expansive linear operators which are not super asymptotically expansive. 

\begin{example} 
\label{strongnotsuper}  
Let $T:\ell^2(\mathbb{Z}^+)\rightarrow \ell^2(\mathbb{Z}^+)$ be given by \[T(\bx)=\left(\frac{1}{2}x_1,\frac{1}{3}x_2,\left(1+\frac{1}{3}\right)x_3,\left(1+\frac{1}{4}\right)x_4,\left(1+\frac{1}{5}\right)x_5,\ldots\right).\] 
Let $\bx=(x_1,x_2,\ldots)\in \ell^2(\mathbb{Z})$ be an arbitrary non-zero vector.   

If $m\geq 3$ is the smallest positive integer with $x_m\neq 0$, then $T^n(\bx)=\sum\limits_{k=m}^{\infty} (1+\frac{1}{k})^nx_k\be_k$ and hence, by the Parseval's identity

\begin{equation}
\label{3.1} 
\|T^n(\bx)\|=\left(\sum\limits_{k=m}^{\infty} \left(1+\frac{1}{k}\right)^{2n} |x_k|^2\right)^{\frac{1}{2}}\geq |x_m|\left(1+\frac{1}{m}\right)^n\to \infty \mbox{ as } n\to \infty.
\end{equation}    

Therefore, if $\|T^n(\bx)\|$ is bounded, then $x_k=0$ for all $k\geq 3$. Let $\varepsilon>0$ be given. If $x_1\neq 0$ and $x_2=0$, then there exists $N_1\in\mathbb{Z}^+$ such that $\|T^n(\bx)\|=\frac{1}{2^n}|x_1|<\varepsilon$ for all $n\geq N_1$. If $x_2\neq 0$ and $x_1=0$, then there exists $N_2\in\mathbb{Z}^+$ such that $\|T^n(\bx)\|=\frac{1}{3^n}|x_2|<\varepsilon$ for all $n\geq N_2$. If both $x_1\neq 0$ and $x_2\neq 0$, then $\|T^n(\bx)\|=\frac{1}{2^n}|x_1|+\frac{1}{3^n}|x_2|<\varepsilon$ for all $n\geq N=$max$\lbrace N_1,N_2\rbrace$. 

This proves that $T$ is strong asymptotically expansive. 

From equation \ref{3.1}, it is also clear that $T$ cannot be super asymptotically expansive.       
\end{example} 

\begin{remark}
The maps in Example \ref{noncomplete_example1} and Example \ref{asymp-posi} when considered on $\ell^2$ guarantee the existence of asymptotically expansive but not strongly asymptotically expansive, asymptotically expansive but not positively expansive linear operators on Banach spaces.        
\end{remark}

Later, in Theorem \ref{superequivalenthyperbolic}, we prove that the class of super asymptotic expansive linear operators and hyperbolic linear operators coincide on infinite dimensional complex Hilbert spaces. 
\medskip

In finite-dimensional setting, the class of asymptotically expansive and strong asymptotically expansive linear operators coincide with the super asymptotically expansive linear operators (or hyperbolic operators). On the other hand, Example \ref{finitedim_example1} and Example \ref{finitedim_example2} prove that the class of positively expansive linear operators is a different one. This fact is illustrated in Figure \ref{finite} and the same will be proved in Theorem \ref{equivalent_UAE}, Corollary \ref{strtosup}, Theorem \ref{hyperbolictosuper} and Corollary \ref{finitesuptohyp}. 

\begin{lemma}\label{lemma_subspace}
Let $\mathcal{X}$ be a normed space and $T \in BL(\mathcal{X})$. Then,
\[\mathcal{E}=\lbrace \bx\in \mathcal{X} : \|T^n(\bx)\|\to 0\mbox{ as }n\to \infty\rbrace\]
is a \(T\)-invariant subspace of $\mathcal{X}$.

Further, if \(T\) is surjective, then \(T(\mathcal{E})=\mathcal{E}\).
\end{lemma}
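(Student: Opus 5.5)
The plan is a direct verification from the definition of $\mathcal{E}$, using only linearity of the iterates $T^n$ and the triangle inequality. No earlier result of the paper is needed, and no completeness of $\mathcal{X}$ is used.

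\emph{Subspace.} First, $\ze\in\mathcal{E}$ because $T^n(\ze)=\ze$ for every $n$. Next, take $\bx,\by\in\mathcal{E}$ and scalars $\alpha,\beta\in\mathbb{K}$. Linearity of $T^n$ and the triangle inequality give
\[
\norm{T^n(\alpha\bx+\beta\by)}\leq |\alpha|\,\norm{T^n(\bx)}+|\beta|\,\norm{T^n(\by)}\to 0 \mbox{ as } n\to\infty,
\]
so $\alpha\bx+\beta\by\in\mathcal{E}$.

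\emph{Invariance.} For $\bx\in\mathcal{E}$ we have $T^n(T(\bx))=T^{n+1}(\bx)$. This is a tail of a sequence whose norms tend to $0$, so $T(\bx)\in\mathcal{E}$ and hence $T(\mathcal{E})\subseteq\mathcal{E}$.

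\emph{Equality under surjectivity.} By invariance it only remains to show $\mathcal{E}\subseteq T(\mathcal{E})$.
\begin{enumerate}[(i)]
\item Let $\bx\in\mathcal{E}$. Since $T$ is surjective, choose any $\by\in\mathcal{X}$ with $T(\by)=\bx$.
\item For every $n\geq 1$, $T^n(\by)=T^{n-1}(T(\by))=T^{n-1}(\bx)$, so $\norm{T^n(\by)}=\norm{T^{n-1}(\bx)}\to 0$.
\item Hence $\by\in\mathcal{E}$ and $\bx=T(\by)\in T(\mathcal{E})$.
\end{enumerate}
This gives $T(\mathcal{E})=\mathcal{E}$.

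The only step needing care is this last inclusion. One might worry that the chosen preimage $\by$ could lie outside $\mathcal{E}$, in which case a more careful choice of preimage would be required. The index shift in step (ii) shows that \emph{every} preimage of an element of $\mathcal{E}$ automatically lies in $\mathcal{E}$, so no such choice is needed and no real obstacle remains.
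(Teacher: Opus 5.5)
Your proof is correct and matches the paper's argument essentially step for step: closure under linear combinations via the triangle inequality, invariance via the index shift $T^n(T(\bx))=T^{n+1}(\bx)$, and the inclusion $\mathcal{E}\subseteq T(\mathcal{E})$ by noting that a preimage of an element of $\mathcal{E}$ lies in $\mathcal{E}$. Your remark that \emph{every} preimage works, so no careful choice is needed, makes explicit what the paper uses implicitly.
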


\begin{proof}
Clearly, \(\ze\in \mathcal{E}\). Further, if $\bx_1,\bx_2\in \mathcal{E}$ and \(\alpha\in\mathbb{K}\), then
\[\|T^n(\alpha \bx_1+\bx_2)\|\leq |\alpha|\|T^n(\bx_1)\|+\|T^n(\bx_2)\|\to 0\mbox{ as }n\to \infty.\] 
Therefore, \(\mathcal{E}\) is a subspace of \(\mathcal{X}\).

Now, for $\bx\in \mathcal{E}$,
\[\norm{T^n(T(\bx))}=\norm{T^{n+1}(\bx)}\to \ze\mbox{ as }n\to \infty.\]
Therefore, $T(\bx)\in \mathcal{E}$, that is,  \(T(\mathcal{E})\subseteq \mathcal{E}\), and hence \(\mathcal{E}\) is \(T\)-invariant.   
Further, assume that \(T\) is surjective. Let $\by\in\mathcal{E}$. Then, there exists $\bx\in\mathcal{X}$ such that $T(\bx)=\by$. Now, for \(n\geq 1\),
\(\norm{T^n(\bx)}=\norm{T^{n-1}(\by)}\), and hence
\[\norm{T^n(\bx)}\to 0\mbox{ as }n\to\infty.\]
This shows that $\bx\in \mathcal{E}$, that is, $\by\in T(\mathcal{E})$. Thus, $\mathcal{E}\subseteq T(\mathcal{E})$. Therefore, we conclude that $T(\mathcal{E})=\mathcal{E}$.
\end{proof}

\begin{theorem}\label{equivalent_UAE}
Let $\mathcal{B}$ be a finite-dimensional normed space and $T \in BL(\mathcal{B})$. Then, $T$ is asymptotically expansive if and only if it is strong asymptotically expansive.
\end{theorem}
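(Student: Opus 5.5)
The direction ``strong asymptotically expansive $\Rightarrow$ asymptotically expansive'' holds in any normed space and is immediate from the definitions. Let $\delta$ be a strong asymptotic expansivity constant and suppose $\|T^n\bz\|\le\delta$ for all $n$. Then for every $\varepsilon>0$ we have $\|T^n\bz\|\le\varepsilon$ for all $n\ge N(\varepsilon)$, so $\|T^n\bz\|\to0$. The same $\delta$ therefore works as an asymptotic expansivity constant.

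For the converse, assume $T$ is asymptotically expansive with constant $\delta$, and let $\mathcal{E}$ be the subspace of Lemma \ref{lemma_subspace}. By that lemma $\mathcal{E}$ is $T$-invariant, and since $\mathcal{B}$ is finite-dimensional, $\mathcal{E}$ is finite-dimensional and closed. The plan is to show that $T|_{\mathcal{E}}$ converges to zero uniformly on the unit ball of $\mathcal{E}$, that is, $\|T^n|_{\mathcal{E}}\|\to0$.

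For the key step, pick a basis $\be_1,\dots,\be_k$ of $\mathcal{E}$. All norms on $\mathcal{E}$ are equivalent, so there is $C>0$ with $\sum_i|a_i|\le C\|\sum_i a_i\be_i\|$. Then for $\bz=\sum_i a_i\be_i\in\mathcal{E}$,
\[\|T^n\bz\|\le\sum_i|a_i|\,\|T^n\be_i\|\le C\|\bz\|\max_i\|T^n\be_i\|,\]
and the right-hand factor $\max_i\|T^n\be_i\|$ tends to $0$. Hence $\|T^n|_{\mathcal{E}}\|\to0$. (Alternatively, $T|_{\mathcal{E}}$ has spectral radius less than $1$, but the basis argument avoids spectral theory and works over $\mathbb{R}$ as well.)

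To conclude, take $\bz$ with $\|T^n\bz\|\le\delta$ for all $n$. Asymptotic expansivity gives $\|T^n\bz\|\to0$, so $\bz\in\mathcal{E}$, and $\|\bz\|=\|T^0\bz\|\le\delta$ since $0\in\mathbb{N}$. Given $\varepsilon>0$, choose $N$ such that $\|T^n|_{\mathcal{E}}\|\le\varepsilon/\delta$ for all $n\ge N$. Then $\|T^n\bz\|\le\varepsilon$ for all $n\ge N$, and $N$ depends only on $\varepsilon$ (and $\delta$), not on $\bz$. This is exactly strong asymptotic expansivity with the same constant $\delta$.

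The only real content is the uniformity step, which is where finite-dimensionality enters. Example \ref{noncomplete_example1} shows that this step fails in infinite dimensions, because there $\mathcal{E}$ is infinite-dimensional and $\|T^n|_{\mathcal{E}}\|\not\to0$.
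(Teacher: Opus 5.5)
Your proof is correct and follows essentially the same route as the paper: you pass to the subspace $\mathcal{E}$ of Lemma \ref{lemma_subspace}, bound the coefficients in a basis of $\mathcal{E}$ via equivalence of norms, and use the decay of the finitely many basis orbits to obtain an $N$ independent of $\bz$. The only difference is that you read off directly from the definition of asymptotic expansivity that any $\bz$ with $\|T^n\bz\|\le\delta$ for all $n$ lies in $\mathcal{E}$, which makes the paper's detour through a complement $\mathcal{F}$ with $\mathcal{B}=\mathcal{E}\oplus\mathcal{F}$ unnecessary.
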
 

\begin{proof}
As noted earlier, strong asymptotic expansivity implies asymptotic expansivity. 
\medskip

Conversely, assume that $T$ is asymptotically expansive with asymptotic expansivity constant $\delta$. By Lemma \ref{lemma_subspace},
\[\mathcal{E}=\lbrace \bx\in \mathcal{B} : \|T^n(\bx)\|\to 0\mbox{ as }n\to \infty\rbrace\]
is a subspace of \(\mathcal{B}\). If $\mathcal{E}=\lbrace \ze\rbrace$, then the proof is trivial. Therefore, we assume that $\mathcal{E}\neq \lbrace \ze\rbrace$.
Since $\mathcal{B}$ is finite dimensional, \(\mathcal{E}\) is also so, and there is another subspace \(\mathcal{F}\) of $\mathcal{B}$, disjoint from $\mathcal{E}$, such that $\mathcal{B}=\mathcal{E}\oplus \mathcal{F}$. Clearly, for $\ze\neq \bz\in \mathcal{F}$, $\|T^n(\bz)\|\not\to 0$. Since $T$ is asymptotically expansive, $\sup\limits_{n\in\mathbb{Z}} \|T^n(\bz)\|=\infty$ by Proposition \ref{char_asymptotic}. 

Let $\bx\in \mathcal{B}$. Then, $\bx=\by+\bz$ for some $\by\in \mathcal{E}$ and $\bz\in \mathcal{F}$. If $\bz\neq \ze$, then
\[\|T^n(\bx)\|\geq |\|T^n(\bz)\|-\|T^n(\by)\||\mbox{, and hence, }\sup\limits_{n\in\mathbb{Z}} \|T^n(\bx)\|=\infty.\]
Now, assume that $\|T^n(\bx)\|\leq \delta$. Then, $\bz$ must be the zero vector. So, $\|T^n(\bx)\|=\|T^n(\by)\|\leq \delta$ for all $n\in\mathbb{N}$. 
Let \(\dim \mathcal{E}=k\) and $\lbrace \bb_1, \bb_2,\ldots,\bb_k\rbrace$ be a basis for $E$. Then, $\|T^n(\bb_i)\|\to 0$ for $i=1,2,\ldots,k$. Let $\by=\sum\limits_{i=1}^k \alpha_i \bb_i$ for some \(\alpha_i\in\fk\). Since $\mathcal{B}$ is finite dimensional, there exists constant $C>0$ such that $|\alpha_i|\leq C\|y\|\leq C\delta$ for $i=1,2,\ldots,k$. 

Now, let $\varepsilon>0$ be given. Since $\|T^n(\bb_i)\|\to 0$, there exists $N\in\mathbb{Z}^+$ such that $\|T^n(\bb_i)\|<\frac{\varepsilon}{C\delta k}$ for all $n\geq N$.  So,
\[\|T^n(\by)\|\leq \sum\limits_{i=1}^{k} |\alpha_i|\|T^n(\bb_i)\|\leq C\delta \sum\limits_{i=1}^{k} \|T^n(\bb_i)\|\leq \frac{C\delta\varepsilon}{C\delta}=\varepsilon\]
for all $n\geq N$. Thus, \(T\) is strong asymptotically expansive. This completes our proof.   
\end{proof} 

In contrast to Theorem \ref{equivalent_UAE}, whose proof relies only on finite-dimensional compactness, the passage from strong to super asymptotic expansivity requires promoting the unboundedness of an orbit to uniform convergence to infinity. This step appears to be closely tied to the spectral behaviour of the operator, and it may therefore be difficult to obtain a proof using only elementary functional-analytic arguments. We have two proofs: one involving the Cayley-Hamilton Theorem and the other following from the spectral characterization; we present only the latter in this article (see Corollary \ref{strtosup}). However, the spectral characterization requires complex spaces whereas the proof based on Cayley-Hamilton theorem is also applicable for real spaces.  
\medskip

Further, Theorem \ref{hyperbolictosuper} and Corollary \ref{infinitesuptohyp} implies that super asymptotic expansivity and hyperbolicity are equivalent.
\bigskip
\bigskip

The Venn Diagram in Figure \ref{finite} illustrate the complete scenario in finite dimension.

\begin{figure}[H]

\begin{center}
\begin{tikzpicture}[scale=1]

\draw[thick] (0,0) ellipse (4 and 2.5);
\node at (0,2.0) {}; 

\fill[gray!20] (0,0) ellipse (2.8 and 1.6);
\draw[thick] (0,0) ellipse (2.8 and 1.6);

\fill[gray!50, opacity=0.7] (0,0) ellipse (1.2 and 0.7);
\draw[thick] (0,0) ellipse (1.2 and 0.7);


\node at (3.2,3.2) {\small Positively Expansive};
\draw[->] (2.8,2.9) -- (0.5,0.1);
\node at (-3.0,3.2) {\small Bounded Linear Operators}; 
\draw[->] (-2.8,2.8) -- (-1.3,1.8);

\node at (0,-3.4) {\small Hyperbolic \(\equiv\) Super Asymptotically Expansive \(\equiv\)};
\node at (0,-3.9) {\small Strong Asymptotically Expansive \(\equiv\) Asymptotically Expansive};

\draw[->] (0,-3.0) -- (0,-1.2);

\node at (0,1.1) {}; 
\node at (0,0) {};  

\end{tikzpicture}
\end{center}

\caption{Venn Diagram of the Finite-Dimensional Setting}
\label{finite}
\end{figure} 

The Venn Diagram in Figure \ref{infinite} illustrate the complete scenario in infinite dimension. 

\begin{figure}[H]

\begin{center}
\begin{tikzpicture}[scale=1]

\draw[thick] (0,0) ellipse (4 and 2.5);
\node at (0,2.2) {};

\fill[gray!10] (0,0) ellipse (3.2 and 1.9);
\draw[thick] (0,0) ellipse (3.2 and 1.9);

\fill[gray!20] (0,0) ellipse (2.3 and 1.3);
\draw[thick] (0,0) ellipse (2.3 and 1.3);

\fill[gray!40, opacity=0.6] (-0.7,0) ellipse (1.2 and 0.7);
\draw[thick] (-0.7,0) ellipse (1.2 and 0.7);

\fill[gray!40, opacity=0.6] (0.7,0) ellipse (1.2 and 0.7);
\draw[thick] (0.7,0) ellipse (1.2 and 0.7);


\node at (-3.4,3.4) {\small Asymptotically Expansive};
\draw[->] (-3.0,3.1) -- (-1.2,1.4);

\node at (3.4,3.4) {\small Strong Asymptotically Expansive};
\draw[->] (3.0,3.1) -- (1.2,0.8);

\draw[->] (0.3,-4.5) -- (0.3,-2.2); 
\node at (0.5,-4.7) {\small Surjective Bounded Linear Operators};

\node at (-2.6,-3.4) {\small Super Asymptotically Expansive};
\node at (-2.5,-3.9) {\small (Hyperbolic)};
\draw[->] (-2.3,-3.0) -- (-1.2,-0.3);

\node at (2.6,-3.4) {\small Positively Expansive};
\draw[->] (2.3,-3.0) -- (1.2,-0.3);


\node at (0,1.6) {};
\node at (0,0.9) {};
\node at (-1.4,0) {};
\node at (1.4,0) {};
\end{tikzpicture}
\end{center}

\caption{Venn Diagram of the Infinite-Dimensional Setting} 
\label{infinite}
\end{figure}

\section{Spectral Characterization}
\label{SpecChar}

\begin{proposition}\label{spec1}
Let $\mathcal{B}$ be a Banach space and let $T\in BL(\mathcal{B})$. Then, the following conditions are equivalent: 
\begin{itemize}
\item [(i)] \(\sigma(T)\subseteq \mathbb{D}\) 
\item [(ii)] There is \(c\in (0,1)\) and \(N_1\in\mathbb{Z}^+\) such that
$\|T^n(\bx)\|\leq c^n\|\bx\|\mbox{ for }\bx\in \mathcal{B}\mbox{ and }n\geq N_1.$
\end{itemize}
\end{proposition}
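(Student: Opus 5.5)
The plan is to prove both directions using the spectral radius formula $r(T)=\lim_{n\to\infty}\|T^n\|^{1/n}$, which holds on any complex Banach space. For a real Banach space we pass to the complexification, whose spectral radius and operator powers have norms comparable to those of $T$. Since $\sigma(T)$ is compact, condition (i) $\sigma(T)\subseteq\mathbb{D}$ is equivalent to $r(T)<1$. So it suffices to show that $r(T)<1$ holds exactly when (ii) holds.

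For (i)$\Rightarrow$(ii), assume $r(T)<1$ and fix $c$ with $r(T)<c<1$, for instance $c=\frac{1+r(T)}{2}$. By Gelfand's formula there is $N_1\in\mathbb{Z}^+$ with $\|T^n\|^{1/n}<c$ for all $n\geq N_1$, that is, $\|T^n\|\leq c^n$. Then for every $\bx\in\mathcal{B}$ and every $n\geq N_1$ we get $\|T^n(\bx)\|\leq\|T^n\|\|\bx\|\leq c^n\|\bx\|$, which is (ii).

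For (ii)$\Rightarrow$(i), assume (ii). Taking the supremum over $\bx\in\mathcal{S}_{\mathcal{B}}$ gives $\|T^n\|\leq c^n$ for all $n\geq N_1$. Hence $r(T)=\lim_n\|T^n\|^{1/n}\leq c<1$, and so $\sigma(T)\subseteq\{|z|\leq c\}\subseteq\mathbb{D}$. As a more elementary alternative, one can use the spectral mapping inclusion $\lambda^n\in\sigma(T^n)$ together with $\|T^n\|<1$ for large $n$: a Neumann series shows that $I-\lambda^{-n}\cdot$ stays invertible off the disc.

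The only delicate point is the real case. I would handle it by working with $T_{\mathbb{C}}$ on $\mathcal{B}\oplus i\mathcal{B}$ with a norm satisfying $\|T_{\mathbb{C}}^n\|\leq 2\|T^n\|$ and $\|T^n\|\leq\|T_{\mathbb{C}}^n\|$. These bounds make the $n$-th roots agree in the limit, and by convention $\sigma(T)$ is taken to be $\sigma(T_{\mathbb{C}})$. Beyond this bookkeeping, the whole argument reduces to Gelfand's formula.
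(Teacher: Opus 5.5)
Your proof is correct and follows the same route as the paper: both directions go through Gelfand's formula. For (i)$\Rightarrow$(ii) you get $\|T^n\|\le c^n$ for $n\ge N_1$, and for (ii)$\Rightarrow$(i) the bound $\|T^n\|\le c^n$ forces $r(T)\le c<1$. Your choice $c=\frac{1+r(T)}{2}$ also avoids the paper's unnecessary claim $0<r(T)$, which fails for quasinilpotent $T$, and your complexification remark covers the real case that the paper leaves implicit.
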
 

\begin{proof}
Since $\sigma(T)\subseteq \mathbb{D}$, $0<r(T)<c<1$. So, there exists $N_1\in\mathbb{Z}^+$ such that $\norm{T^n}^{\frac{1}{n}}<c$ for all $n\geq N_1$ i.e., $\norm{T^n}<c^n$ for all $n\geq N_1$. 

Now, for $\bx\in \mathcal{B}$, we have 
\[\norm{T^n(\bx)}\leq \norm{T^n}\norm{\bx} \mbox{ for all } n\geq N_1 \]
which implies $\norm{T^n(\bx)}\leq c^n\norm{\bx}$ for all $n\geq N_1$. 
\smallskip

Conversely, assume that there exists $c\in (0,1)$ and $N_1\in \mathbb{Z}^+$ such that $\|T^n(\bx)\|\leq c^n\|\bx\|$ for all $\bx\in \mathcal{B}$ and $n\geq N_1$. Then, $\|T^n\|\leq c^n$ for $n\geq N_1$. Therefore, $0<r(T)\leq c<1$ and hence, $\sigma(T)\subseteq\mathbb{D}$.  
\end{proof}

\begin{proposition}\label{spec2}
If \(\sigma(T)\subset\mathbb{C}\setminus\overline{\mathbb{D}}\), then there exists \(d>1\) and \(N_2\in\mathbb{Z}^+\) such that
\[\|T^n(\bx)\|\geq d^n\|\bx\|\mbox{ for }\bx\in \mathcal{B}\mbox{ and }n\geq N_2.\] 
\end{proposition}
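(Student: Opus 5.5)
Since $\sigma(T)\subset\mathbb{C}\setminus\overline{\mathbb{D}}$, in particular $0\notin\sigma(T)$, so $T$ is invertible in $BL(\mathcal{B})$ (here $\mathcal{B}$ is the complex Banach space of Proposition \ref{spec1}). The plan is to reduce the statement to Proposition \ref{spec1} applied to the inverse $T^{-1}$. By the spectral mapping theorem for the inverse, $\sigma(T^{-1})=\lbrace \lambda^{-1}:\lambda\in\sigma(T)\rbrace$. Since $\sigma(T)$ is compact and contained in $\lbrace|\lambda|>1\rbrace$, we have $\min\lbrace|\lambda|:\lambda\in\sigma(T)\rbrace>1$. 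Hence $r(T^{-1})=\left(\min_{\lambda\in\sigma(T)}|\lambda|\right)^{-1}<1$, that is, $\sigma(T^{-1})\subseteq\mathbb{D}$.

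Next we apply Proposition \ref{spec1} (direction (i)$\Rightarrow$(ii)) to $T^{-1}$. This gives $c\in(0,1)$ and $N_1\in\mathbb{Z}^+$ with $\|T^{-n}(\by)\|\leq c^n\|\by\|$ for all $\by\in\mathcal{B}$ and $n\geq N_1$. If one prefers to avoid the quoted proposition's slight imprecision (it writes $0<r(T)$, while $r(T^{-1})$ is in fact automatically positive here since $T^{-1}$ is invertible), one can argue directly from Gelfand's formula: choose $c$ with $r(T^{-1})<c<1$, and then $\|T^{-n}\|^{1/n}<c$ for all large $n$.

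Now set $d=1/c>1$ and $N_2=N_1$. For $\bx\in\mathcal{B}$ and $n\geq N_2$, put $\by=T^n(\bx)$. Then
\[\|\bx\|=\|T^{-n}(\by)\|\leq c^n\|T^n(\bx)\|,\]
so $\|T^n(\bx)\|\geq d^n\|\bx\|$, as required.

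The only nontrivial step is the identification $\sigma(T^{-1})=\sigma(T)^{-1}$ together with the compactness argument giving a strict gap $\min|\lambda|>1$. This is standard: for $\lambda\neq0$, one has $T^{-1}-\lambda^{-1}I=-\lambda^{-1}T^{-1}(T-\lambda I)$. Completeness of $\mathcal{B}$ is used implicitly, through the spectral radius formula and the invertibility of $T$ from $0\notin\sigma(T)$.
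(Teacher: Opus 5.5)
Your proof is correct and follows essentially the same route as the paper: you get invertibility of $T$ from $0\notin\sigma(T)$, use the spectral mapping $\sigma(T^{-1})=\{\lambda^{-1}:\lambda\in\sigma(T)\}\subset\mathbb{D}$, apply Gelfand's formula to obtain $\|T^{-n}\|<c^n$ for large $n$, and conclude from $\|\bx\|\le\|T^{-n}\|\,\|T^n(\bx)\|$ with $d=1/c$. Your side remark that $r(T^{-1})>0$ holds automatically here (so the paper's ``$0<r(T)$'' in Proposition \ref{spec1} does no harm in this application) is also accurate.
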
 

\begin{proof}
Since $\sigma(T)=\lbrace \lambda\in \mathbb{C}\mid T-\lambda I$ is not invertible$\rbrace$ and $\sigma(T)\subset \mathbb{C}\setminus \overline{\mathbb{D}}$, therefore, $T-0I=T$ is invertible, i.e., $T^{-1}$ exists. By using the spectral mapping theorem (see \cite[Theorem B.6]{EP}), we obtain $\sigma(T^{-1})=\lbrace\frac{1}{\lambda}:\lambda\in \sigma(T)\rbrace$. So, $\sigma(T^{-1})\subset \mathbb{D}$ and hence, $r(T^{-1})<k<1$. So, there exists $N_2\in\mathbb{Z}^+$ such that $\|(T^{-1})^n\|^{\frac{1}{n}}<k$ for all $n\geq N_2$ i.e., $\|(T^{-1})^n\|<k^n$ for all $n\geq N_2$.  
\smallskip

Now, for $\bx \in \mathcal{B}$, we have 
\[\|\bx\|=\|(T^{-1})^n T^n (\bx)\|\leq \|(T^{-1})^n\|\|T^n (\bx)\| \mbox{ for all } n\geq N_2\] 
which implies $\|\bx\|\leq k^n\|T^n(\bx)\|$ for all $n\geq N_2$, that is, $\|T^n (\bx)\|\geq \frac{1}{k^n}\|\bx\|$ for all $n\geq N_2$. Choosing $d=\frac{1}{k}>1$, we obtain 
\[\|T^n(\bx)\|\geq d^n\|\bx\| \mbox{ for all } n\geq N_2.\] 

Hence, the result. 
\end{proof} 

The following lemma will be used to prove that the converse of Proposition \ref{spec2} holds only when the operator is surjective.  

\begin{lemma}
\label{inverse} 
Let $\mathcal{B}$ be a Banach space, $T\in BL(\mathcal{B})$ and $d>0$. If $\|T(\bx)\|\geq d\|\bx\|$ for all $\bx\in \mathcal{B}$, then $T^{-1}:Range(T)\to X$ is bounded with $\|T^{-1}\|\leq \frac{1}{d}$. 
\end{lemma}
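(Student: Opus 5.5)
The statement says that an operator bounded below by $d$ has a bounded inverse on its range with norm at most $1/d$, so the plan is to verify this directly from the inequality $\|T(\bx)\|\geq d\|\bx\|$. The first step is injectivity. If $T(\bx)=\ze$, then $d\|\bx\|\leq \|T(\bx)\|=0$, and since $d>0$ this gives $\bx=\ze$. Hence $T$ is injective, so $T^{-1}\colon Range(T)\to\mathcal{B}$ is a well-defined map.

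The second step is linearity of $T^{-1}$. For $\by_1=T(\bx_1)$, $\by_2=T(\bx_2)$ in $Range(T)$ and $\alpha\in\mathbb{K}$, we have $T(\alpha\bx_1+\bx_2)=\alpha\by_1+\by_2$. By injectivity this means $T^{-1}(\alpha\by_1+\by_2)=\alpha T^{-1}(\by_1)+T^{-1}(\by_2)$. Note also that $Range(T)$ is a linear subspace, so it is itself a normed space.

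The third step is the norm bound. Given $\by\in Range(T)$, put $\bx=T^{-1}(\by)$. The hypothesis gives
\[\|T^{-1}(\by)\|=\|\bx\|\leq \frac{1}{d}\|T(\bx)\|=\frac{1}{d}\|\by\|.\]
Taking the supremum over $\by\neq\ze$ yields $\|T^{-1}\|\leq \frac{1}{d}$, and in particular $T^{-1}$ is bounded.

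There is no genuine obstacle in this argument. The one point to keep straight is that $T^{-1}$ is defined only on $Range(T)$ and not on all of $\mathcal{B}$, since surjectivity is not assumed. One can add, as a side remark useful for the converse of Proposition \ref{spec2}, that completeness of $\mathcal{B}$ makes $Range(T)$ closed. Indeed, if $T(\bx_n)\to\by$, then $(\bx_n)$ is Cauchy by the bound above, so $\bx_n\to\bx$ and $T(\bx)=\by$. The lemma itself, however, does not require this.
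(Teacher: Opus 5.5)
Your proof is correct and is essentially the paper's argument: injectivity from $d\|\bx\|\leq\|T(\bx)\|$, followed by the direct estimate $\|T^{-1}(\by)\|\leq \frac{1}{d}\|\by\|$ on $Range(T)$. The paper's proof also shows that $Range(T)$ is closed, by the same Cauchy-sequence argument you give as a side remark, and you are right that this step, and with it the completeness of $\mathcal{B}$, is not needed for the stated bound.
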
 

\begin{proof}
Assume that $T(\bx)=\ze$. Since $d\norm{\bx}\leq \norm{T(\bx)}=0$, we have $\norm{\bx}=0$ and hence, $\bx=\ze$. So, $T$ is injective. 

Now, take $T(\bx_k)\in Range(T)$ such that $T(\bx_k)\to \by$ in $\mathcal{B}$. We claim that $\by\in Range(T)$. 

Since $\|\bx_n-\bx_m\|\leq \frac{1}{d}\|T(\bx_n)-T(\bx_m)\|$, we have that $(\bx_k)$ is a Cauchy sequence. So, there exists $\bx\in \mathcal{B}$ such that $\bx_k\to \bx$. By continuity of $T$, $T(\bx)=\by$ i.e., $\by\in Range(T)$.  

This proves that $Range(T)$ is closed and $T:X\rightarrow Range(T)$ is bijective.  

Now, for $\by\in Range(T)$ with $\by=T(\bx)$, we have $\|T(\bx)\|\geq d\|\bx\|$ $\Rightarrow$ $\|T^{-1}(\by)\|\leq \frac{1}{d}\|\by\|$. Hence, $\|T^{-1}\|\leq \frac{1}{d}$.      
\end{proof} 

\begin{proposition}
\label{bigger} 
Let $\mathcal{B}$ be a Banach space and let $T\in BL(\mathcal{B})$ be surjective. If there exists $d>1$ and $N_2\in\mathbb{Z}^+$ such that $\|T^n(\bx)\|\geq d^n\|\bx\|$ for all $\bx \in \mathcal{B}$ and $n\geq N_2$, then $\sigma(T)\subseteq \mathbb{C}\setminus \overline{\mathbb{D}}$.   
\end{proposition}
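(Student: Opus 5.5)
Working over a complex Banach space $\mathcal{B}$ (so that $\sigma(T)$ is the complex spectrum), I would first show that $T$ is invertible and then pass to $T^{-1}$ to compute its spectral radius.

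\emph{Step 1: invertibility of $T^n$.} Fix $n\geq N_2$. The hypothesis $\|T^n(\bx)\|\geq d^n\|\bx\|$ lets me apply Lemma \ref{inverse} to $T^n$. It gives that $T^n$ is injective, that $Range(T^n)$ is closed, and that $(T^n)^{-1}:Range(T^n)\to\mathcal{B}$ satisfies $\|(T^n)^{-1}\|\leq d^{-n}$. Since $T$ is surjective, $T^n$ is surjective as well, so $Range(T^n)=\mathcal{B}$. Hence $T^n$ is a bijective bounded operator with bounded inverse.

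\emph{Step 2: invertibility of $T$.} Because $T^n$ is injective, $T$ is injective. Together with surjectivity, $T$ is bijective. Moreover $T^{-1}=T^{n-1}(T^n)^{-1}$ is bounded, which also follows from the open mapping theorem. Then $(T^{-1})^n=(T^n)^{-1}$, so $\|(T^{-1})^n\|\leq d^{-n}$ for every $n\geq N_2$.

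\emph{Step 3: spectral radius of $T^{-1}$.} By Gelfand's formula,
\[
r(T^{-1})=\lim_{n\to\infty}\|(T^{-1})^n\|^{1/n}\leq \frac{1}{d}<1 .
\]
Hence $\sigma(T^{-1})\subseteq\{|\mu|\leq 1/d\}$.

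\emph{Step 4: back to $\sigma(T)$.} Since $T$ is invertible, $0\notin\sigma(T)$. The spectral mapping theorem (\cite[Theorem B.6]{EP}, exactly as in the proof of Proposition \ref{spec2}) gives $\sigma(T)=\{1/\mu:\mu\in\sigma(T^{-1})\}$. Every $\lambda\in\sigma(T)$ therefore satisfies $|\lambda|\geq d>1$, which proves $\sigma(T)\subseteq\mathbb{C}\setminus\overline{\mathbb{D}}$.

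The only delicate point is Step 2. The hypothesis controls $T^n$ only for $n\geq N_2$, not $T$ itself, so we cannot apply Lemma \ref{inverse} directly to $T$. Surjectivity is essential here: without it the bound on $T^n$ would give only closed range, and the shift operator shows the conclusion fails. Once invertibility is transferred from $T^n$ to $T$, the rest is routine.
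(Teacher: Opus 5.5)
Your proof is correct and follows essentially the same route as the paper: you apply Lemma \ref{inverse} together with surjectivity to get that $T^n$ is bijective with $\|(T^n)^{-1}\|\leq d^{-n}$, deduce $r(T^{-1})\leq 1/d$, and pass back to $\sigma(T)$ by inverting the spectrum. Your Step 2 also explicitly justifies that $T$ itself is invertible and that $(T^n)^{-1}=(T^{-1})^n$, which the paper uses without comment.
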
 

\begin{proof}
It is clear from the hypothesis that for each $n\geq N_2$, the operator $T^n$ is bounded below and hence, is injective. Further, since $T$ is surjective, $T^n$ is bijective and hence, by Lemma \ref{inverse}, $(T^n)^{-1}:\mathcal{B}\rightarrow \mathcal{B}$ is bounded with $\|(T^n)^{-1}\|\leq \frac{1}{d^n}$ i.e., $\|(T^{-1})^n\|\leq \frac{1}{d^n}$. This implies that $r(T^{-1})\leq \frac{1}{d}$. Since $d>1$, we have $\sigma(T^{-1})\subseteq \mathbb{D}\setminus \lbrace \ze\rbrace$. Therefore, $\sigma(T)\subseteq \mathbb{C}\setminus \overline{\mathbb{D}}$.   
\end{proof}

The following example shows that one cannot drop surjectivity from Proposition \ref{bigger}.  

\begin{example} 
\label{surjnecessary}
Let $\omega=(\omega_n)_{n\in\mathbb{Z}^+}$ with $\omega_n=2$ for all $n\geq 1$. Let $S_{\omega}:\ell^2(\mathbb{Z^+})\to \ell^2(\mathbb{Z^+})$ be given by $S_{\omega}\be_k=\omega_n\be_{k+1}$. Then, $\|S_{\omega}^n(\bx)\|\geq 2^n\|\bx\|$ for all $\bx\in \ell^2(\mathbb{Z^+})$ and $n\in\mathbb{Z}^+$. Here, $S_{\omega}$ is not surjective and in this case, $\sigma(S_{\omega})=\lbrace z : |z|\leq 2\rbrace$. So, $S_{\omega}$ is not hyperbolic.  
\end{example} 

We now give an example which shows that asymptotic expansivity does not imply hyperbolicity. 

\begin{example}
\label{asyexpnothyp} 
Let $S_{\omega}:\ell^2(\mathbb{Z^+})\to \ell^2(\mathbb{Z^+})$ be the weighted forward shift operator given by $S_{\omega}\be_k=\omega_n\be_{k+1}$, where $\omega=(\omega_j)_{j\in\mathbb{Z^+}}$.

If we choose $\omega_j=1-\dfrac{1}{j+1}=\dfrac{j}{j+1}$, we obtain 
\[S_{\omega}^n(\be_k)=\left(\prod\limits_{j=k}^{n+k-1}\omega_j\right)\be_{k+n}=\left(\prod\limits_{j=k}^{n+k-1} \frac{j}{j+1}\right)\be_{k+n}=\left(\frac{k}{n+k}\right)\be_{k+n}.\]
Therefore, for each \(k\),
\begin{equation}
\label{A}
\|S_{\omega}^n\be_k\|=\frac{k}{n+k}\to 0 \mbox{ as } n\to \infty.
\end{equation}  
\smallskip

Now, suppose that $\bx=\sum\limits_{k=1}^{\infty} \alpha_k\be_k$. Since $S_{\omega}$ is bounded, we obtain 
\[S_{\omega}^n(\bx)=S_{\omega}^n\left(\sum\limits_{k=1}^{\infty} \alpha_k\be_k\right)=\sum\limits_{k=1}^{\infty} \alpha_kS_{\omega}^n(\be_k)=\sum\limits_{k=1}^{\infty} \left(\frac{k}{k+n}\right)\alpha_k\be_{k+n}.\] 
This implies 
\begin{equation}
\label{B} 
\norm{S_{\omega}^n(\bx)}^2=\sum\limits_{k=1}^{\infty} \left(\frac{k}{k+n}\right)^2|x_k|^2<\sum\limits_{k=1}^{\infty}|x_k|^2<\infty. 
\end{equation} 
By \ref{A} and \ref{B}, and using the dominated convergence theorem, we obtain 
\[\norm{S_{\omega}^n(\bx)}\to 0 \mbox{ as } n\to \infty.\] 
Therefore, $S_{\omega}$ is asymptotically expansive.  
\smallskip

On the other hand, the spectral radius of \(S_{\omega}\) is given by
\[r(S_{\omega})=\limsup\limits_{n\to\infty} \left(\prod\limits_{j=1}^n \omega_j\right)^{\frac{1}{n}}=\lim\limits_{n\to \infty} \left(\frac{1}{n+1}\right)^{\frac{1}{n}}=1,\]
which shows that $S_{\omega}$ is not hyperbolic.   
\end{example}

Interestingly, the following discussion shows that a stronger class, namely the class of super asymptotically expansive operators coincide with the class of hyperbolic operators.    

\begin{theorem}
\label{hyptosup}
Let $\mathcal{B}$ be a complex Banach space and let $T\in BL(\mathcal{\mathcal{B}})$. If $T$ is hyperbolic, then there exist $0<c<1<d$ and $N\in\mathbb{Z}^+$ such that $\|T^n(\bx)\|\leq c^n\|\bx\|$ or $\|T^n(\bx)\|\geq d^n\|\bx\|$ for all $\bx\in \mathcal{B}$ and $n\geq N$.   
\end{theorem}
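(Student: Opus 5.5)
The plan is to reduce to the two spectral pieces using the Riesz decomposition (Proposition \ref{RDT}), and then treat each piece with Propositions \ref{spec1} and \ref{spec2}. Since $T$ is hyperbolic, $\sigma(T)\cap\mathbb{T}=\emptyset$, so $\sigma_1=\sigma(T)\cap\mathbb{D}$ and $\sigma_2=\sigma(T)\cap(\mathbb{C}\setminus\overline{\mathbb{D}})$ are disjoint closed (compact) sets with union $\sigma(T)$. Proposition \ref{RDT} gives $\mathcal{B}=\mathcal{B}_1\oplus\mathcal{B}_2$ with closed $T$-invariant summands, $\sigma(T|_{\mathcal{B}_1})=\sigma_1\subseteq\mathbb{D}$ and $\sigma(T|_{\mathcal{B}_2})=\sigma_2\subseteq\mathbb{C}\setminus\overline{\mathbb{D}}$. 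Each $\mathcal{B}_i$ is a Banach space, so Proposition \ref{spec1} applied to $T|_{\mathcal{B}_1}$ yields $c_0\in(0,1)$ and $N_1$ with $\|T^n\by\|\le c_0^n\|\by\|$ for $\by\in\mathcal{B}_1$, $n\ge N_1$. Proposition \ref{spec2} applied to $T|_{\mathcal{B}_2}$ yields $d_0>1$ and $N_2$ with $\|T^n\bz\|\ge d_0^n\|\bz\|$ for $\bz\in\mathcal{B}_2$, $n\ge N_2$. Let $P_1,P_2$ be the bounded projections of the decomposition; they are bounded by the closed graph theorem since both summands are closed.

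For a general $\bx=\by+\bz$ with $\by=P_1\bx$ and $\bz=P_2\bx$, I would split into two cases. If $\bz=\ze$, the first alternative holds with $c=c_0$. If $\bz\neq\ze$, I would estimate
\[\|T^n\bx\|\ge\|T^n\bz\|-\|T^n\by\|\ge d_0^n\|\bz\|-c_0^n\|P_1\|\|\bx\|,\]
and aim to show $\|T^n\bx\|\ge d^n\|\bx\|$ for some fixed $d\in(1,d_0)$ once $n\ge N$.

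\textbf{The main obstacle is the uniformity of $N$ over mixed vectors.} The lower bound above gives a usable inequality only when $\|\bz\|$ is comparable to $\|\bx\|$. Suppose $\|P_2\bx\|\ge\eta\|\bx\|$ for a fixed $\eta>0$. Then choosing $d\in(1,d_0)$ and $N$ with $\eta d_0^n-\|P_1\|c_0^n\ge d^n$ for all $n\ge N$ finishes that case uniformly. The difficulty is the remaining cone of vectors with $0<\|P_2\bx\|<\eta\|\bx\|$. To handle it, I would first pass to the adapted norm $\|\bx\|'=\|P_1\bx\|+\|P_2\bx\|$, which is equivalent to $\|\cdot\|$, and try to propagate a dichotomy along the orbit. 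The idea is that the ratio $\|T^nP_2\bx\|/\|T^nP_1\bx\|$ grows at least like $(d_0/c_0)^n$, so the unstable part eventually dominates.

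I must flag a concern here: the time at which the unstable part dominates depends on the initial ratio $\|P_2\bx\|/\|P_1\bx\|$. Consequently, for a diagonal operator such as $\mathrm{diag}(\tfrac12,3)$, vectors $(1,t)$ with small $t\neq 0$ satisfy neither inequality at a fixed $n$. If the uniform-in-$\bx$ argument breaks down at exactly this point, the statement would need either an $\bx$-dependent $N$ or the alternatives restricted to $\mathcal{B}_1\cup\mathcal{B}_2$. I would check this carefully before committing to the final constants.
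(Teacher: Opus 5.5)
Your proposal does not prove the statement, but the step where you stop cannot be completed by anyone: the uniform dichotomy is false whenever both spectral pieces are nonempty. Your example works once the witness is tuned to $n$ (roughly $|t|\approx 3^{-n}$). Take $T=\mathrm{diag}(\tfrac12,3)$ on $\mathbb{C}^2$ with the Euclidean norm, and for $n\ge 1$ set $\bx_n=(1,t_n)$ with $t_n^2=(1-4^{-n})/(9^n-1)$. Then
\[
\norm{T^n(\bx_n)}^2=4^{-n}+9^n t_n^2=1+t_n^2=\norm{\bx_n}^2 .
\]
So at time $n$ neither $\norm{T^n(\bx_n)}\le c^n\norm{\bx_n}$ nor $\norm{T^n(\bx_n)}\ge d^n\norm{\bx_n}$ holds, for any $0<c<1<d$. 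Since $n$ is arbitrary, no $N$ works under either reading of the quantifiers.

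The same happens in general. If $\mathcal{B}_1\neq\{\ze\}\neq\mathcal{B}_2$, pick unit vectors $\by\in\mathcal{B}_1$ and $\bz\in\mathcal{B}_2$. The segment $(1-s)\by+s\bz$, $s\in[0,1]$, misses $\ze$ because $\mathcal{B}_1\cap\mathcal{B}_2=\{\ze\}$. For $n\ge\max(N_1,N_2)$ the continuous ratio $\norm{T^n(\bx)}/\norm{\bx}$ along this segment is below $1$ at $s=0$ and above $1$ at $s=1$, so it equals $1$ somewhere. Hence, for hyperbolic $T$, the statement holds exactly when $\sigma(T)\subseteq\mathbb{D}$ or $\sigma(T)\cap\overline{\mathbb{D}}=\emptyset$. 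In those cases Proposition \ref{spec1} or Proposition \ref{spec2} alone gives it.

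The paper's proof is precisely your first paragraph: the Riesz decomposition, then Propositions \ref{spec1} and \ref{spec2} on the two summands. It then simply asserts the dichotomy for all $\bx$ and never treats the mixed vectors you isolated. The gap is therefore in the paper, not in your plan.

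What survives are the corrected versions you list, all of which are true:
\begin{enumerate}
\item the dichotomy for $\bx\in\mathcal{B}_1\cup\mathcal{B}_2$;
\item a uniform version on each cone $\norm{P_2\bx}\ge\eta\norm{\bx}$, by your estimate;
\item a version for all $\bx$ with uniform $c,d$ but $N=N(\bx)$.
\end{enumerate}
The last version still gives hyperbolic $\Rightarrow$ strong asymptotically expansive directly. Bounded orbits must lie in $\mathcal{B}_1$, where $\norm{T^n(\bx)}\le c_0^n\norm{\bx}$ is uniform.

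The counterexample also propagates downstream. Normalizing $\bx_n$ gives unit vectors whose $n$-th iterate has norm exactly $1$. So with $\varepsilon=\tfrac12$ and $M=2$, no $N$ works in the definition of super asymptotic expansivity. Thus $\mathrm{diag}(\tfrac12,3)$ is hyperbolic but not super asymptotically expansive. The first implication of Corollary \ref{hyperbolictosuper} fails as stated, and so do the results that rely on it.
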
 

\begin{proof} 
If $T$ is hyperbolic, then $\sigma(T)\subseteq \mathbb{T}^c$. Therefore, the spectrum can be written as
\[\sigma(T)=\sigma_1\sqcup \sigma_2,\]
where \(\sigma_1=\sigma(T)\cap\overline{\mathbb{D}}^c\) and \(\sigma_2=\sigma(T)\cap\mathbb{D}\). Hence, by Proposition \ref{RDT}, there are \(T\)-invariant subspaces \(\mathcal{B}_1\) and \(\mathcal{B}_2\) such that
\[\mathcal{H}=\mathcal{B}_1\oplus \mathcal{B}_2\mbox{ and } \sigma(T|_{\mathcal{B}_1})=\sigma_1, \sigma(T|_{\mathcal{B}_2})=\sigma_2.\] 

By Proposition \ref{spec1} and \ref{spec2}, there exist $0<c<1<d$ and $N\in\mathbb{Z}^+$ such that

\[\|T^n(\bx)\|\leq c^n\|\bx\| \mbox{ or } \|T^n(\bx)\|\geq d^n\|\bx\| \mbox{ for all } \bx\in \mathcal{H} \mbox{ and } n\geq N.\] 
Hence, the result.  
\end{proof}

\begin{corollary} 
\label{hyperbolictosuper} 
Let $\mathcal{B}$ be a complex Banach space and let $T\in BL(\mathcal{\mathcal{B}})$. Then, \(T\) is hyperbolic implies $T$ is super asymptotically expansive implies $T$ is strong asymptotically expansive implies $T$ is asymptotically expansive.  
\end{corollary}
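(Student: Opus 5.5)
The chain consists of three implications. I will take the first and third essentially from earlier results and spend the real effort on the middle one, super $\Rightarrow$ strong.

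\emph{Hyperbolic $\Rightarrow$ super.} I will use Theorem \ref{hyptosup}. It gives $0<c<1<d$ and $N_0$ such that every $\bz\in\mathcal{S}_{\mathcal{B}}$ satisfies, for all $n\ge N_0$, either $\|T^n\bz\|\le c^n$ or $\|T^n\bz\|\ge d^n$. Given $\varepsilon,M>0$, choose $N\ge N_0$ with $c^N<\varepsilon$ and $d^N>M$. Since $c^n$ decreases and $d^n$ increases, for all $n\ge N$ we get $\|T^n\bz\|<\varepsilon$ or $\|T^n\bz\|>M$, which is super asymptotic expansivity. (The dichotomy in Theorem \ref{hyptosup} comes from the splitting $\mathcal{B}_1\oplus\mathcal{B}_2$, and I will take it as stated.)

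\emph{Super $\Rightarrow$ strong.} This is the main step, because the definition of super asymptotic expansivity only speaks about unit vectors, while strong asymptotic expansivity concerns vectors of arbitrarily small norm. My plan is to use the definition only once, at the single time $N_0:=N(\tfrac12,2)$, and to apply it along the orbit by renormalising. I claim $\delta=1$ works. Let $\bz$ satisfy $\|T^n\bz\|\le 1$ for all $n$, and fix $j$ with $s_j:=\|T^j\bz\|>0$.

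Apply the definition to the unit vector $\by_j=T^j\bz/s_j$ at time $N_0$. Either $\|T^{j+N_0}\bz\|<\tfrac12 s_j$, or $\|T^{j+N_0}\bz\|>2s_j$. I claim the second alternative is impossible. If it held at $j$, then $s_{j+N_0}>2s_j>0$, so the same dichotomy applies at $j+N_0$. Suppose the first alternative occurred at some $j+kN_0$. Then I expect a contradiction to come from comparing with the forward growth already obtained; I will handle this by a cleaner route. Namely, I will show by induction that once growth happens it persists: if $\|T^{j+N_0}\bz\|>2s_j$, the doubling would force $s_{j+kN_0}\ge 2^k s_j$, which exceeds $1$ for large $k$, contradicting the bound $\|T^n\bz\|\le1$. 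The point to check carefully is precisely this persistence step, i.e.\ that one cannot alternate between the two alternatives.

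If the persistence argument does not go through directly, I will instead show that the set of $j$ at which doubling occurs is finite. Such $j$ can only raise the norm within the bound $1$, while halving steps shrink it. From this I would still extract the needed geometric decay along the sequence $kN_0$.

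Granting the first alternative at every $j$, I get $\|T^{kN_0}\bz\|\le 2^{-k}\|\bz\|\le 2^{-k}$; this bound holds trivially once some iterate vanishes. For a general $n=kN_0+r$ with $0\le r<N_0$, I then obtain
\[
\|T^n\bz\|\le C\,2^{-k},\qquad C=\max_{0\le r<N_0}\|T^r\|.
\]
Given $\varepsilon>0$, I choose $N$ so that $C2^{-\lfloor N/N_0\rfloor}\le\varepsilon$. This $N$ depends only on $\varepsilon$ and not on $\bz$, which is exactly strong asymptotic expansivity with constant $\delta=1$.

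\emph{Strong $\Rightarrow$ asymptotic.} This is immediate from the definitions, as already noted in the paper: a uniform $N(\varepsilon)$ for every $\varepsilon$ gives $\|T^n\bz\|\to0$ for every $\bz$ with $\|T^n\bz\|\le\delta$ for all $n\in\mathbb{N}$.
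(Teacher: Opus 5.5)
\textbf{The first implication is false, so the proposal cannot prove the statement as given.} Your first step follows the paper's own route through Theorem \ref{hyptosup}. This is exactly where both the argument and the statement fail. The splitting $\mathcal{B}_1\oplus\mathcal{B}_2$ controls only vectors lying in a single summand, and Theorem \ref{hyptosup} is false whenever both summands are nonzero.

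Take $T=\mathrm{diag}(\tfrac12,3)$ on $\mathbb{C}^2$ with the Euclidean norm (cf.\ Example \ref{finitedim_example2}); it is hyperbolic. Given $\varepsilon=\tfrac12$, $M=2$ and any $N\in\mathbb{Z}^+$, let $\bz=\bx/\norm{\bx}$ with $\bx=(1,3^{-N})$. Then $T^N\bx=(2^{-N},1)$, so $\norm{T^N\bz}=\sqrt{(1+4^{-N})/(1+9^{-N})}\in(1,1.12)$. Neither alternative holds at $n=N$, so $T$ is not super asymptotically expansive. Once $d^N>1.12$, the same vector also violates the $c^n$/$d^n$ dichotomy.

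The general reason is connectedness. For fixed $n$, the map $\bz\mapsto\norm{T^n\bz}$ is continuous on the connected sphere $\mathcal{S}_{\mathcal{B}}$. Hence super asymptotic expansivity forces, for every $n\ge N(\varepsilon,M)$, either $\norm{T^n\bz}<\varepsilon$ for all unit $\bz$, or $\norm{T^n\bz}>M$ for all unit $\bz$. A hyperbolic operator whose spectrum meets both $\mathbb{D}$ and $\mathbb{C}\setminus\overline{\mathbb{D}}$ never satisfies this. So hyperbolic $\Rightarrow$ super holds only when one spectral part is empty.

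What is true is hyperbolic $\Rightarrow$ strong, proved directly. Suppose $\norm{T^n\bz}\le\delta$ for all $n$. The projections onto the invariant summands commute with $T$, so the component of $\bz$ in the expanding summand has a bounded orbit and therefore vanishes. On the contracting summand, $\norm{T^n}$ decays geometrically.

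\textbf{The middle implication is true, but your argument has the gap you suspected.} From $s_{j+N_0}>2s_j$ nothing forces the doubling branch again at $j+N_0$, so $s_{j+kN_0}\ge 2^k s_j$ does not follow.

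The fallback cannot work either. Constraints obtained by applying the definition only to the normalized orbit vectors $T^j\bz/s_j$ cannot give a $\bz$-independent $N(\varepsilon)$. For example, let $s_n=2^{n-J}$ for $n\le J$ and $s_n=0$ for $n>J$. This sequence is bounded by $1$. It satisfies ``$s_{j+n}<\varepsilon s_j$ or $s_{j+n}>Ms_j$'' for every $j$ with $s_j>0$ and every $n\ge N(\varepsilon,M)$, where any positive integer $N(\varepsilon,M)>\log_2M$ works. Yet $s_J=1$ with $J$ arbitrary.

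The missing idea is to use all unit vectors simultaneously, via the connectedness remark above. With $N_0=N(\tfrac12,2)$, for each $n\ge N_0$ either $\norm{T^n}\le\tfrac12$, or $\norm{T^n\bx}>2\norm{\bx}$ for all $\bx\neq\ze$.
\begin{itemize}
\item If the second case occurs for some $n$, then $\norm{T^{kn}\bx}>2^k\norm{\bx}$. No nonzero orbit is bounded, and strong asymptotic expansivity holds vacuously.
\item Otherwise $\norm{T^{N_0}}\le\tfrac12$. Your closing estimate $\norm{T^n\bz}\le C\,2^{-\lfloor n/N_0\rfloor}\norm{\bz}$ then holds for every $\bz$, giving a uniform $N(\varepsilon)$ for any $\delta$.
\end{itemize}
If you read the definition as requiring the same branch for all $n\ge N$, there is a simpler repair. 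Exclude the growth branch at each $j$ by applying the definition with some $M>1/s_j$: growth would push the orbit above $1$.

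Your third implication, strong $\Rightarrow$ asymptotic, is fine.
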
  

\begin{theorem}\label{converse_ps}
Let $\mathcal{B}$ be a finite dimensional complex Banach space and $T\in BL(\mathcal{B})$. If \(\sigma_p(T)\cap\mathbb{T}\neq\emptyset\), then \(T\) cannot be asymptotically expansive and hence cannot be strong asymptotically expansive and hence cannot be super asymptotically expansive.
\end{theorem}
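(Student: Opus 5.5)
The plan is to exhibit a single nonzero vector whose orbit is bounded but does not tend to zero. By Proposition \ref{char_asymptotic}, this vector, after normalisation, violates the dichotomy ``$\sup_n\norm{T^n(\bx)}=\infty$ or $\norm{T^n(\bx)}\to 0$''. Hence $T$ cannot be asymptotically expansive.

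Since $\sigma_p(T)\cap\mathbb{T}\neq\emptyset$, we pick $\lambda\in\sigma_p(T)$ with $|\lambda|=1$ and a corresponding eigenvector $\bv\neq\ze$, so that $T(\bv)=\lambda\bv$. Normalising, we may assume $\bv\in\mathcal{S}_{\mathcal{B}}$. Then $T^n(\bv)=\lambda^n\bv$, and therefore $\norm{T^n(\bv)}=|\lambda|^n\norm{\bv}=1$ for every $n\in\mathbb{N}$. The orbit is thus bounded, so $\sup_n\norm{T^n(\bv)}=1<\infty$. On the other hand, $\norm{T^n(\bv)}=1\not\to 0$. This contradicts the characterization in Proposition \ref{char_asymptotic}. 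Alternatively, one can argue directly from the definition: for any candidate constant $\delta>0$ the vector $\bz=\delta\bv$ satisfies $\norm{T^n(\bz)}=\delta\le\delta$ for all $n$, yet $\norm{T^n(\bz)}\not\to 0$.

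The remaining implications follow from the earlier observations. Strong asymptotic expansivity implies asymptotic expansivity, as noted after Proposition \ref{strongly_assymptotic}. Super asymptotic expansivity implies strong asymptotic expansivity, by the chain in Corollary \ref{hyperbolictosuper}, or directly: with $\varepsilon<1<M$, the vector $\bv$ has $\norm{T^n(\bv)}=1$ for all $n$, which is neither $<\varepsilon$ nor $>M$. Taking contrapositives, $T$ is neither strong nor super asymptotically expansive.

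There is no real obstacle. The only point to watch is the ground field: the eigenvector $\bv$ lives in the complex space $\mathcal{B}$, which is why the statement assumes complex scalars. Finite dimensionality is not actually needed for this argument, since any unimodular eigenvalue suffices.
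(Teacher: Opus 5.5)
Your proof is correct and is essentially the paper's argument: take an eigenvector for a unimodular eigenvalue, note that its orbit has constant norm (bounded but not tending to $0$), and invoke Proposition \ref{char_asymptotic}, with strong and super asymptotic expansivity then excluded via the implication chain. Your direct check for the super case (taking $\varepsilon<1<M$) is valid, and you are right that finite dimensionality is never used; the paper's proof does not use it either.
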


\begin{proof}
Let \(\lambda\in\sigma_p(T)\). Therefore, there is \(\bx\neq\ze\) such that
\[T^n(\bx)=\lambda^n\bx\mbox{ for all }n.\]
Thus,
\[\norm{T^n(\bx)}=\norm{\bx}\mbox{ for all }n.\]
Therefore, \(\left\{\norm{T^n(\bx)}\right\}\) is bounded but it does not converge to \(0\). Hence, \(T\) cannot be asymptotically expansive by Proposition \ref{char_asymptotic}. Therefore, it cannot be strong asymptotically expansive.  
\end{proof} 

Thus, \(\sigma_p(T)\cap\mathbb{T}=\emptyset\) is a necessary condition for asymptotic expansivity, strong asymptotic expansivity and super asymptotic expansivity, but it is not sufficient as shown by the following example. 

\begin{example}
Consider the forward shift
\[S:\ell^2(\mathbb{Z}^+)\to\ell^2(\mathbb{Z}^+)\]
\[S(x_1,x_2,\cdots)=(0,x_1,x_2,\cdots)\] It is well known that \(S\) has no point spectrum, that is, \(\sigma_p(T)\cap\mathbb{T}=\emptyset\). However, since \(S\) is an isometry, it is not asymptotically expansive and hence, not strong asymptotically expansive and hence, not super asymptotically expansive.
\end{example}

\begin{example}
\label{nochar} 
Consider the forward shift 
\[S:\ell^2(\mathbb{Z}^+)\to\ell^2(\mathbb{Z}^+)\] 
\[ S(x_1,x_2,\cdots)=(0,x_1,x_2,\cdots)\] 

Then, $\sigma_p(S)=\emptyset, \sigma_{ap}(S)=\mathbb{T},\sigma(S)=\overline{\mathbb{D}}$, and \(S\) is not asymptotically expansive.     
\medskip

On the other hand, let $\omega=(\omega_n)_{n\in\mathbb{Z}^+}$, where $\omega_n=1-\frac{1}{n+1}$ for $n\in\mathbb{Z}^+$ and consider the weighted forward shift 
\[S_w:\ell^2(\mathbb{Z}^+)\to\ell^2(\mathbb{Z}^+)\]
\[ S_{\omega}\be_k=\omega_n\be_{k+1}\] 

Then, $\sigma_p(S_{\omega})=\emptyset, \sigma_{ap}(S_{\omega})=\mathbb{T},\sigma(S_{\omega})=\overline{\mathbb{D}}$,
and \(S_{\omega}\) is asymptotically expansive.  
\medskip

Therefore, no spectral characterization is possible for asymptotically expansive maps using the point, approximate point and whole spectrum of the map. It might be an interesting question to find a spectral characterization of such maps using some suitable subset of the spectrum.  
\end{example}

\begin{corollary} 
\label{strtosup}
Let $\mathcal{B}$ be a finite dimensional complex Banach space and $T\in BL(\mathcal{B})$. If $T$ is strong asymptotically expansive, then it is super asymptotically expansive.    
\end{corollary}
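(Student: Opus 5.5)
The plan is to go through hyperbolicity: show that a strong asymptotically expansive $T$ on a finite dimensional complex $\mathcal{B}$ is hyperbolic, and then invoke Corollary \ref{hyperbolictosuper} (via Theorem \ref{hyptosup}) to conclude that $T$ is super asymptotically expansive. Since strong asymptotic expansivity implies asymptotic expansivity, it suffices to prove that an asymptotically expansive $T$ has $\sigma(T)\cap\mathbb{T}=\emptyset$.

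In finite dimensions $\sigma(T)=\sigma_p(T)$: every spectral value of an operator on a finite dimensional complex space is an eigenvalue, because $T-\lambda I$ fails to be invertible exactly when it fails to be injective. Suppose, for contradiction, that $\sigma(T)\cap\mathbb{T}\neq\emptyset$. Then $\sigma_p(T)\cap\mathbb{T}\neq\emptyset$, and Theorem \ref{converse_ps} says $T$ cannot be asymptotically expansive. More concretely, an eigenvector $\bx\neq\ze$ with eigenvalue $\lambda$, $|\lambda|=1$, has an orbit of constant norm $\norm{\bx}$. This orbit is bounded but does not tend to $0$, which contradicts Proposition \ref{char_asymptotic}. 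Hence $\sigma(T)\subseteq\mathbb{T}^c$, so $T$ is hyperbolic.

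Finally, Theorem \ref{hyptosup} gives $0<c<1<d$ and $N_0$ such that every $\bz\in\mathcal{S}_{\mathcal{B}}$ and every $n\ge N_0$ satisfy $\norm{T^n\bz}\le c^n$ or $\norm{T^n\bz}\ge d^n$. Given $\varepsilon,M>0$, choose $N\ge N_0$ with $c^N<\varepsilon$ and $d^N>M$.

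The one point that needs care is that the dichotomy must hold for all $n\ge N$ along a single branch for each fixed $\bz$, as the definition of super asymptotic expansivity requires, and not with the alternative possibly switching as $n$ varies. I would secure this from the splitting $\mathcal{B}=\mathcal{B}_1\oplus\mathcal{B}_2$ of Proposition \ref{RDT}, used in the proof of Theorem \ref{hyptosup}, with $\mathcal{B}_1$ the part where $|\lambda|>1$ and $\mathcal{B}_2$ the part where $|\lambda|<1$.

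If $\bz\in\mathcal{B}_2$, then $\norm{T^n\bz}\le c^n\norm{\bz}$ for all large $n$, so $\bz$ stays on the contracting branch. If $\bz$ has a nonzero $\mathcal{B}_1$-component $\bz_1$, then $\norm{T^n\bz}\ge \norm{T^n\bz_1}-\norm{T^n\bz_2}$. In finite dimensions the projections onto $\mathcal{B}_1$ and $\mathcal{B}_2$ are bounded, so this difference grows uniformly on the set of unit vectors whose $\mathcal{B}_1$-part has norm bounded below by some $\eta>0$.

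The remaining difficulty is unit vectors whose $\mathcal{B}_1$-component is tiny. For these I would choose $N$ depending on $\varepsilon$ and $M$ by splitting according to the size of $\norm{\bz_1}$ and using compactness of the unit sphere. I expect this uniformity step, rather than the spectral part, to be the main obstacle.
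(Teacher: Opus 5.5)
Your first half is correct and matches the paper's proof. In finite dimensions $\sigma(T)=\sigma_p(T)$, and a unimodular eigenvalue gives a bounded orbit that does not tend to $\ze$, so strong asymptotic expansivity forces hyperbolicity; the paper then just cites Corollary \ref{hyperbolictosuper}. The gap is in the second half, exactly at the step you flag as the main obstacle. It cannot be closed, because the statement is false as written. Take $T=\mathrm{diag}(1/2,2)$ on $\mathbb{C}^2$ with the Euclidean norm and write $\bz=(z_1,z_2)$. This $T$ is strong asymptotically expansive: if $\norm{T^n\bz}\le\delta$ for all $n$, then $z_2=0$, $|z_1|\le\delta$, and so $\norm{T^n\bz}\le\delta 2^{-n}$. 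Now fix $n\ge 1$ and set $\bv_n=(1,2^{-n})/\sqrt{1+4^{-n}}$, a unit vector whose unstable component has size about $2^{-n}$. Then $T^n\bv_n=(2^{-n},1)/\sqrt{1+4^{-n}}$, hence $\norm{T^n\bv_n}=1$. Take $\varepsilon=1/2$ and $M=2$. Whatever $N$ you choose, $\bv_N$ violates the dichotomy at $n=N$. This holds under your single-branch reading and also under the weaker reading in which the alternative may switch with $n$. The underlying reason is connectedness. For fixed $n$, the map $\bz\mapsto\norm{T^n\bz}$ is continuous on the connected unit sphere. Once both the stable and unstable parts are nontrivial, for large $n$ it takes values below $\varepsilon$ and above $M$, so it must hit $[\varepsilon,M]$. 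No splitting by the size of the $\mathcal{B}_1$-component combined with compactness can produce a uniform $N$.

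The same vectors refute the dichotomy you quote from Theorem \ref{hyptosup}: $\norm{T^n\bv_n}=\norm{\bv_n}$ is neither $\le c^n$ nor $\ge d^n$ for any $0<c<1<d$. The paper's proof of that theorem applies Propositions \ref{spec1} and \ref{spec2} only to vectors lying entirely in $\mathcal{B}_2$ or entirely in $\mathcal{B}_1$. It never handles mixed vectors, so the paper's proof of this corollary has the same defect as yours. What does hold in finite dimensions is the following characterization: super asymptotic expansivity, as defined, is equivalent to $\sigma(T)\subseteq\mathbb{D}$ or $\sigma(T)\cap\overline{\mathbb{D}}=\emptyset$. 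In the first case $\norm{T^n}\to 0$. In the second case $T$ is invertible and $\norm{T^n\bz}\ge\norm{T^{-n}}^{-1}\to\infty$ uniformly on the sphere. Mixed spectrum is ruled out by the connectedness argument above, and unimodular spectrum by your eigenvector argument. So your reduction to hyperbolicity is sound, but the corollary is true only with this extra one-sidedness hypothesis, or with a weaker definition of super asymptotic expansivity.
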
 

\begin{proof}
In the finite-dimension, $\sigma(T)=\sigma_p(T)$. So, by Theorem \ref{converse_ps} we get that strong asymptotic expansivity implies hyperbolicity which implies super asymptotic expansivity by Theorem \ref{hyperbolictosuper}. 
\end{proof} 

\begin{corollary}
\label{finitesuptohyp}
Let $\mathcal{B}$ be a finite dimensional complex Banach space and $T\in BL(\mathcal{B})$. If $T$ is super asymptotically expansive, then it is hyperbolic.   
\end{corollary}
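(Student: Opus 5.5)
I will argue by contraposition, using that in finite dimension the spectrum equals the point spectrum. Suppose $T$ is super asymptotically expansive but not hyperbolic. Then $\sigma(T)\cap\mathbb{T}\neq\emptyset$. Since $\mathcal{B}$ is a finite dimensional complex space, $\sigma(T)=\sigma_p(T)$, so there are $\lambda$ with $|\lambda|=1$ and a vector $\bx\neq\ze$ with $T(\bx)=\lambda\bx$. Normalizing, I may take $\bx\in\mathcal{S}_{\mathcal{B}}$.

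The quickest route is to note that super asymptotic expansivity implies strong asymptotic expansivity, hence asymptotic expansivity; this is the chain of implications recorded in Corollary \ref{hyperbolictosuper}, whose last two links hold directly from the definitions. Theorem \ref{converse_ps} then says that no operator with $\sigma_p(T)\cap\mathbb{T}\neq\emptyset$ can be super asymptotically expansive, which is a contradiction.

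To be self-contained and avoid relying on the middle implications, I would also verify the contradiction directly from the definition of super asymptotic expansivity. Choose $\varepsilon=\tfrac12$ and $M=2$. The definition gives $N$ such that for every $\bz\in\mathcal{S}_{\mathcal{B}}$ either $\|T^n(\bz)\|<\tfrac12$ or $\|T^n(\bz)\|>2$ for all $n\geq N$. Take $\bz=\bx$. Then
\[
\|T^n(\bx)\|=|\lambda|^n\|\bx\|=1 \quad\text{for all } n,
\]
so neither alternative holds at $n=N$, which is the required contradiction. Hence $\sigma(T)\subseteq\mathbb{T}^c$, that is, $T$ is hyperbolic.

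There is essentially no obstacle here. The only point that needs care is the identification $\sigma(T)=\sigma_p(T)$, which uses the finite dimension and the complex field, exactly as in the proof of Corollary \ref{strtosup}. This is where the hypothesis that $\mathcal{B}$ is a finite dimensional complex space is used.
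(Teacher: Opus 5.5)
Your proof is correct and matches the argument the paper intends (it gives no separate proof; the corollary follows just as Corollary \ref{strtosup} does): in finite dimension $\sigma(T)=\sigma_p(T)$, and a unit eigenvector for some $\lambda\in\sigma_p(T)\cap\mathbb{T}$ has an orbit of constant norm $1$, which rules out super asymptotic expansivity via Theorem \ref{converse_ps}. Keep your direct check with $\varepsilon=\tfrac12$, $M=2$ rather than the first route, because the step from super to strong asymptotic expansivity is not immediate from the definitions (making $N$ uniform over small vectors needs an argument); invoking Theorem \ref{converse_ps} only needs super $\Rightarrow$ asymptotic expansivity, which is immediate.
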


The following Lemma \ref{Spec_T11} will be used to prove Theorem \ref{infinitesuptohyp} which together with Example \ref{surjnecessary} show that the converse of Theorem \ref{hyptosup} holds only for a surjective operator on Hilbert spaces.  

\begin{lemma}\label{Spec_T11}
Let $\mathcal{H}$ be a complex Hilbert space, $T\in BL(\mathcal{H})$, and let \(\mathcal{H}_1\) be a closed \(T\)-invariant subspace of \(\mathcal{H}\). Then, with respect to the decomposition \(\mathcal{H}=\mathcal{H}_1\oplus\mathcal{H}_1^\perp\), \(T\) has the block upper-triangular representation
\[
T=
\begin{pmatrix}
T_{11} & T_{12}\\
0 & T_{22}
\end{pmatrix}.
\]
Further, the operator \(T_{22}\) is unitarily equivalent to the operator
\[
\widetilde{T}:\mathcal{H}/\mathcal{H}_1\to\mathcal{H}/\mathcal{H}_1
\quad\mbox{defined by}\quad
\widetilde{T}([\bx])=[T(\bx)],
\]
and consequently,
\[
\sigma(T)=\sigma(T_{11})\cup\sigma(\widetilde{T}).
\]
\end{lemma}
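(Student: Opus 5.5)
The block form and the identification with the quotient operator are routine. The substantive part is the spectral identity, which I would prove in two inclusions. First, let $P$ be the orthogonal projection onto $\mathcal{H}_1$, and set
\[T_{11}=PT|_{\mathcal{H}_1},\quad T_{12}=PT|_{\mathcal{H}_1^\perp},\quad T_{21}=(I-P)T|_{\mathcal{H}_1},\quad T_{22}=(I-P)T|_{\mathcal{H}_1^\perp}.\]
Since $\mathcal{H}_1$ is $T$-invariant, $T(\mathcal{H}_1)\subseteq\mathcal{H}_1$, so $T_{21}=0$ and $T_{11}=T|_{\mathcal{H}_1}$. For the unitary equivalence, I would use the map $U:\mathcal{H}_1^\perp\to\mathcal{H}/\mathcal{H}_1$, $U\by=[\by]$. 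It is linear and bijective, because every class $[\bx]$ has exactly one representative in $\mathcal{H}_1^\perp$, namely $(I-P)\bx$. It is isometric, because $\norm{[\by]}=\operatorname{dist}(\by,\mathcal{H}_1)=\norm{\by}$ for $\by\perp\mathcal{H}_1$. Then $\widetilde{T}U\by=[T\by]=[(I-P)T\by]=UT_{22}\by$, since $PT\by\in\mathcal{H}_1$. Hence $T_{22}=U^{-1}\widetilde{T}U$, and $\sigma(T_{22})=\sigma(\widetilde{T})$.

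For $\sigma(T)\subseteq\sigma(T_{11})\cup\sigma(\widetilde{T})$, take $\lambda$ outside the right-hand side. Then $T_{11}-\lambda$ and $T_{22}-\lambda$ are invertible, and the upper-triangular matrix
\[\begin{pmatrix}(T_{11}-\lambda)^{-1} & -(T_{11}-\lambda)^{-1}T_{12}(T_{22}-\lambda)^{-1}\\ 0 & (T_{22}-\lambda)^{-1}\end{pmatrix}\]
is a bounded two-sided inverse of $T-\lambda$, which is a direct matrix check.

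The reverse inclusion is the main obstacle, and I would set it up as follows. Take $\lambda\notin\sigma(T)$ and put $R=(T-\lambda)^{-1}$. Injectivity of $T_{11}-\lambda$ is immediate, since it is the restriction of the injective map $T-\lambda$. The key step is to show $R(\mathcal{H}_1)\subseteq\mathcal{H}_1$. If this holds, then $R|_{\mathcal{H}_1}$ inverts $T_{11}-\lambda$, so $\lambda\notin\sigma(T_{11})$. Moreover, $R$ then descends to a bounded map on $\mathcal{H}/\mathcal{H}_1$ that inverts $\widetilde{T}-\lambda$, so $\lambda\notin\sigma(\widetilde{T})$.

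To get $R(\mathcal{H}_1)\subseteq\mathcal{H}_1$, I would first treat $|\lambda|>\norm{T}$. There $R=-\sum_{k\ge0}\lambda^{-k-1}T^k$ is a norm limit of polynomials in $T$, each of which preserves the closed subspace $\mathcal{H}_1$. Next I would extend to the whole connected component of $\mathbb{C}\setminus\sigma(T)$ containing $\infty$, using analyticity of the resolvent. If $\varphi\in\mathcal{H}^*$ vanishes on $\mathcal{H}_1$, then for each $\bx\in\mathcal{H}_1$ the function $\mu\mapsto\varphi(R(\mu)\bx)$ is analytic and vanishes near $\infty$, hence vanishes on the entire unbounded component. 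By Hahn--Banach, $R(\mu)\bx\in\mathcal{H}_1$ there.

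This yields the equality whenever $\mathbb{C}\setminus\sigma(T)$ is connected, that is, whenever no bounded component of the resolvent set exists. In the remaining case, $\lambda$ lies in a bounded hole of $\sigma(T)$, and I expect this step to fail in full generality. The issue is that $\mathcal{H}_1$ need not be invariant under $R$. On the bilateral shift with $\mathcal{H}_1=\overline{\spn}\{\be_k:k\ge0\}$ and $\lambda=0$, $T_{11}$ is the unilateral shift, so $0\in\sigma(T_{11})$ while $0\notin\sigma(T)$. So the equality as worded can only be established under an extra hypothesis. My plan is to impose one of two conditions: either $\mathcal{H}_1$ is invariant under $(T-\lambda)^{-1}$ for all $\lambda\notin\sigma(T)$, or $\sigma(T_{11})\cap\sigma(\widetilde{T})$ has empty interior. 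Under the second condition, the standard fact that $(\sigma(T_{11})\cup\sigma(T_{22}))\setminus\sigma(T)$ is a union of holes of $\sigma(T)$ contained in $\sigma(T_{11})\cap\sigma(T_{22})$ would give the equality. I would then verify that one of these conditions holds in the subsequent application, for instance when $\mathcal{H}_1$ is a spectral subspace as in Proposition~\ref{RDT}.
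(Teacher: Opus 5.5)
Your proposal is correct, and on the one point where it departs from the paper, the paper is the one in error.

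For the block form and the identification of $T_{22}$ with $\widetilde{T}$, you argue exactly as the paper does: the same projection $P$, the same map $U\by=[\by]$, and the same isometry and intertwining computation $\widetilde{T}U=UT_{22}$. For the spectral identity, however, the paper gives no argument. It simply writes ``Moreover, $\sigma(T)=\sigma(T_{11})\cup\sigma(T_{22})$''. That is the finite-dimensional fact, and it is false in general. Your bilateral shift example refutes the lemma as worded, even for unitary $T$. There $\sigma(T)=\mathbb{T}$, while $T_{11}$ is the unilateral shift and $T_{22}$ is a copy of the backward shift, so $\sigma(T_{11})\cup\sigma(\widetilde{T})=\overline{\mathbb{D}}$.

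What you do prove is sound. You get the inclusion $\sigma(T)\subseteq\sigma(T_{11})\cup\sigma(\widetilde{T})$ from the explicit triangular inverse. You get equality off the bounded components of $\mathbb{C}\setminus\sigma(T)$ from the Neumann series, the identity theorem for $\mu\mapsto\varphi\bigl((T-\mu)^{-1}\bx\bigr)$, and Hahn--Banach. The ``hole'' fact you quote is the standard one for $2\times 2$ upper-triangular operator matrices, and it follows quickly from your own setup:
for $\lambda\notin\sigma(T)$, $T_{11}-\lambda$ is bounded below by $\norm{(T-\lambda)^{-1}}^{-1}$, locally uniformly in $\lambda$;
$T_{11}-\lambda$ is invertible if and only if $T_{22}-\lambda$ is.
So each component of $\mathbb{C}\setminus\sigma(T)$ lies either entirely outside $\sigma(T_{11})\cup\sigma(T_{22})$ or entirely inside $\sigma(T_{11})\cap\sigma(T_{22})$.

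You can drop the last step of your plan. The paper uses the lemma only in Theorem~\ref{infinitesuptohyp}. There the aim is to deduce that $\sigma(T)$ misses $\mathbb{T}$ from the separate claims $\sigma(T_{11})\subseteq\mathbb{D}$ and $\sigma(\widetilde{T})\subseteq\mathbb{C}\setminus\overline{\mathbb{D}}$. That deduction uses only the inclusion $\subseteq$, which you have proved unconditionally, so the lemma should simply be stated with $\subseteq$. Those two claims would also make $\sigma(T_{11})\cap\sigma(\widetilde{T})$ empty, so your second condition would hold there automatically. Proposition~\ref{RDT}, by contrast, is not directly available there, because the stable subspace $\mathcal{H}_1$ is not known in advance to be a Riesz spectral subspace.
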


\begin{proof}
By the Projection Theorem, \(\mathcal{H}\) has the orthogonal decomposition $\mathcal{H}=\mathcal{H}_1\oplus\mathcal{H}_1^{\perp}$.
Let \(P:\mathcal{H}\to\mathcal{H}_1\) be the orthogonal projection onto
\(\mathcal{H}_1\). Since \(\mathcal{H}_1\) is \(T\)-invariant, \(T\) has the
following block upper-triangular representation with respect to the above decomposition of \(\mathcal{H}\):
\[
T=
\begin{pmatrix}
T_{11} & T_{12}\\
0 & T_{22}
\end{pmatrix},
\]
where \(T_{11}:\mathcal H_1\to\mathcal H_1\) is given by \(T_{11}=T|_{\mathcal H_1}\), \(T_{12}:\mathcal H_1^\perp\to\mathcal H_1\) is given by \(T_{12}=PT|_{\mathcal H_1^\perp}\), and \(T_{22}:\mathcal H_1^\perp\to\mathcal H_1^\perp\) is given by \(T_{22}=(I-P)T|_{\mathcal H_1^\perp}\). Moreover,
\[
\sigma(T)=\sigma(T_{11})\cup\sigma(T_{22}).
\]

Since \(\mathcal{H}_1\) is \(T\)-invariant, the operator
\(\widetilde{T}\) is well-defined. Now, define
\[
U:\mathcal{H}_1^\perp\to\mathcal{H}/\mathcal{H}_1\quad\mbox{by}\quad
U(\by)=[\by].
\]
It is routine to check that $U$ is bijective. Further, for
\(\by\in\mathcal{H}_1^\perp\),
\[
\|U(\by)\|=\inf_{\bh\in\mathcal H_1}\|\by+\bh\|=\|\by\|,
\]
since \(\by\perp\mathcal{H}_1\). Thus, \(U\) is unitary.

For \(\by\in\mathcal{H}_1^\perp\), we have \(T(\by)=T_{12}(\by)+T_{22}(\by)\), with \(T_{12}(\by)\in\mathcal{H}_1\), and hence
\[\widetilde{T}U(\by)=[T(\by)]=[T_{12}(\by)+T_{22}(\by)]=[T_{22}(\by)]=UT_{22}(\by).\]
Therefore, \(\widetilde{T}U=UT_{22}\), and consequently, \(T_{22}\) and \(\widetilde{T}\) are unitarily equivalent. Thus, \(\sigma(T_{22})=\sigma(\widetilde{T})\). Hence, we conclude that
\[
\sigma(T)=\sigma(T_{11})\cup\sigma(\widetilde{T}).
\]
\end{proof}   

\begin{theorem} 
\label{infinitesuptohyp} 
Let $\mathcal{H}$ be a complex Hilbert space and let $T\in BL(\mathcal{\mathcal{H}})$ be surjective. If there exist $0<c<1<d$ and $N\in\mathbb{Z}^+$ such that $\|T^n(\bx)\|\leq c^n\|\bx\|$ or $\|T^n(\bx)\|\geq d^n\|\bx\|$ for all $\bx\in \mathcal{H}$ and $n\geq N$, then $T$ is hyperbolic. 
\end{theorem}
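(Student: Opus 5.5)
The plan is to split $\mathcal{H}$ into a contracting part and an expanding part, and then use Lemma \ref{Spec_T11} to assemble the spectrum. Let
\[\mathcal{E}=\lbrace \bx\in\mathcal{H}:\|T^n(\bx)\|\to 0\rbrace,\]
which is a $T$-invariant subspace with $T(\mathcal{E})=\mathcal{E}$ by Lemma \ref{lemma_subspace}, since $T$ is surjective. For $n\geq N$ put
\[A_n=\lbrace \bx:\|T^n(\bx)\|\geq d^n\|\bx\|\rbrace,\qquad B_n=\lbrace \bx:\|T^n(\bx)\|\leq c^n\|\bx\|\rbrace.\]
These are closed cones, $A_n\cup B_n=\mathcal{H}$, and $A_n\cap B_n=\lbrace\ze\rbrace$.

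\emph{Step 1 (uniform contraction on $\mathcal{E}$).} If $\mathcal{E}\neq\lbrace\ze\rbrace$, then $\mathcal{S}_{\mathcal{E}}$ is connected, because $\mathcal{E}$ is a complex subspace of real dimension at least $2$. It is covered by the two disjoint relatively closed sets $\mathcal{S}_{\mathcal{E}}\cap A_n$ and $\mathcal{S}_{\mathcal{E}}\cap B_n$, so for each $n\geq N$ it lies entirely in one of them. Fix one $\by\in\mathcal{S}_{\mathcal{E}}$. Since $\|T^n(\by)\|\to0$, we have $\by\in B_n$ for all $n\geq N'$, hence $\mathcal{S}_{\mathcal{E}}\subseteq B_n$ for all $n\geq N'$. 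Thus $\|T^n|_{\mathcal{E}}\|\leq c^n$ for $n\geq N'$. By continuity this bound passes to $\overline{\mathcal{E}}$, so $\overline{\mathcal{E}}\subseteq\mathcal{E}$ and $\mathcal{E}$ is closed. Proposition \ref{spec1} then gives $\sigma(T_{11})\subseteq\mathbb{D}$, where $T_{11}=T|_{\mathcal{E}}$.

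\emph{Step 2 (expansion on the quotient).} Consider $\widetilde{T}$ on $\mathcal{H}/\mathcal{E}$. It is surjective because $T$ is, and $\mathcal{H}/\mathcal{E}$ is a Hilbert space. The aim is $\|\widetilde{T}^n[\bx]\|\geq d^n\|[\bx]\|$ for large $n$. Given $\bx\notin\mathcal{E}$ and $\be'\in\mathcal{E}$, write $\be'=T^n(\be)$ with $\be\in\mathcal{E}$ (using $T^n(\mathcal{E})=\mathcal{E}$). Then
\[T^n(\bx)-\be'=T^n(\bx-\be),\qquad \bx-\be\notin\mathcal{E}.\]
So it suffices to show that every $\bz\notin\mathcal{E}$ lies in $A_n$ for all $n\geq N''$, with $N''$ independent of $\bz$. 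Granting this, $\|T^n(\bx-\be)\|\geq d^n\|\bx-\be\|\geq d^n\|[\bx]\|$, and taking the infimum over $\be'$ gives the bound. Proposition \ref{bigger}, applied to $\widetilde{T}$, then yields $\sigma(\widetilde{T})\subseteq\mathbb{C}\setminus\overline{\mathbb{D}}$.

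\emph{Step 3 (conclusion).} Lemma \ref{Spec_T11} gives $\sigma(T)=\sigma(T_{11})\cup\sigma(\widetilde{T})$, which misses $\mathbb{T}$, so $T$ is hyperbolic.

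The main obstacle is the uniformity claim in Step 2: that $\bz\in B_n$ for some $n\geq N''$ forces $\bz\in\mathcal{E}$. My first attempt is an iteration argument. If $\bz\in B_n$, look at the block $T^{n}(\bz),T^{2n}(\bz),\dots$ and apply the dichotomy to $T^{kn}(\bz)$ at each step. A jump into $A_n$ at a later block should contradict the already small size of the orbit, since $\|T^{m}\|$ is bounded for $m\in[N,2N]$. Using this, I would show that $\|T^{kn}(\bz)\|\leq (c^n\|T^{N}\|\cdots)^k\|\bz\|$ decays, so $\bz\in\mathcal{E}$. If this fails, the fallback is to work with the connected set $\mathcal{S}_{\mathcal{H}}\setminus\mathcal{E}$ (connected when $\mathcal{E}$ has real codimension at least $2$) and repeat the connectedness argument of Step 1.
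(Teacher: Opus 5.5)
Your Steps 1 and 3 are sound, and the overall architecture matches the paper's. Your $\mathcal{E}$ is the paper's $\mathcal{H}_1$, and you use the same ingredients: $T(\mathcal{E})=\mathcal{E}$ from surjectivity, the infimum computation on $\mathcal{H}/\mathcal{E}$, Proposition \ref{bigger}, and Lemma \ref{Spec_T11}. Your connectedness proof that $\mathcal{E}$ is closed replaces the paper's identification $\mathcal{H}_1=\bigcap_{n\geq N}B_n$. It has the advantage of working even if the alternative in the hypothesis may depend on $n$.

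The gap is in Step 2. Everything there rests on the claim that every $\bz\notin\mathcal{E}$ lies in $A_n$ for all $n\geq N''$, with $N''$ independent of $\bz$, and you leave this unproved. The iteration you sketch does not deliver it as described. Put $w_k=\norm{T^{kn}(\bz)}$. The dichotomy applied to $T^{kn}(\bz)$ at time $n$ allows $w_{k+1}\geq d^n w_k$ right after $w_k\leq c^n w_{k-1}$. Smallness of $w_k$ only gives $w_{k+1}\leq\norm{T^n}w_k$, and $\norm{T^n}$ may exceed $d^n$. So a contract-then-expand pattern violates none of the block inequalities, and no contradiction arises. The fallback is also incomplete. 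Connectedness of $\mathcal{S}_{\mathcal{H}}\setminus\mathcal{E}$ only says that, for each $n$, this set lies entirely in $A_n$ or entirely in $B_n$. Unlike in Step 1, you have no vector outside $\mathcal{E}$ whose eventual behaviour is known, so you cannot tell which.

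What is missing depends on how the hypothesis is read. The paper reads it per vector over the whole tail: each $\bx$ satisfies $\norm{T^n(\bx)}\leq c^n\norm{\bx}$ for all $n\geq N$, or $\norm{T^n(\bx)}\geq d^n\norm{\bx}$ for all $n\geq N$. Under that reading your claim holds with $N''=N$ in one line, since $\bz\notin\mathcal{E}$ cannot take the first alternative. This is exactly how the paper treats $\bx+\bh\notin\mathcal{H}_1$.

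Under your weaker reading, where the alternative may depend on $n$, your own Step 1 argument closes the gap if you apply it to the whole sphere. For each $n\geq N$, either $\mathcal{S}_{\mathcal{H}}\subseteq A_n$ or $\mathcal{S}_{\mathcal{H}}\subseteq B_n$. If $\mathcal{S}_{\mathcal{H}}\subseteq B_m$ for a single $m\geq N$, then $\norm{T^m}\leq c^m<1$, so $r(T)\leq c$ and $\mathcal{E}=\mathcal{H}$. Otherwise $\mathcal{S}_{\mathcal{H}}\subseteq A_n$ for every $n\geq N$.

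A one-step version of your iteration also works. Suppose $\bz\neq\ze$ lies in $B_m$ and $j>m$ is the first index with $\bz\in A_j$. Then $d^j\norm{\bz}\leq\norm{T}\,\norm{T^{j-1}(\bz)}\leq\norm{T}c^{j-1}\norm{\bz}$, which is impossible once $m\geq\log(\norm{T}/c)/\log(d/c)$.

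Finally, the sphere argument shows that the hypothesis forces $\mathcal{E}=\lbrace\ze\rbrace$ or $\mathcal{E}=\mathcal{H}$. So the spectral radius formula and Proposition \ref{bigger}, applied to $T$ itself, already finish the proof, without the quotient or Lemma \ref{Spec_T11}.
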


\begin{proof}
Assume that there exist $0<c<1<d$ and $N\in\mathbb{Z}^+$ such that 
\[\|T^n(\bx)\|\leq c^n\|\bx\| \mbox{ or } \|T^n(\bx)\|\geq d^n\|\bx\| \mbox{ for all } \bx\in \mathcal{H} \mbox{ and } n\geq N.\] 
Now, consider
\begin{align*}
&\mathcal{H}_1=\lbrace \bx\in \mathcal{H} : \norm{T^n(\bx)}\to \ze\mbox{ as }n\to \infty\rbrace,\\
\mbox{and}\quad &\mathcal{H}_2=\lbrace \bx\in\mathcal{H} : \|T^n(\bx)\|\leq c^n\|\bx\|\mbox{ for all }n\geq N\rbrace.
\end{align*}
Clearly, $\mathcal{H}_2\subseteq \mathcal{H}_1$. Further, if $\norm{ T^n(\bx)}\to \ze$ as $n\to \infty$, then it cannot happen that $\norm{T^n(\bx)}\geq d^n\norm{\bx}$ for all $n\geq N$ and hence, we have $\|T^n(\bx)\|\leq c^n\|\bx\|$ for all $n\geq N$. Therefore, $\mathcal{H}_1\subseteq \mathcal{H}_2$ and hence, $\mathcal{H}_1=\mathcal{H}_2$.
Since $\mathcal{H}_2$ is a countable intersection of the closed sets
\[\mathcal{E}_n=\lbrace \bx\in\mathcal{H} : \|T^n(\bx)\|\leq c^n\|\bx\|\rbrace,\]
it is closed. Therefore, by Lemma \ref{lemma_subspace}, \(\mathcal{H}_1\) is a closed \(T\)-invariant subspace of \(\mathcal{H}\), with \(T(\mathcal{H}_1)=\mathcal{H}_1\). Hence, by Lemma \ref{Spec_T11},
\[\sigma(T)=\sigma(T_{11})\cup \sigma(\widetilde{T}),\]
the notations are as stated in the lemma. We show that $\sigma(T_{11})\subseteq D$ and $\sigma(\widetilde{T})\subseteq \mathbb{C}\setminus \overline{D}$ to prove $T$ is hyperbolic. 

Since $T_{11}=T|_{\mathcal{H}_1}$, by Proposition \ref{spec1} and the given hypothesis, we have $\sigma(T_{11})\subseteq D$. 

We now prove that there exists $d>1$ and $N\in\mathbb{Z}^+$ such that $\norm{\widetilde{T}^n([\bx])}\geq d^n\norm{[\bx]}$, for all $n\geq N$. Then, it follows from Proposition \ref{bigger} that $\sigma(\widetilde{T})\subseteq \mathbb{C}\setminus\overline{D}$. Since \(T(\mathcal{H}_1)=\mathcal{H}_1\), we obtain
\[\widetilde{T}^n([\bx])=[T^n(\bx)]=T^n(\bx)+\mathcal{H}_1=\lbrace T^n(\bx)+T^n(\bh):\bh\in\mathcal{H}_1\rbrace.\]   
Therefore, 
\[\norm{\widetilde{T}^n([\bx])}=\inf\limits_{\bh\in\mathcal{H}_1} \norm{T^n(\bx)+T^n(\bh)}=\inf\limits_{\bh\in \mathcal{H}_1} \norm{T^n(\bx+\bh)}.\]  
If $\bx\in \mathcal{H}_1$, $\norm{\widetilde{T}^n([\bx])}=0\geq d^n\norm{[\bx]}$. If $\bx\notin \mathcal{H}_1$, then $\bx+\bh\notin \mathcal{H}_1$ for any $\bh\in \mathcal{H}_1$. By the hypothesis, there exists $d>1$ and $N\in\mathbb{Z}^+$ such that $\norm{T^n(\bx+\bh)}\geq d^n\norm{\bx+\bh}$ for all \(\bh\in\mathcal{H}_1\). Taking infimum over $\bh\in\mathcal{H}_1$, we obtain 
\begin{align*}
\inf\limits_{\bh\in\mathcal{H}_1}\norm{T^n(\bx+\bh)} & \geq d^n\inf\limits_{\bh\in\mathcal{H}_1}\norm{\bx+\bh},\\
\mbox{that is,}\quad \norm{\widetilde{T}^n([\bx])} & \geq d^n\norm{[\bx]}.
\end{align*}
This completes the proof.  
\end{proof} 

\begin{corollary}
\label{supertohyperbolic} 
Let $\mathcal{H}$ be a complex Hilbert space and let $T\in BL(\mathcal{\mathcal{H}})$ be surjective. If $T$ is super asymptotically expansive, then $T$ is hyperbolic.  
\end{corollary}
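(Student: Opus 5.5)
The plan is to reduce Corollary \ref{supertohyperbolic} to Theorem \ref{infinitesuptohyp}. It suffices to show that a super asymptotically expansive operator $T$ admits constants $0<c<1<d$ and $N\in\mathbb{Z}^+$ such that, for every $\bx\in\mathcal{H}$ and every $n\ge N$, we have $\|T^n(\bx)\|\le c^n\|\bx\|$ or $\|T^n(\bx)\|\ge d^n\|\bx\|$. Theorem \ref{infinitesuptohyp} then applies directly, since $T$ is surjective by hypothesis, and gives hyperbolicity.

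To produce the exponential dichotomy, I would apply the definition of super asymptotic expansivity with $\varepsilon=\tfrac12$ and $M=2$. This gives a single $K=N(\tfrac12,2)$ such that every unit vector $\bz$ satisfies either $\|T^n\bz\|<\tfrac12$ for all $n\ge K$, or $\|T^n\bz\|>2$ for all $n\ge K$. By homogeneity, the same alternative holds for arbitrary nonzero $\bx$ with $\|\bx\|$ inserted as a factor. Call $\bx$ \emph{contracting} in the first case and \emph{expanding} in the second.

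The key step is iteration. The dichotomy must be preserved along the orbit, i.e. if $\bx$ is contracting then so is $T^K\bx$. Since $\|T^{K}\bx\|<\tfrac12\|\bx\|$, the vector $\by=T^K\bx$ either satisfies $\|T^n\by\|<\tfrac12\|\by\|$ for $n\ge K$, or $\|T^n\by\|>2\|\by\|$ for $n\ge K$. In the latter case $\|T^{n+K}\bx\|>2\|T^K\bx\|$ for all $n\ge K$. This is not immediately contradictory, because $\|T^K\bx\|$ could be tiny. I would resolve it by using instead the stronger form that the definition gives for every $\varepsilon$ and $M$.

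Concretely, fix a vector $\bx$ that is contracting for $(\tfrac12,2)$. Then $\|T^n\bx\|$ is bounded (by $\max_{n\le K}\|T^n\|\,\|\bx\|$), so $\bx$ cannot be expanding for any $(\varepsilon,M)$ with $M$ large. Hence $\|T^n\bx\|\to 0$ uniformly over unit contracting vectors. Similarly, $\|T^n\bx\|\to\infty$ uniformly over unit expanding vectors. Thus the unit sphere splits into $\mathcal{C}$ and $\mathcal{E}'$, with $\sup_{\mathcal{C}}\|T^n\|\to0$ and $\inf_{\mathcal{E}'}\|T^n\cdot\|\to\infty$.

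Next, pick $m$ with $\|T^m\bz\|\le \tfrac12$ on $\mathcal{C}$ and $\|T^m\bz\|\ge 2$ on $\mathcal{E}'$. The set $\mathcal{C}$ is invariant under $\bz\mapsto T\bz/\|T\bz\|$ when $T\bz\ne\ze$, because $T\bz$ still has a bounded orbit. Likewise $\mathcal{E}'$ is invariant, since an orbit tending to infinity stays so after shifting. Submultiplicativity along blocks of length $m$ then gives $\|T^{jm+r}\bz\|\le C\,2^{-j}$ on $\mathcal{C}$ and $\|T^{jm+r}\bz\|\ge C^{-1}2^{j}$ on $\mathcal{E}'$, with $C$ depending on $\max_{r<m}\|T^r\|$. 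For the lower bound on $r$ extra steps I would use that $\|T^{r}\bz\|\ge \|T^{m}T^r\bz\|/\|T^m\|$ combined with invariance. Choosing $c$ slightly above $2^{-1/m}$, $d$ slightly below $2^{1/m}$, and $N$ large absorbs $C$.

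The main obstacle I expect is this last bookkeeping. One must ensure that the lower bound on expanding vectors is uniform at intermediate times $jm+r$ and that $T\bz\ne\ze$ on $\mathcal{E}'$. The second point is automatic, since expanding orbits are unbounded. If the block argument for intermediate times gets messy, the fallback is to note that Theorem \ref{infinitesuptohyp} only uses the dichotomy to characterize $\mathcal{H}_1$, to bound $\sigma(T_{11})$ via $\|T^{n}\|$ for $n$ along the subsequence $jm$, and to bound the quotient via Proposition \ref{bigger}. Spectral radius bounds only need a subsequence, so the argument can be run with $T^m$ in place of $T$, using $\sigma(T^m)=\sigma(T)^m$.
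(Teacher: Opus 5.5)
Your proposal is correct and follows the paper's route. The paper states the corollary without proof as a consequence of Theorem \ref{infinitesuptohyp}, and you supply the step it leaves implicit: super asymptotic expansivity, with the alternative fixed for all $n\ge N$, gives the uniform dichotomy $\|T^n\bx\|\le c^n\|\bx\|$ on $\{\bx : T^n\bx\to\ze\}$ and $\|T^n\bx\|\ge d^n\|\bx\|$ off it.

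The only loose point is the lower bound at intermediate times. There, $\|T^r\bz\|\ge\|T^{m+r}\bz\|/\|T^m\|$ combined with invariance is circular. You can close it in any of three ways: use $\|T^{(j+1)m}\bz\|\le\|T^{m-r}\|\,\|T^{jm+r}\bz\|$; use the bound $\|T^{n}\bz\|>2$ for all $n\ge K$, which the definition gives directly on expanding unit vectors; or use your fallback of applying Theorem \ref{infinitesuptohyp} to $T^m$ together with $\sigma(T^m)=\sigma(T)^m$.
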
 

Combining Corollary \ref{hyperbolictosuper} and Corollary \ref{supertohyperbolic} we obtain the following important result. 

\begin{theorem}
\label{superequivalenthyperbolic} 
Let $\mathcal{H}$ be a complex Hilbert space and let $T\in BL(\mathcal{\mathcal{H}})$ be surjective. Then, $T$ is super asymptotically expansive if and only if $T$ is hyperbolic. 
\end{theorem}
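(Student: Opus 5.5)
The theorem should follow by combining the two directions already established. I would not reprove the spectral facts. The work is in checking that the hypotheses line up and in bridging the gap between the qualitative definition of super asymptotic expansivity and the exponential dichotomy of Theorem \ref{infinitesuptohyp}.

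\emph{Hyperbolic implies super asymptotically expansive.} Since a Hilbert space is a Banach space, Theorem \ref{hyptosup} applies. It gives constants $0<c<1<d$ and $N_0$ such that every $\bx$ satisfies, for all $n\geq N_0$, either $\|T^n(\bx)\|\leq c^n\|\bx\|$ or $\|T^n(\bx)\|\geq d^n\|\bx\|$. Now fix $\varepsilon,M>0$ and choose $N\geq N_0$ with $c^N<\varepsilon$ and $d^N>M$. Then every $\bz\in\mathcal{S}_{\mathcal{H}}$ has either $\|T^n(\bz)\|<\varepsilon$ for all $n\geq N$ or $\|T^n(\bz)\|>M$ for all $n\geq N$. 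This is Corollary \ref{hyperbolictosuper}, and surjectivity is not used in this direction.

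\emph{Super asymptotically expansive and surjective implies hyperbolic.} This is Corollary \ref{supertohyperbolic}, and the plan is to reduce it to Theorem \ref{infinitesuptohyp}. The key intermediate step is to extract an exponential dichotomy from the definition.

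Apply the definition with $\varepsilon=\tfrac12$ and $M=2$ to get some $N_1$. Each unit vector $\bz$ is then either ``small'', meaning $\|T^n\bz\|<\tfrac12$ for all $n\geq N_1$, or ``large'', meaning $\|T^n\bz\|>2$ for all $n\geq N_1$.

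For a small $\bz$, let $\bz'=T^{N_1}\bz/\|T^{N_1}\bz\|$. Its orbit is bounded, so by the dichotomy $\bz'$ must also be small. Iterating this and using linearity should give a geometric decay bound of the form $\|T^{kN_1}\bz\|\leq C 2^{-k}$. I would then interpolate using $\|T\|$ to obtain $\|T^n\bz\|\leq c^n$ for $n$ large, with $c<1$. The large case is handled symmetrically to obtain growth at rate $d^n$ with $d>1$.

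The delicate point is a vector with $T^{N_1}\bz=\ze$, for which the normalization is undefined. Such a $\bz$ is trivially in the decay case, so it causes no real difficulty.

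Once the exponential dichotomy is in hand, Theorem \ref{infinitesuptohyp} applies directly and gives hyperbolicity.

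\emph{Main obstacle.} The hardest step is making the dichotomy uniform in $\bz$ with a single threshold $N$. This requires the constants in the iteration to depend only on $N_1$ and $\|T\|$, and that uniformity is exactly what the definition of super asymptotic expansivity provides. Once both directions are established, the equivalence follows immediately.
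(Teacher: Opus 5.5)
You follow the paper's own route (Corollary~\ref{hyperbolictosuper} plus Corollary~\ref{supertohyperbolic}), but the forward half cannot be carried out. It rests on Theorem~\ref{hyptosup}, which is false whenever $\sigma(T)$ meets both $\mathbb{D}$ and $\mathbb{C}\setminus\overline{\mathbb{D}}$. The Riesz decomposition gives decay on one summand and growth on the other separately, but nothing uniform for mixed vectors whose unstable component is tiny.

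Concretely, let $T(x_1,x_2)=(2x_1,\tfrac12x_2)$ on the Hilbert space $\mathbb{C}^2$. It is invertible with $\sigma(T)=\{2,\tfrac12\}$. Take $\bz_\eta=(\eta,\sqrt{1-\eta^2})$ with $0<\eta<1$. Since $\|T^n\bz_\eta\|\geq 2^n\eta\to\infty$, only the ``large'' alternative can hold. Yet $\|T^N\bz_\eta\|^2=4^N\eta^2+4^{-N}(1-\eta^2)<4$ once $4^N\eta^2<1$. So for $\varepsilon=\tfrac12$, $M=2$ no $N$ works: $T$ is surjective and hyperbolic but not super asymptotically expansive. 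The ``if'' direction of the statement is therefore false, and no argument can close this gap. The uniformity in $\bz$ that you flag as the main obstacle is exactly what breaks, but in this direction rather than the other.

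More generally, fix $\varepsilon<M$ and the corresponding $N$. The small and the large unit vectors form two disjoint sets covering $\mathcal{S}_{\mathcal{H}}$, and both are closed: a limit of small vectors has $\|T^n\bz\|\leq\varepsilon<M$ for $n\geq N$, so it is not large, and symmetrically. The unit sphere of a complex normed space is connected, so one of the two sets is empty. Hence super asymptotic expansivity forces either uniform decay of all unit vectors or uniform growth of all of them.

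Your reverse direction is sound, and it supplies a step the paper leaves implicit. Corollary~\ref{supertohyperbolic} is presented as following from Theorem~\ref{infinitesuptohyp}, whose hypothesis is an exponential dichotomy rather than the qualitative definition. Your iteration produces that dichotomy with constants depending only on $N_1$ and $\|T\|$. It gives $\|T^{kN_1}\bz\|<2^{-k}$ in the small case and $>2^k$ in the large case. You then interpolate with the constant $\max(1,\|T\|)^{N_1}$, stepping forward to the next multiple of $N_1$ for the lower bound.

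One point needs care. A bounded orbit does not by itself rule out the $(\tfrac12,2)$-large alternative for $\bz'$. You need that large vectors satisfy $\|T^n\bz\|\to\infty$, which follows by applying the definition with $\varepsilon=\tfrac12$ and larger $M$.

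Together with the connectedness remark, what is actually provable for surjective $T$ is this: $T$ is super asymptotically expansive if and only if $\sigma(T)\subset\mathbb{D}$ or $\sigma(T)\subset\mathbb{C}\setminus\overline{\mathbb{D}}$.
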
 

By the stability of the semi-Fredholm index, it is well established that the class of hyperbolic operators fails to be dense in the operator norm topology. Specifically, sufficiently small norm perturbations cannot resolve a non-zero index on the unit circle, thereby yielding open sets that contain no hyperbolic operators. This behavior parallels the well-documented norm-closure obstructions associated with invertible operators (see \cite{C}). Consequently, investigations into the approximation of general bounded linear operators by hyperbolic ones must naturally proceed within the context of weaker topologies, such as the strong operator topology (SOT). 
\medskip

While the SOT-density of hyperbolic operators is widely recognized as a direct corollary of finite-rank approximation techniques, an explicit formalization of this result seems to be not appearing in the standard literature. Therefore, for the sake of completeness, we include a formal demonstration of this property below.
\medskip 

Here, we use Remark \ref{spectrum_fr} and Lemma \ref{finite-rank} to present the proof of SOT-density on infinite dimensional complex Banach space. The proof is similar if the space is finite dimensional.  

\begin{remark}
\label{spectrum_fr}
Let \(\mathcal{B}\) be a complex Banach space and \(F\in BL(\mathcal{B})\) is a finite-rank operator. Then,
\(\sigma(F)=\sigma_p(F)\)
is a finite subset of \(\cn\) containing $0$.     
\end{remark}

\begin{lemma}\label{finite-rank}
Let \(\mathcal{B}\) be a complex Banach space and \(T\in BL(\mathcal{B})\). Then, there exists a sequence \((F_n)_{n\in\mathbb{Z}^+}\) in $BL(\mathcal{B})$ of finite-rank operators that converges to \(T\) in SOT. 
\end{lemma}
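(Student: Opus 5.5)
The plan is to reduce everything to approximating the identity. Suppose we can find finite-rank operators $P_n\in BL(\mathcal{B})$ with $P_n(\bx)\to\bx$ for every $\bx\in\mathcal{B}$. Then we set $F_n=P_nT$. Each $F_n$ has finite rank, since its range lies in the range of $P_n$. For each fixed $\bx$ we get $\norm{F_n(\bx)-T(\bx)}=\norm{P_n(T\bx)-T\bx}\to 0$ by applying the approximation property to the vector $T\bx$. So the proof will consist of two steps: constructing the sequence $(P_n)$, and then making this one-line verification. No spectral information about $T$ is needed here; Remark \ref{spectrum_fr} only enters later, when the $F_n$ are perturbed to hyperbolic operators.

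For the construction of $(P_n)$, I would first treat the model case where $\mathcal{B}$ has a Schauder basis $(\be_k)$ with coordinate functionals $\be_k^*$. Here I take the canonical partial-sum projections $P_n(\bx)=\sum_{k=1}^n\be_k^*(\bx)\be_k$. These are finite-rank and converge to $\bx$ by the definition of a basis. This covers $\ell^p$ and $c_0$, and with an orthonormal basis every separable Hilbert space, which are the spaces used in the paper's examples. More generally, I would try an increasing sequence of finite-dimensional subspaces $E_1\subseteq E_2\subseteq\cdots$ with dense union. Choose projections $P_n$ onto $E_n$ with $\sup_n\norm{P_n}<\infty$. Then $P_n\bx\to\bx$ follows by a $3\varepsilon$-argument: approximate $\bx$ by some $\by\in E_m$, use $P_n\by=\by$ for $n\ge m$, and use the uniform bound on $\norm{P_n}$ to control $\norm{P_n(\bx-\by)}$.

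The main obstacle is exactly this second step for a completely arbitrary complex Banach space. If $F_n\to T$ in SOT, then $\sup_n\norm{F_n}<\infty$ by the uniform boundedness principle. Moreover, $\overline{T(\mathcal{B})}$ must then lie in the closure of a countable union of finite-dimensional subspaces, so it is separable. Taking $T=I$, a non-separable space (e.g.\ $\ell^\infty$) or a separable space failing the bounded approximation property (Enflo--Szankowski) admits no such sequence. So I do not expect the second step to go through as stated. I would therefore first try to read the paper's definition of SOT-density (with the garbled quantifier ``$n\geq\mathbb{Z}^+$'') in the way the later density theorem actually uses it. If the statement must be kept verbatim, I would prove it under the standing hypothesis that $\mathcal{B}$ is separable with the bounded approximation property, via the projections above, and flag that the general case is precisely where the construction, and in fact the statement itself, breaks down.
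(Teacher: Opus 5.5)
Your reduction and your Schauder-basis case are essentially the paper's proof. The paper also uses the basis projections $P_n$ onto $\mathcal{B}_n=\spn\{\bb_1,\dots,\bb_n\}$, but sets $F_n=TP_n$ rather than your $P_nT$. Both choices work: they are finite-rank and converge strongly to $T$ (by continuity of $T$, respectively by applying $P_n\to I$ to $T\bx$). Both also keep $F_n-P_n$ finite-rank, which the later density theorem uses. Your version also makes two repairs the paper's write-up needs. First, $\bigcup_n\mathcal{B}_n$ is only dense in $\mathcal{B}$, not equal to it. Second, $P_n\bx\to\bx$ requires the canonical complement $\overline{\spn}\{\bb_k:k>n\}$, where the projections are uniformly bounded by the basis constant; an arbitrary complement $\mathcal{B}_n'$ does not give this.

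Your objection to the general statement is correct, and it applies equally to the paper. Its proof simply begins with ``a Schauder basis $\{\bb_1,\bb_2,\ldots\}$,'' which a general Banach space need not have: no non-separable space has one, and neither does Enflo's separable space. Both of your obstructions are valid. Banach--Steinhaus gives $\sup_n\norm{F_n}<\infty$, and $T\bx=\lim_n F_n\bx$ lies in the closure of $\bigcup_n F_n(\mathcal{B})$. Taking $T=I$, the lemma therefore holds exactly when $\mathcal{B}$ is separable with the bounded approximation property.

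One small correction to your fallback. Under that hypothesis the BAP hands you finite-rank operators $R_n$ with $R_n\bx\to\bx$, not uniformly bounded projections onto nested subspaces $E_n$. You do not need projections, though: your first paragraph only uses $F_n=R_nT$.

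If the SOT closure is read with nets instead of sequences, the statement holds on every Banach space. Given $\bx_1,\dots,\bx_k$, a bounded projection $P$ onto their span makes $TP$ finite-rank and equal to $T$ at each $\bx_i$. As a statement about sequences, the lemma needs a hypothesis such as the existence of a Schauder basis. That is what both your argument and the paper's actually prove, and so it is also what the paper's density theorem, built on these $F_n$, establishes.
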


\begin{proof}
Let \(\{\bb_1, \bb_2, \ldots\}\) is a Schauder basis. For \(n\in\zn^+\), let
\[\mathcal{B}_n=\spn (\{\bb_1,\bb_2,\ldots,\bb_n\}).\]
It is easy to see that \(\mathcal{B}_n\subset\mathcal{B}_{n+1}\) and \(\bigcup\limits_{n=1}^{\infty}\mathcal{B}_n=\mathcal{B}\). Further, for each \(n\), \(\mathcal{B}\) can be written as \(\mathcal{B}_n\oplus\mathcal{B}_n'\). Therefore, every \(\bx\in\mathcal{B}\) can be written as \(\bx=\bx_n+\bx_n'\) uniquely for some \(\bx_n\in\mathcal{B}_n\) and \(\bx_n'\in\mathcal{B}_n'\). Define the projections \(P_n:\mathcal{B}\to\mathcal{B}\) by \(P_n(\bx)=\bx_n\), that is, \(P_n|_{\mathcal{B}_n}=I_{\mathcal{B}_n}\). Therefore, it follows that
\[P_n(\bx)\to\bx\mbox{ as }n\to\infty.\]
In other words, \(P_n\to I\) in SOT.

Let us now define the operators \(F_n=TP_n\) for \(n\in\zn^+\). It is easy to see that each \(F_n\) is a finite-rank operator. Moreover, \(F_n\to T\) in SOT.
\end{proof}

\begin{theorem}
Let \(\mathcal{B}\) be a complex Banach space. Then, the set of all super asymptotically expansive operators on \(\mathcal{B}\) is SOT dense in \(BL(\mathcal{B})\).
\end{theorem}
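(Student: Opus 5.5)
Given $T\in BL(\mathcal{B})$, the plan is to approximate $T$ in SOT by hyperbolic operators and then invoke Corollary \ref{hyperbolictosuper}, which shows that hyperbolic operators are super asymptotically expansive. By Lemma \ref{finite-rank}, there is a sequence of finite-rank operators $F_n = TP_n$ converging to $T$ in SOT. By Remark \ref{spectrum_fr}, each $\sigma(F_n)$ is a finite subset of $\mathbb{C}$ containing $0$.

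For each $n$ I will perturb $F_n$ by a small scalar multiple of the identity. Because $\sigma(F_n)$ is finite, the set $\{\lambda\in\mathbb{C} : (\sigma(F_n)+\lambda)\cap\mathbb{T}\neq\emptyset\} = \bigcup_{\mu\in\sigma(F_n)}(\mathbb{T}-\mu)$ is a finite union of circles. So I can choose $\lambda_n\in\mathbb{C}$ with $|\lambda_n|<1/n$ that avoids all of these circles; for instance, a small positive real number will do, since each circle meets the real axis in at most two points. Set $G_n=F_n+\lambda_n I$. The spectral mapping theorem, or simply translation of the spectrum, gives $\sigma(G_n)=\sigma(F_n)+\lambda_n$, which is disjoint from $\mathbb{T}$. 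Hence $G_n$ is hyperbolic, and by Corollary \ref{hyperbolictosuper} it is super asymptotically expansive. Note that $G_n$ is no longer finite-rank, but this is irrelevant.

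For convergence, for each $\bx\in\mathcal{B}$ we have
\[\|G_n\bx - T\bx\|\le \|F_n\bx-T\bx\|+|\lambda_n|\,\|\bx\|\to 0,\]
so $G_n\to T$ in SOT, which gives the density claim.

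The main obstacle is Lemma \ref{finite-rank} itself. It assumes a Schauder basis, and it requires the projections $P_n$ to be uniformly bounded, which is the basis constant property, so that $P_n\to I$ pointwise. A general Banach space need not have a Schauder basis, or even the approximation property. To handle this, I would weaken the requirement to a \emph{net} of finite-rank operators converging in SOT, and read SOT density in the topological sense. For each finite set $\{\bx_1,\dots,\bx_k\}$ and $\varepsilon>0$, take a bounded projection $P$ onto the finite-dimensional span of $\{\bx_1,\dots,\bx_k\}$; such a projection exists by Hahn--Banach. Then $TP$ is finite-rank and agrees with $T$ on each $\bx_i$, and the $\lambda$-perturbation above finishes the argument. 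If sequences are required, I would restrict to separable $\mathcal{B}$ with a basis, as the paper's lemma implicitly does.
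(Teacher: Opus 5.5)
\textbf{There is a gap in the last step, and the paper's proof has the same one.} Your route is the paper's: SOT-approximate $T$ by finite-rank operators $F_n=TP_n$, shift by a small scalar so the finite spectrum misses $\mathbb{T}$, and conclude with Corollary~\ref{hyperbolictosuper}. The paper uses $(1+\varepsilon_n)I+(F_n-P_n)$; your $F_n+\lambda_nI$ is simpler. Your criticism of the Schauder-basis assumption in Lemma~\ref{finite-rank} is fair, and your net argument with bounded projections onto finite-dimensional subspaces correctly repairs the finite-rank approximation.

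The step that fails is ``$G_n$ is hyperbolic, hence super asymptotically expansive''. Corollary~\ref{hyperbolictosuper} is false for every hyperbolic operator whose spectrum meets both $\mathbb{D}$ and $\mathbb{C}\setminus\overline{\mathbb{D}}$.
\begin{itemize}
\item The unit sphere of a complex normed space is connected, so for each $n$ the set $\{\norm{S^n\bz}:\bz\in\mathcal{S}_{\mathcal{B}}\}$ is an interval.
\item Suppose some unit $\bx$ has $\norm{S^n\bx}\to0$ and some unit $\by$ has $\norm{S^n\by}\to\infty$. Then for every large $n$ there is a unit $\bz$ with $\norm{S^n\bz}=1$.
\item So the definition fails with $\varepsilon=\tfrac{1}{2}$, $M=2$, whatever $N$ is.
\end{itemize}
The simplest instance is $\mathrm{diag}(\tfrac{1}{2},2)$ on $\mathbb{C}^2$. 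The proof of Theorem~\ref{hyptosup} only controls vectors lying in $\mathcal{B}_1$ or in $\mathcal{B}_2$, never sums of the two.

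Your $G_n$ are typically of this mixed type. For $T=2I$ on $\ell^2(\mathbb{Z}^+)$ you get $G_n\be_1=(2+\lambda_n)\be_1$ and $G_n\be_{n+1}=\lambda_n\be_{n+1}$, so no $G_n$ is super asymptotically expansive. The paper's $T_n$ fail in the same way, e.g.\ for $T\be_1=\tfrac{1}{2}\be_1$ and $T\be_k=2\be_k$ for $k\ge 2$.

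\textbf{How to repair it.} What does hold is that any $S$ with $\sigma(S)\subseteq\mathbb{D}$ is super asymptotically expansive, since $\norm{S^n}\to0$. So approximate by such operators instead.
\begin{itemize}
\item \emph{Topological density, infinite-dimensional $\mathcal{B}$.} Square-zero operators suffice. Given linearly independent $\bx_1,\dots,\bx_m$ and $\eta>0$, pick $\by_i$ with $\norm{\by_i-T\bx_i}<\eta$ such that $\bx_1,\dots,\bx_m,\by_1,\dots,\by_m$ are linearly independent; this is possible because $\dim\mathcal{B}=\infty$. Let $Q$ be a bounded projection onto their span, and put $N=N_0Q$ with $N_0\bx_i=\by_i$ and $N_0\by_i=\ze$. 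Then $N^2=0$ and $\norm{N\bx_i-T\bx_i}<\eta$.
\item \emph{Sequential density, as in the paper's definition.} The approximants must be uniformly bounded, and square-zero operators cannot be near $2I$. Indeed, if $N^2=0$, $\norm{N}\le C$ and $\norm{\bx}=1$, then $\norm{N\bx-2\bx}\ge 4/(2+C)$. On $\ell^2$ you can instead take the compression of $T$ to $\spn\{\be_1,\dots,\be_n\}$, tensored with a long nilpotent shift and suitably embedded. This operator is nilpotent with norm at most $\norm{T}$.
\item \emph{Finite dimension at least $2$.} Here the statement is simply false. SOT is the norm topology, and every operator close enough to $\mathrm{diag}(\tfrac{1}{2},2)$ on $\mathbb{C}^2$ has eigenvalues on both sides of $\mathbb{T}$. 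By the argument above, none of them is super asymptotically expansive.
\end{itemize}
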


\begin{proof}
Let \(T\in BL(\mathcal{B})\). We construct a sequence of super asymptotically expansive operators that converges to \(T\) in SOT. Define the operators \(P_n\) and \(F_n\) as in Lemma \ref{finite-rank}. Clearly, \(F_n-P_n\) is a finite-rank operator and, by Remark \ref{spectrum_fr},
\[\sigma(F_n-P_n)=\sigma_p(F_n-P_n)=\{\lambda_1,\ldots,\lambda_k\}\]
for some \(\lambda_i\in\cn\) and \(k\in\zn^+\). Let \((\varepsilon_n)\) be a sequence of real numbers such that \(\varepsilon_n\to 0\) as \(n\to\infty\). For each \(n\), define the operators
\[T_n=(1+\varepsilon_n)I+(F_n-P_n).\]
Clearly,
\[\sigma(T_n)=\{1+\varepsilon_n+\lambda_i:i=1,2,\ldots,k\}.\]
One may choose \(\varepsilon_n\) in such a way that \(\sigma(T_n)\cap\mathbb{T}=\emptyset\), that is, each \(T_n\) is hyperbolic. Further,
\[T_n(\bx)=(1+\varepsilon_n)I(\bx)+TP_n(\bx)-P_n(\bx)\to T(\bx)\quad \mbox{as }n\to\infty.\]
This completes the proof.  
\end{proof}

\section{Construction of Operators in Specific Classes}

In this section, we make use of the fundamental operations of \(BL(\mathcal{X})\) to obtain new operators in specific classes.

\begin{proposition}
\label{cT}
Let \(\mathcal{X}\) be a normed space, $T \in BL(\mathcal{X})$, and \(c\in\mathbb{T}\). Then, the following hold: 
\begin{itemize}
\item [(a)] $T$ is asymptotically expansive if and only if $cT$ is so. \item [(b)] $T$ is strong asymptotically expansive if and only if $cT$ is so. 
\item [(c)] $T$ is super asymptotically expansive if and only if $cT$ is so.   
\end{itemize}
\end{proposition}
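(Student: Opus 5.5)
The whole argument rests on one identity. Since \(c\in\mathbb{T}\) is a scalar, it commutes with \(T\), so
\[
(cT)^n(\bx)=c^nT^n(\bx)\quad\text{and}\quad \norm{(cT)^n(\bx)}=|c|^n\norm{T^n(\bx)}=\norm{T^n(\bx)}
\]
for all \(\bx\in\mathcal{X}\) and \(n\in\mathbb{N}\). Each of the three notions is defined purely through the sequence of norms \(\big(\norm{T^n(\bz)}\big)_{n\in\mathbb{N}}\) for vectors \(\bz\) in \(\mathcal{X}\) or in \(\mathcal{S}_{\mathcal{X}}\). The plan is therefore to show that every defining condition for \(T\) is literally the same condition for \(cT\).

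Moreover, \(c^{-1}=\bar c\in\mathbb{T}\) and \(T=\bar c\,(cT)\). So it is enough to prove one implication in each of (a), (b), (c), for instance ``\(T\) has the property \(\Rightarrow\) \(cT\) has the property''. The reverse implication then follows by applying the same statement to \(cT\) and the scalar \(\bar c\).

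For (a), take an asymptotic expansivity constant \(\delta\) of \(T\). Let \(\bz\) satisfy \(\norm{(cT)^n\bz}\le\delta\) for all \(n\). By the identity, \(\norm{T^n\bz}\le\delta\) for all \(n\), hence \(\norm{T^n\bz}\to0\), and therefore \(\norm{(cT)^n\bz}\to0\). The same \(\delta\) thus works for \(cT\). Alternatively, one can invoke Proposition \ref{char_asymptotic}, because both of its alternatives are statements about norm sequences, and these sequences coincide for \(T\) and \(cT\).

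For (b), the argument is the same. Keep the strong asymptotic expansivity constant \(\delta\) and, for each \(\varepsilon>0\), the same \(N(\varepsilon)\). Both the hypothesis \(\norm{(cT)^n\bz}\le\delta\) for all \(n\) and the conclusion \(\norm{(cT)^n\bz}\le\varepsilon\) for \(n\ge N\) translate verbatim into the corresponding statements for \(T\).

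For (c), given \(\varepsilon,M>0\), take the \(N(\varepsilon,M)\) supplied for \(T\). For every \(\bz\in\mathcal{S}_{\mathcal{X}}\), the dichotomy ``\(\norm{T^n\bz}<\varepsilon\) for all \(n\ge N\), or \(\norm{T^n\bz}>M\) for all \(n\ge N\)'' carries over to \(cT\) unchanged.

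No step is a genuine obstacle. The only points to check are, first, that \(|c|=1\) is used essentially, so that the norms are equal and not merely comparable. Second, \(\mathbb{T}\) is closed under inversion, which is what makes the reverse implications immediate. Third, in the real case \(\mathbb{K}=\mathbb{R}\) we have \(c=\pm1\), and the same argument applies without change.
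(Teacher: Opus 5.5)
Your proposal is correct and matches the paper's proof: the paper also rests solely on the identity $\norm{(cT)^n(\bx)}=|c|^n\norm{T^n(\bx)}=\norm{T^n(\bx)}$ and then concludes directly from the definitions. Your detour through $\bar c$ for the reverse implications is harmless but unnecessary, because the norm identity is already symmetric in $T$ and $cT$.
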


\begin{proof}
Since \(|c|=1\), we obtain
\[\norm{(cT)^n(\bx)}=|c|^n\norm{T^n(\bx)}=\norm{T^n(\bx)}\]
for all \(\bx\in X\) and \(n\in\mathbb{N}\). Therefore, the result follows from the definitions.
\end{proof}

\begin{proposition}\label{spectrum}
Let \(\mathcal{B}\) be a complex Banach space, \(T,P\in BL(\mathcal{B})\) and \(P\) be invertible. Then,
\begin{align*}
&\sigma(P^{-1}TP)=\sigma(T)\quad\mbox{and}\quad\sigma_p(P^{-1}TP)=\sigma_p(T);\\
&\sigma(cT)=c\sigma(T)\quad\mbox{and}\quad\sigma_p(cT)=c\sigma_p(T)\quad\mbox{for }c\in\cn.
\end{align*}
\end{proposition}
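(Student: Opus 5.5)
The plan is to prove the four identities directly from the definitions of spectrum and point spectrum, using only the elementary algebra of $BL(\mathcal{B})$. For the similarity statements, fix $\lambda\in\cn$ and observe the identity
\[
P^{-1}TP-\lambda I=P^{-1}(T-\lambda I)P,
\]
which holds because $P^{-1}(\lambda I)P=\lambda I$.

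\textbf{Spectrum under similarity.} Suppose $T-\lambda I$ is invertible in $BL(\mathcal{B})$. Then $P^{-1}(T-\lambda I)^{-1}P$ is a bounded two-sided inverse of $P^{-1}TP-\lambda I$. Conversely, suppose $P^{-1}TP-\lambda I$ is invertible. Then $P(P^{-1}TP-\lambda I)^{-1}P^{-1}$ is a bounded inverse of $T-\lambda I$. Hence $\lambda\notin\sigma(T)$ if and only if $\lambda\notin\sigma(P^{-1}TP)$.

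\textbf{Point spectrum under similarity.} If $T\bx=\lambda\bx$ with $\bx\neq\ze$, set $\by=P^{-1}\bx$. Then $\by\neq\ze$ because $P^{-1}$ is injective, and
\[
P^{-1}TP\by=P^{-1}T\bx=\lambda\by.
\]
The reverse inclusion follows by symmetry, either by writing $T=P(P^{-1}TP)P^{-1}$ or by mapping eigenvectors back with $\bx=P\by$.

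\textbf{Scalar multiples, case $c\neq0$.} Write
\[
cT-\mu I=c\,(T-\tfrac{\mu}{c}I).
\]
Since $c$ is a nonzero scalar, $cT-\mu I$ is invertible if and only if $T-\frac{\mu}{c}I$ is. Likewise, $\ker(cT-\mu I)=\ker(T-\frac{\mu}{c}I)$. So $\mu\in\sigma(cT)$ if and only if $\mu/c\in\sigma(T)$, that is, $\sigma(cT)=c\sigma(T)$. The same argument gives $\sigma_p(cT)=c\sigma_p(T)$.

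\textbf{Scalar multiples, case $c=0$.} This is the only place where care is needed. Here $cT=0$, so $\sigma(0)=\{0\}$, and $\sigma_p(0)=\{0\}$ provided $\mathcal{B}\neq\{\ze\}$. On the other hand, $0\cdot\sigma(T)=\{0\}$ because $\sigma(T)\neq\emptyset$; this is the standard nonemptiness of the spectrum on a nonzero complex Banach space. However, $0\cdot\sigma_p(T)$ may be empty when $\sigma_p(T)=\emptyset$, for instance for the forward shift $S$. The point-spectrum identity therefore fails as stated when $c=0$. I would note this explicitly and restrict that identity to $c\neq0$, which covers every intended use, namely $c\in\mathbb{T}$.

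The main obstacle is only this degenerate case $c=0$. Everything else is routine verification.
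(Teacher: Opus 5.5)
Your proof is correct. The paper states this proposition without proof, as a standard fact. Your direct verification via \(P^{-1}TP-\lambda I=P^{-1}(T-\lambda I)P\) and \(cT-\mu I=c\,(T-\tfrac{\mu}{c}I)\) is the routine argument one would supply.

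Your caveat about \(c=0\) is a genuine correction to the statement as written. On a nonzero space with \(\sigma_p(T)=\emptyset\) one has \(\sigma_p(0\cdot T)=\{0\}\neq\emptyset=0\cdot\sigma_p(T)\). An example is the forward shift on \(\ell^2(\mathbb{Z}^+)\), which the paper itself uses in Example \ref{nochar}. By contrast, \(\sigma(0\cdot T)=\{0\}=0\cdot\sigma(T)\) does hold, by nonemptiness of the spectrum. Any such counterexample must be infinite-dimensional, since in finite dimension \(\sigma_p(T)=\sigma(T)\neq\emptyset\).

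One small correction to your justification: the later uses are not confined to \(c\in\mathbb{T}\).
\begin{itemize}
\item Proposition \ref{super_ct} invokes \(\sigma(cT)=c\,\sigma(T)\) for arbitrary \(c\in\cn\). That is fine, because you proved this identity for every \(c\), including \(c=0\).
\item The point-spectrum identity is invoked only for \(c\in\tau_p(T)\), i.e.\ \(|c\lambda|=1\) for some eigenvalue \(\lambda\). This forces \(c\neq0\) but not \(|c|=1\).
\end{itemize}
So restricting the point-spectrum identity to \(c\neq0\) indeed costs nothing, but for that reason rather than because the intended \(c\) lie on \(\mathbb{T}\).
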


For a complex Banach space \(\mathcal{B}\) and \(T\in BL(\mathcal{B)}\), define
\begin{align*}
& \tau(T)=\{t\in\cn : |t\lambda|=1\mbox{ for some }\lambda\in\sigma(T)\}\\
\mbox{and}\quad & \tau_p(T)=\{t\in\cn : |t\lambda|=1\mbox{ for some }\lambda\in\sigma_p(T)\}.
\end{align*}

\begin{proposition}\label{super_ct}
Let \(\mathcal{B}\) be a complex Banach space, \(T\in BL(\mathcal{B})\), and \(c\in\cn\). Then, \(cT\) is super asymptotically expansive if and only if \(c\notin\tau(T)\).
\end{proposition}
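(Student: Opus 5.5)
The plan is to translate the condition $c\notin\tau(T)$ into a spectral condition on $cT$, and then use the equivalence between super asymptotic expansivity and hyperbolicity. By Proposition \ref{spectrum}, $\sigma(cT)=c\,\sigma(T)$. Hence $c\in\tau(T)$ holds exactly when $|c\lambda|=1$ for some $\lambda\in\sigma(T)$, that is, exactly when $\sigma(cT)\cap\mathbb{T}\neq\emptyset$. So the statement is equivalent to:
\[cT \mbox{ is super asymptotically expansive} \iff cT \mbox{ is hyperbolic}.\]
The degenerate case $c=0$ is consistent: $\tau(T)$ never contains $0$, and $0\cdot T=0$ is both hyperbolic and trivially super asymptotically expansive, since $\|0^n(\bz)\|=0<\varepsilon$ for $n\geq 1$.

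For the direction $c\notin\tau(T)\Rightarrow cT$ super asymptotically expansive, I would simply apply Corollary \ref{hyperbolictosuper} (via Theorem \ref{hyptosup}) to the hyperbolic operator $cT$. This direction holds on any complex Banach space.

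For the converse, assume $cT$ is super asymptotically expansive and suppose, for contradiction, that some $\mu\in\sigma(cT)\cap\mathbb{T}$ exists. First I would treat the case $\mu\in\sigma_{ap}(cT)$. Fix $\varepsilon=\frac12$ and $M=2$, and let $N=N(\varepsilon,M)$ be given by the definition. Choose unit vectors $\bx_k$ with $\|(cT-\mu I)\bx_k\|\to 0$. The telescoping identity $(cT)^N-\mu^N I=\sum_{j=0}^{N-1}\mu^{N-1-j}(cT)^j(cT-\mu I)$ gives $\|(cT)^N\bx_k-\mu^N\bx_k\|\to0$, so $\|(cT)^N\bx_k\|\to |\mu|^N=1$. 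For large $k$ this value lies in $[\frac12,2]$, contradicting the dichotomy at $n=N$.

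The main obstacle is the remaining case, where $\sigma(cT)\cap\mathbb{T}$ contains no approximate eigenvalue. Then $\mu$ lies in the compression spectrum, so $cT-\mu I$ is bounded below but not onto, and super asymptotic expansivity does not obviously pass to the adjoint. My first attempt would be a boundary argument. Since $\partial\sigma(cT)\subseteq\sigma_{ap}(cT)$, if $\sigma(cT)$ meets $\mathbb{T}$ without containing an approximate eigenvalue on $\mathbb{T}$, then $\mathbb{T}$ lies in the interior of $\sigma(cT)$. One would then try to rule this out by combining the dichotomy with the behaviour of $\mathcal{E}=\{\bx:\|(cT)^n\bx\|\to0\}$, which is closed and invariant as in the proof of Theorem \ref{infinitesuptohyp}, and with the exponential rates that super asymptotic expansivity forces. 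If this fails, I would fall back on the setting where the paper already has the converse: when $\mathcal{B}$ is a Hilbert space and $cT$ is surjective, Theorem \ref{superequivalenthyperbolic} applied to $cT$ finishes the proof directly. I would state the result under those standing hypotheses if the general Banach case cannot be closed.
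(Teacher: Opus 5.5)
The overall route is the paper's own, and so is the gap: the statement is false as written, and no proof along these lines can be completed. You use $\sigma(cT)=c\,\sigma(T)$ to turn $c\notin\tau(T)$ into hyperbolicity of $cT$. Everything then rests on the equivalence ``super asymptotically expansive $\iff$ hyperbolic'' for $cT$. The paper's proof cites Proposition~\ref{cT} at that point, but that result only treats $|c|=1$ and does not supply the equivalence. The equivalence fails in both directions.

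\textbf{The converse: your residual case is a counterexample, not a technical obstacle.} Take $T=S$, the unilateral shift on $\ell^2(\mathbb{Z}^+)$, and $c=2$; then $cT$ is the operator of Example~\ref{surjnecessary}. Every unit vector satisfies $\|(2S)^n\bz\|=2^n$, so $2S$ is super asymptotically expansive. Yet $\sigma(2S)=2\overline{\mathbb{D}}$ has $\mathbb{T}$ in its interior, and $2\in\tau(S)=\{t:|t|\geq 1\}$. For $|\mu|=1$, the operator $2S-\mu$ is bounded below but not onto, which is exactly the case you left open. So no boundary or invariant-subspace argument can rule it out. Your approximate-point-spectrum argument is correct, and it is precisely the part of the converse that survives.

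\textbf{The forward direction, which you called immediate, is also false.} Theorem~\ref{hyptosup} only controls vectors lying in one spectral subspace. A vector $\bx_s+\eta\bx_u$ with small $\eta\neq 0$ first contracts and then expands, passing through norms of order one at arbitrarily late times. So no $N$ uniform over the unit sphere can exist. Concretely, let $T=\mathrm{diag}(\tfrac12,2)$ on $\mathbb{C}^2$ and $c=1$; then $c\notin\tau(T)=\{t:|t|=\tfrac12\}\cup\{t:|t|=2\}$. For every $n$, the unit vector $\bz_n=(4^n+1)^{-1/2}(2^n,1)$ satisfies $\|T^n\bz_n\|=1$. With $\varepsilon=\tfrac12$ and $M=2$ no $N$ works, so $T$ is hyperbolic but not super asymptotically expansive. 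This $T$ is a surjective operator on a Hilbert space, so your fallback to Theorem~\ref{superequivalenthyperbolic} does not rescue the statement. That theorem and Corollary~\ref{hyperbolictosuper} fail for the same reason.

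\textbf{What is actually true.} The unit sphere of a nonzero complex space is connected. So for $n\geq N$ the interval $\{\|A^n\bz\|:\|\bz\|=1\}$ must lie entirely in $[0,\varepsilon)$ or entirely in $(M,\infty)$. A short induction using $\|A^{n+1}\|\leq\|A\|\,\|A^n\|$ then shows that $A$ is super asymptotically expansive iff $\|A^n\|\to 0$ or $\inf_{\|\bz\|=1}\|A^n\bz\|\to\infty$. For $A=cT$, apply the spectral radius formula and its lower-bound analogue $\lim_n(\inf_{\|\bz\|=1}\|A^n\bz\|)^{1/n}=\min\{|\lambda|:\lambda\in\sigma_{ap}(A)\}$. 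The criterion becomes $|c|\,r(T)<1$ or $|c|\min\{|\lambda|:\lambda\in\sigma_{ap}(T)\}>1$, which is genuinely different from $c\notin\tau(T)$.
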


\begin{proof}
By Proposition \ref{spectrum}, \(\sigma(cT)=c\sigma(T)\). Therefore, \(\sigma(cT)\cap\mathbb{T}=\emptyset\) iff
\[|c\lambda|\neq 1\quad\mbox{ for all }\lambda\in\sigma(T)\]
if and only if \(c\notin\tau(T)\). Hence, by Proposition \ref{cT}, \(cT\) is super asymptotically expansive if and only if \(c\notin\tau(T)\).  
\end{proof}

\begin{remark}
In the above proposition, it is interesting to note that \(T\) itself need not be super asymptotically expansive. For example, consider the map
\[T:\cn^2\to\cn^2\quad\mbox{given by}\quad T(x_1,x_2)=\left(x_1, 2x_2\right).\]
Clearly, \(\sigma(T)=\left\{1,2\right\}\). Therefore, \(T\) is not super asymptotically expansive, but \(\frac{1}{4}T\) is.

This result might be very useful to construct fimilies of examples of super asymptotically expansive operators.

One may also note that super asymptotic expansivity of \(T\) does not guarantee the same of \(cT\) for every \(c\in\cn\).
\end{remark}
 
\begin{corollary}
\label{partialResult} 
Let \(\mathcal{B}\) be a complex Banach space and \(T\in BL(\mathcal{B})\). If \(c\notin \tau(T)\), then 
\begin{itemize}
\item [(a)] \(cT\) is strong asymptotically expansive, 
\item [(b)] \(cT\) asymptotically expansive.   
\end{itemize}
\end{corollary}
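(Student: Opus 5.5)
The plan is to pass through hyperbolicity of $cT$ and then descend the chain of implications already recorded in Corollary \ref{hyperbolictosuper}. First, I will show that $c\notin\tau(T)$ forces $cT$ to be hyperbolic. By Proposition \ref{spectrum}, $\sigma(cT)=c\sigma(T)$. So a point of $\sigma(cT)\cap\mathbb{T}$ would have the form $c\lambda$ with $\lambda\in\sigma(T)$ and $|c\lambda|=1$, which says exactly that $c\in\tau(T)$. Hence $c\notin\tau(T)$ gives $\sigma(cT)\cap\mathbb{T}=\emptyset$, i.e.\ $cT$ is hyperbolic. This is the same computation as in the proof of Proposition \ref{super_ct}.

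Next, I will apply Corollary \ref{hyperbolictosuper} to the operator $cT\in BL(\mathcal{B})$ on the complex Banach space $\mathcal{B}$. It gives that $cT$ is super asymptotically expansive, hence strong asymptotically expansive, hence asymptotically expansive. That proves (a) and (b).

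For the reader checking the chain, the last two implications are elementary. Super asymptotic expansivity with constants $(\varepsilon\delta^{-1},M)$, applied to normalized vectors, yields strong asymptotic expansivity via Proposition \ref{strongly_assymptotic}. Strong asymptotic expansivity trivially implies asymptotic expansivity. The only substantive ingredient is Theorem \ref{hyptosup}, which rests on the Riesz decomposition of Proposition \ref{RDT}; it is already established, so there is no real obstacle.

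One remark, for clarity rather than as a gap: the route through hyperbolicity only uses the direction ``hyperbolic $\Rightarrow$ super asymptotically expansive''. It therefore needs neither surjectivity nor a Hilbert space structure. The argument does not invoke the ``only if'' part of Proposition \ref{super_ct}, which would need Theorem \ref{superequivalenthyperbolic} and its surjectivity hypothesis.
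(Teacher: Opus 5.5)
Your first step is correct: $\sigma(cT)=c\,\sigma(T)$, so $c\notin\tau(T)$ gives $\sigma(cT)\cap\mathbb{T}=\emptyset$. From there you follow the paper's own route, which reads the corollary off Proposition \ref{super_ct} and Corollary \ref{hyperbolictosuper}. The gap is the link you call ``the only substantive ingredient'', namely hyperbolic $\Rightarrow$ super asymptotically expansive (Theorem \ref{hyptosup}). It is false, so your chain, and the paper's argument, breaks at its first step. The proof of Theorem \ref{hyptosup} establishes the dichotomy only on each summand $\mathcal{B}_1$, $\mathcal{B}_2$ separately, never for vectors with nonzero components in both. Concretely, $T(x_1,x_2)=(\tfrac{1}{2}x_1,2x_2)$ on $\mathbb{C}^2$ with the Euclidean norm is hyperbolic. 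Yet for $\varepsilon=\tfrac{1}{2}$, $M=2$ and any $N$, the unit vector $\bz_N=(1,2^{-N})/\norm{(1,2^{-N})}$ satisfies $\norm{T^N(\bz_N)}=1$, so no $N(\varepsilon,M)$ exists. More generally, the unit sphere of a complex space is connected. So for $M\ge\varepsilon$ and each $n\ge N$, one of the two disjoint open sets $\{\bz:\norm{T^n\bz}<\varepsilon\}$ and $\{\bz:\norm{T^n\bz}>M\}$ of $\mathcal{S}_{\mathcal{B}}$ would have to be empty. That is impossible for large $n$ once $T$ has both a nontrivial stable part and a nontrivial unstable part.

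The corollary itself is still true; you just have to go from hyperbolicity directly to strong asymptotic expansivity, bypassing super asymptotic expansivity. Put $S=cT$ and let $\mathcal{B}=\mathcal{B}_s\oplus\mathcal{B}_u$ be the splitting of Proposition \ref{RDT} for $\sigma(S)\cap\mathbb{D}$ and $\sigma(S)\setminus\overline{\mathbb{D}}$. The projection $Q$ onto $\mathcal{B}_u$ along $\mathcal{B}_s$ is bounded, since both summands are closed. It commutes with $S$, since both summands are invariant. The spectral radius formula, as in Propositions \ref{spec1} and \ref{spec2}, gives $C\ge 1$ and $0<a<1<b$ with $\norm{S^n\bx}\le Ca^n\norm{\bx}$ on $\mathcal{B}_s$ and $\norm{S^n\by}\ge C^{-1}b^n\norm{\by}$ on $\mathcal{B}_u$, for all $n\in\mathbb{N}$. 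Fix any $\delta>0$ and suppose $\norm{S^n\bz}\le\delta$ for all $n$. Then $C^{-1}b^n\norm{Q\bz}\le\norm{S^nQ\bz}=\norm{QS^n\bz}\le\norm{Q}\delta$ for all $n$, which forces $Q\bz=\ze$. Hence $\bz\in\mathcal{B}_s$ with $\norm{\bz}\le\delta$ (take $n=0$), and $\norm{S^n\bz}\le C\delta a^n\le\varepsilon$ for all $n\ge N(\varepsilon)$, where $N(\varepsilon)$ does not depend on $\bz$. This gives (a), and (b) follows trivially. Like your sketch, this argument needs neither surjectivity nor a Hilbert space structure.
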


\begin{proposition}
Let \(\mathcal{B}\) be a complex Banach space and \(T\in BL(\mathcal{B})\). If \(c\in\tau_p(T)\), then \(cT\) cannot be asymptotically expansive, and hence, cannot be strong asymptotically expansive.
\end{proposition}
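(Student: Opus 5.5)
If $c\in\tau_p(T)$, then $cT$ has a point-spectrum value on the unit circle. The plan is to exhibit an eigenvector of $cT$ whose orbit has constant norm, and then invoke the characterization in Proposition \ref{char_asymptotic}. No finite-dimensionality is needed, so this runs exactly like the proof of Theorem \ref{converse_ps}.

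First, by definition of $\tau_p(T)$, there is $\lambda\in\sigma_p(T)$ with $|c\lambda|=1$. By Proposition \ref{spectrum}, $\sigma_p(cT)=c\,\sigma_p(T)$, so $\mu:=c\lambda$ is an eigenvalue of $cT$ with $|\mu|=1$. One can also see this directly. Take $\bx\neq\ze$ with $T(\bx)=\lambda\bx$; then $(cT)(\bx)=c\lambda\bx=\mu\bx$. Note that $c\neq 0$ automatically, since $|c\lambda|=1$.

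Second, normalize $\bx$ so that $\bx\in\mathcal{S}_{\mathcal{B}}$. Induction gives $(cT)^n(\bx)=\mu^n\bx$, hence $\norm{(cT)^n(\bx)}=|\mu|^n\norm{\bx}=1$ for all $n\in\mathbb{N}$. The orbit of $\bx$ is therefore bounded, so $\sup_n\norm{(cT)^n(\bx)}\neq\infty$. Its norms are identically $1$, so they do not tend to $0$.

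Third, this unit vector violates both alternatives of Proposition \ref{char_asymptotic}, so $cT$ is not asymptotically expansive. Strong asymptotic expansivity implies asymptotic expansivity, as noted after Proposition \ref{strongly_assymptotic}. Hence $cT$ cannot be strong asymptotically expansive either, and the same holds for super asymptotic expansivity by Corollary \ref{hyperbolictosuper}'s chain of implications.

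There is no real obstacle here. The only point needing care is to use the \emph{point} spectrum rather than the full spectrum, so that a genuine eigenvector exists. The analogous statement with $\tau(T)$ in place of $\tau_p(T)$ fails in infinite dimensions, as Example \ref{nochar} shows.
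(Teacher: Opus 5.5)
Your proof is correct and follows the paper's route: the paper also uses $\sigma_p(cT)=c\,\sigma_p(T)$ to obtain a unimodular eigenvalue of $cT$ and then cites Theorem \ref{converse_ps}; by writing out the eigenvector argument directly you avoid a small mismatch, since Theorem \ref{converse_ps} is stated only for finite-dimensional spaces (although its proof never uses finite dimensionality). For your extra remark on super asymptotic expansivity, it is cleaner to argue directly than to rely on Corollary \ref{hyperbolictosuper}, which the paper states without proof: the unit eigenvector has $\norm{(cT)^n(\bx)}=1$ for all $n$, so no $N$ works for $\varepsilon=\tfrac12$, $M=2$.
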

\begin{proof}
By Proposition \ref{spectrum}, \(\sigma_p(cT)=c\sigma_p(T)\). Since \(c\in\tau_p(T)\),
\[|c\lambda|=1\quad\mbox{ for some }\lambda\in\sigma_p(T).\]
Therefore, \(\sigma_p(cT)\cap\mathbb{T}\neq\emptyset\). Hence, by Theorem \ref{converse_ps}, \(cT\) cannot be asymptotically expansive.
\end{proof}

The following proposition is an adaptation of \cite[Lemma 3.14]{BDOP} for strong asymptotic expansivity. One can adopt similar argument for asymptotic expansivity and super asymptotic expansivity. 

\begin{proposition}
Let $\mathcal{X}$ and $\mathcal{Y}$ be normed spaces over \(\mathbb{K}\), and let $L:\mathcal{X}\to\mathcal{Y}$ be an invertible bounded linear map. Let
$T\in BL(\mathcal{X})$ and define $S=LTL^{-1}\in BL(\mathcal{Y})$. Then, $T$ is strong asymptotically expansive if and only if $S$ is strong asymptotically
expansive. 
\end{proposition}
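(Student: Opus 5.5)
By symmetry (since $T=L^{-1}SL$ with $L^{-1}:\mathcal{Y}\to\mathcal{X}$ also an invertible bounded linear map), it suffices to prove one direction: if $T$ is strong asymptotically expansive, then so is $S$. The converse then follows by applying the same argument with the roles of $(T,L)$ and $(S,L^{-1})$ exchanged. Throughout, "invertible bounded" means that $L^{-1}$ is also bounded, so both $\norm{L}$ and $\norm{L^{-1}}$ are finite and positive.

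The key identity is $S^n=LT^nL^{-1}$ for all $n\in\mathbb{N}$, which follows by induction on $n$. Suppose $\delta>0$ is a strong asymptotic expansivity constant for $T$. I will show that
\[\delta'=\frac{\delta}{\norm{L^{-1}}}\]
is one for $S$. Let $\varepsilon>0$ be given. Set $\varepsilon_0=\varepsilon/\norm{L}$, and let $N=N(\varepsilon_0)$ be the integer furnished by the strong asymptotic expansivity of $T$ at the level $\varepsilon_0$.

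Now take $\by\in\mathcal{Y}$ with $\norm{S^n(\by)}\le\delta'$ for all $n\in\mathbb{N}$, and put $\bz=L^{-1}(\by)$. Then for every $n$,
\[\norm{T^n(\bz)}=\norm{L^{-1}S^n(\by)}\le\norm{L^{-1}}\,\delta'=\delta.\]
Hence $\norm{T^n(\bz)}\le\varepsilon_0$ for all $n\ge N$. Consequently, for $n\ge N$,
\[\norm{S^n(\by)}=\norm{LT^n(\bz)}\le\norm{L}\,\varepsilon_0=\varepsilon.\]
The integer $N$ depends only on $\varepsilon$ (through $\varepsilon_0$), not on $\by$, which is exactly the uniformity the definition demands.

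There is no real obstacle here. The only point needing care is that the choice of $N$ must remain independent of the vector, and the constants must be rescaled by $\norm{L}$ and $\norm{L^{-1}}$ in the correct places. One could alternatively argue through Proposition \ref{strongly_assymptotic}, using that $L$ maps bounded orbits to bounded orbits. However, the direct definition-based argument above is cleaner and avoids tracking the orbit-dependent constant $M$.
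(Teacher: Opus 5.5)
Your proof is correct, and it takes a more direct route than the paper.

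The paper does not argue from the definition. It uses the sphere characterization of Proposition~\ref{strongly_assymptotic}: it normalizes $\by\in\mathcal{S}_{\mathcal{Y}}$ to $\bx=L^{-1}(\by)/\norm{L^{-1}(\by)}$ and writes $\norm{S^n(\by)}=\norm{L^{-1}(\by)}\,\norm{L(T^n(\bx))}$. It then splits into two cases, keeping the same constant $\delta$ for $S$:
\begin{itemize}
\item unbounded orbit, using $\norm{S^n(\by)}\ge\frac{1}{\alpha\beta}\norm{T^n(\bx)}$;
\item bounded orbit, using $\norm{S^n(\by)}\le\alpha\beta\norm{T^n(\bx)}$ with the new orbit bound $M=\alpha\beta M_1$.
\end{itemize}

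You instead check the definition directly, with the rescaled constant $\delta'=\delta/\norm{L^{-1}}$ and tolerance $\varepsilon/\norm{L}$. This has three advantages:
\begin{itemize}
\item the unbounded-orbit case never arises;
\item no orbit-dependent bound $M$ appears;
\item the independence of $N$ from the vector is visible at once.
\end{itemize}
In the paper that uniformity is only implicit. In condition (ii) of Proposition~\ref{strongly_assymptotic}, $N(\varepsilon)$ is quantified after the vector, so it must be read as uniform over the unit sphere for the converse of that proposition to hold. Your explicit rescaling of $\delta$ is also cleaner than the paper's reuse of the same $\delta$ for $S$. That reuse is harmless only because, by homogeneity, any $\delta>0$ works as a constant for a linear operator.

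What the paper's route buys is the two-sided comparison $\frac{1}{\alpha\beta}\norm{T^n(\bx)}\le\norm{S^n(\by)}\le\alpha\beta\norm{T^n(\bx)}$ on unit vectors. That is what is needed to transfer super asymptotic expansivity, which is defined on the unit sphere with both a lower and an upper threshold. Your one-sided argument fits the strong and ordinary variants, but would need that normalization step for the super variant.
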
 

\begin{proof}
Assume that $T$ is strong asymptotically expansive. Let $\varepsilon>0$ be given. Let us denote
\(\alpha=\norm{L}\) and \(\beta=\norm{L^{-1}}\).
Then, we have the following:
\begin{equation}\label{relation_norms}
\frac{1}{\beta}\norm{\bx}\leq\|L(\bx)\|\leq \alpha\|\bx\|
\quad\mbox{and}\quad
\frac{1}{\alpha}\norm{\by}\leq\|L^{-1}(\by)\|\leq \beta\|\by\|
\end{equation}
for all $\bx\in\mathcal{X}$ and $\by\in\mathcal{Y}$.

Let $\by\in\mathcal{S}_{\mathcal{Y}}$ and set
\[
\bx=\frac{L^{-1}(\by)}{\|L^{-1}(\by)\|}\in\mathcal{S}_{\mathcal{X}}.
\]
Then,
\(
L^{-1}(\by)=\|L^{-1}(\by)\|\bx.
\)
Since $T$ is linear, for every $n\in\mathbb{N}$,
\begin{equation}\label{relation_ST}
\norm{S^n(\by)}
=\norm{L(T^n(L^{-1}(\by)))}
=\|L^{-1}(\by)\|\norm{L(T^n(\bx))}.
\end{equation}

Now, by the strong asymptotic expansivity of $T$, we have for all $\bx\in\mathcal{S}_\mathcal{X}$, 
\\
either (i) $\sup\limits_{n\in\mathbb{N}} \norm{T^n(\bx)}=\infty$,\\
or (ii) for every \(\varepsilon>0\) there exists \(N=N(\varepsilon)\in\mathbb{Z}^+\) such that $\norm{T^n(\bx)}\leq \frac{M_1}{\delta}\varepsilon$ for all \(n\geq N\), where $\|T^n (\bx)\| \leq M_1$ for all \(n\in\mathbb{N}\).\\

Consider first the case when $\sup\limits_{n\in\mathbb{N}} \norm{T^n(\bx)}=\infty$. Using \(\norm{\by}=1\) and \eqref{relation_norms}, we obtain from \eqref{relation_ST},
\[
\|S^n(\by)\|
\geq \frac{1}{\alpha\beta}\|T^n(\bx)\| \mbox{ and hence, } \sup\limits_{n\in\mathbb{N}} \norm{S^n(\by)}=\infty.
\]

Consider next the case when $\norm{T^n(\bx)}\leq \frac{M_1}{\delta}\varepsilon$. Using \(\norm{\by}=1\) and \eqref{relation_norms}, we obtain from \eqref{relation_ST},
\[
\|S^n(\by)\|
\leq \alpha\beta\norm{T^n(\bx)}.
\]
Then, \(M=\alpha\beta M_1\) is an upper bound of \((\|S^n(\by)\|)_{n\in\nn}\), and by the given inequality,
\[
\|S^n(\by)\|\leq\alpha\beta\frac{M_1}{\delta}\varepsilon=\frac{M}{\delta}\varepsilon.
\]
This shows that for all $\by\in\mathcal{S}_\mathcal{Y}$,\\
either (i) $\sup\limits_{n\in\mathbb{N}} \norm{S^n(\by)}=\infty$,\\
or (ii) for every \(\varepsilon>0\) there exists \(N=N(\varepsilon)\in\mathbb{Z}^+\) such that $\norm{S^n(\by)}\leq \frac{M}{\delta}\varepsilon$ for all \(n\geq N\), where $\|S^n (\bx)\| \leq M$ for all \(n\in\mathbb{N}\).

Hence, $S$ is strong asymptotically expansive by Proposition \ref{strongly_assymptotic}. Since $L:\mathcal{X}\rightarrow\mathcal{Y}$ is invertible, 
the converse follows similarly.  
\end{proof}

\begin{theorem}
Let \(\hs\) be a Hilbert space, \(T\in BL(\hs)\), and \(T\) be normal. Then, the following hold:
\begin{itemize}
\item [(a)] $T$ is asymptotically expansive if and only if $T^*$ is so. \item [(b)] $T$ is strong asymptotically expansive if and only if $T^*$ is so. 
\item [(c)] $T$ is super asymptotically expansive if and only if $T^*$ is so. 
\end{itemize}
\end{theorem}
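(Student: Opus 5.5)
The plan is to show that for a normal operator, $T$ and $T^*$ have identical orbit norms. That is, $\norm{T^n(\bx)}=\norm{(T^*)^n(\bx)}$ for every $\bx\in\hs$ and every $n\in\nn$. Each of the three notions is defined purely through the quantities $\norm{T^n(\bz)}$: for vectors $\bz\in\mathcal{X}$ in the asymptotic and strong asymptotic cases, and for $\bz\in\mathcal{S}_{\hs}$ in the super case. So once this identity holds, all three equivalences follow at once, exactly as in the proof of Proposition \ref{cT}, where $\norm{(cT)^n(\bx)}=\norm{T^n(\bx)}$ did the whole job.

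The key steps are as follows.

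First, recall the standard fact that a normal operator $A$ satisfies $\norm{A\bx}=\norm{A^*\bx}$ for all $\bx$. This follows from
\[\norm{A\bx}^2=\langle A^*A\bx,\bx\rangle=\langle AA^*\bx,\bx\rangle=\norm{A^*\bx}^2.\]

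Second, observe that powers of a normal operator are normal. Since $T$ commutes with $T^*$, repeated use of $TT^*=T^*T$ gives $T^n(T^*)^n=(T^*)^nT^n$, and moreover $(T^n)^*=(T^*)^n$. Hence $T^n$ is normal with adjoint $(T^*)^n$.

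Third, apply the first step to $A=T^n$. This gives $\norm{T^n(\bx)}=\norm{(T^*)^n(\bx)}$ for all $\bx$ and $n$.

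Fourth, substitute this identity into each definition:
\begin{itemize}
\item[(a)] asymptotic expansivity, with the same constant $\delta$;
\item[(b)] strong asymptotic expansivity, with the same $\delta$ and the same $N(\varepsilon)$;
\item[(c)] super asymptotic expansivity, with the same $N(\varepsilon,M)$.
\end{itemize}
In each case the defining condition for $T$ is literally the defining condition for $T^*$. The reverse implications follow by symmetry, since $T^*$ is also normal and $(T^*)^*=T$.

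There is no real obstacle here. The only point needing care is the commutation bookkeeping in the second step, namely verifying $T^n(T^*)^n=(T^*)^nT^n$ by induction from $TT^*=T^*T$. This works over both real and complex Hilbert spaces, so completeness and the scalar field play no role.
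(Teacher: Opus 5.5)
Your proposal is correct and follows the same route as the paper: normality of \(T^n\) gives \(\|T^n(\bx)\|=\|(T^*)^n(\bx)\|\) for all \(\bx\) and \(n\), and all three definitions depend only on these orbit norms, so each equivalence is immediate. You additionally spell out the two facts the paper leaves implicit, namely \(\|A\bx\|=\|A^*\bx\|\) for normal \(A\) and \((T^n)^*=(T^*)^n\).
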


\begin{proof}
Since \(T\) is normal, \(T^n\) is also so for all \(n\in\mathbb{N}\). Therefore, \(\norm{T^n(\bx)}=\norm{(T^*)^n(\bx)}\) for all \(\bx\in X\) and \(n\in\mathbb{N}\). Therefore, the implications follow from the definitions. 
\end{proof}

We now prove that asymptotic expansivity is preserved under taking positive integral power. It is easy to verify that the same result hold for strong asymptotic expansivity and super asymptotic expansivity.   

\begin{proposition}
Let \(\mathcal{X}\) be a normed space, and $T \in BL(\mathcal{X})$ be asymptotically expansive. Then $T^r$ is asymptotically expansive for every $r \in \mathbb{Z}^+$.
\end{proposition}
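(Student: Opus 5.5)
I would argue directly from the definition, taking the asymptotic expansivity constant of $T^r$ to be $\delta' = \delta/K$. Here $\delta$ is an asymptotic expansivity constant of $T$, and $K = \max\{1,\norm{T},\norm{T}^2,\ldots,\norm{T}^{r-1}\}$. The idea is that a bound on the sparse orbit $(T^{rn}\bz)_{n\in\mathbb{N}}$ can be propagated to the full orbit $(T^m\bz)_{m\in\mathbb{N}}$ using the operator norm of $T$. Then asymptotic expansivity of $T$ forces the full orbit, and in particular the subsequence $(T^{rn}\bz)$, to tend to zero.

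First, suppose $\bz\in\mathcal{X}$ satisfies $\norm{(T^r)^n(\bz)}\le \delta'$ for all $n\in\mathbb{N}$. Take an arbitrary $m\in\mathbb{N}$ and write $m = rn + j$ with $0\le j\le r-1$. Then
\[
\norm{T^m(\bz)} = \norm{T^j(T^{rn}(\bz))} \le \norm{T}^j\,\norm{T^{rn}(\bz)} \le K\delta' = \delta .
\]
So the whole $T$-orbit of $\bz$ stays within $\delta$.

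Second, the definition of asymptotic expansivity of $T$ then gives $\lim_{m\to\infty}\norm{T^m(\bz)} = 0$. Since $(\norm{T^{rn}(\bz)})_n$ is a subsequence of this sequence, $\lim_{n\to\infty}\norm{(T^r)^n(\bz)} = 0$. This is exactly asymptotic expansivity of $T^r$ with constant $\delta'$.

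There is no serious obstacle. The only point needing care is to choose $K\ge 1$, which covers the case $\norm{T}<1$ and the index $j=0$. This ensures the bound holds uniformly for all residues $j$. The same bookkeeping adapts to the strong variant: given $\varepsilon$, take $N$ from $T$ for that $\varepsilon$ and use $\lceil N/r\rceil$ for $T^r$. Alternatively, one could invoke Proposition \ref{char_asymptotic}, since for $\bx\in\mathcal{S}_{\mathcal{X}}$ boundedness of the $T^r$-orbit implies boundedness of the $T$-orbit by the same estimate. The direct argument above is cleaner.
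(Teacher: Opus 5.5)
Your proof is correct and follows essentially the same route as the paper: write $m = rn + j$ with $0 \le j \le r-1$, use $\norm{T^m(\bz)} \le \norm{T^j}\,\norm{T^{rn}(\bz)}$ to pass from the $T^r$-orbit to the full $T$-orbit, invoke asymptotic expansivity of $T$, and restrict to the subsequence $(T^{rn}(\bz))_n$. The only difference is packaging: you work directly from the definition with the explicit constant $\delta/K$, whereas the paper bounds an arbitrary bounded $T^r$-orbit by $C_r M$ and appeals to the characterization in Proposition \ref{char_asymptotic}.
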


\begin{proof}
Let $r \in \mathbb{Z}^+$ be fixed and assume that $T$ is asymptotically expansive. We show that $T^r$ is also asymptotically expansive.

Let $\bx \in \mathcal{X}$ be such that the orbit under $T^r$ is bounded, that is,
\[
\sup_{m \in \mathbb{N}} \|(T^r)^m (\bx)\|=\sup_{m \in \mathbb{N}} \|T^{rm} (\bx)\| \leq M
\]
for some $M > 0$. We first claim that the full orbit $\{T^n (\bx)\}_{n \in \mathbb{N}}$ is bounded. Indeed, for each $n \in \mathbb{N}$, write
\[
n = rm + k, \quad \text{where } m \in \mathbb{N}, \; k \in \{0,1,\dots,r-1\}.
\]
Then,
\[
T^n (\bx) = T^{rm + k} (\bx) = T^k (T^{rm} (\bx)).
\]
Taking norms and using the boundedness of $T^k$, we obtain
\[
\|T^n (\bx)\| \le \|T^k\| \, \|T^{rm} (\bx)\|.
\]
Hence,
\[\|T^n (\bx)\| \le \left(\max_{0 \le k \le r-1} \|T^k\|\right)
\left(\sup_{m \in \mathbb{N}} \|T^{rm} (\bx)\|\right).\] 
Since each $T^k$ is bounded and there are only finitely many such powers, the constant
\(
C_r := \max\limits_{0 \le k \le r-1} \|T^k\|
\)
is finite. Therefore, $\sup_{n \in \mathbb{N}} \|T^n (\bx)\| \le C_r M < \infty$,
that is, the full orbit of $\bx$ under $T$ is bounded.

Since $T$ is asymptotically expansive, it follows that
\[
\|T^n (\bx)\| \to 0 \quad \mbox{as } n \to \infty.
\]
In particular, taking the subsequence $n = rm$, we obtain
\[
\|(T^r)^m (\bx)\|=\|T^{rm} (\bx)\| \to 0\quad\mbox{as }m\to\infty.
\]
Therefore, $T^r$ is asymptotically expansive by Proposition \ref{char_asymptotic}.
\end{proof}
\medskip

\textbf{\large Open Questions}
\medskip

In view of Example \ref{nochar}, the following question remain open for now.  
\medskip

Open Question 1: Is there any subset of the spectrum which characterizes asymptotic expansivity? 
\medskip

Even though Proposition \ref{super_ct} provides a complete characterization of scalars \(c\) for which \(cT\) is super asymptotically expansive, we could not provide such a characterization for the other variants. Based on the partial result Corollary \ref{partialResult}, the following questions remain open for now. 
\medskip

Open Question 2: Find subsets $a(T)\subset s(T)\subset \tau(T)$ such that 
\begin{itemize}
\item [(a)] $cT$ is strong asymptotically expansive if and only if $c\notin s(T)$. 
\item [(b)] $cT$ is asymptotically expansive if and only if $c\notin a(T)$. 
\end{itemize} 

Open Question 3: Is the Hilbert space assumption in Theorem \ref{infinitesuptohyp} necessary?

\end{document}